\newcommand*{\abs}[1]{\ensuremath{{\lvert {#1} \rvert}}}
\DeclareSymbolFont{symbols2}{LS1}{stixfrak} {m} {n}
\DeclareMathDelimiter{\Lbrbrak}{\mathopen}{symbols2}{"22}{symbols2}{"22}
\DeclareMathDelimiter{\Rbrbrak}{\mathclose}{symbols2}{"23}{symbols2}{"23}
\newcommand{\maxP}[2]{
    \ifx\relax #2\relax
		\ensuremath{\mkern-2mu \Lbrbrak #1 \Rbrbrak \mkern-2mu}
	\else
		\ensuremath{\mkern-2mu \Lbrbrak #1 \Rbrbrak \mkern-2mu}(#2)
	\fi}
\colorlet{myGreen}{green!50!black}
\colorlet{myLightgreen}{green}
\colorlet{myRed}{red!90!black}
\definecolor{myBlue}{rgb}{0.25, 0.0, 1.0}
\definecolor{myLightBlue}{rgb}{0.39, 0.58, 0.93}
\colorlet{myViolet}{myBlue!55!myRed}
\definecolor{myOrange}{rgb}{1.0, 0.66, 0.07}
\definecolor{CornflowerBlue}{rgb}{0.39, 0.58, 0.93}
\definecolor{DarkGoldenrod}{rgb}{0.72, 0.53, 0.04}
\definecolor{BritishRacingGreen}{rgb}{0.0, 0.26, 0.15}
\definecolor{DarkMagenta}{rgb}{0.55, 0.0, 0.55}
\definecolor{AO}{rgb}{0.0, 0.5, 0.0}
\definecolor{BostonUniversityRed}{rgb}{0.8, 0.0, 0.0}
\definecolor{myRed}{rgb}{0.8, 0.0, 0.0}
\definecolor{DarkMidnightBlue}{rgb}{0.0, 0.2, 0.4}
\definecolor{DarkTangerine}{rgb}{1.0, 0.66, 0.07}
\definecolor{AppleGreen}{rgb}{0.55, 0.71, 0.0}
\definecolor{BrightUbe}{rgb}{0.82, 0.62, 0.91}
\definecolor{Amethyst}{rgb}{0.6, 0.4, 0.8}
\definecolor{DarkGray}{rgb}{0.52, 0.52, 0.51}
\definecolor{Gray}{rgb}{0.66, 0.66, 0.66}
\definecolor{BananaYellow}{rgb}{1.0, 0.88, 0.21}
\definecolor{Amber}{rgb}{1.0, 0.75, 0.0}
\definecolor{LightGray}{rgb}{0.83, 0.83, 0.83}
\definecolor{PrincetonOrange}{rgb}{1.0, 0.56, 0.0}
\definecolor{DeepCarrotOrange}{rgb}{0.91, 0.41, 0.17}
\definecolor{MidnightBlue}{rgb}{0.1, 0.1, 0.44}
\definecolor{HotMagenta}{rgb}{1.0, 0.11, 0.81}
\definecolor{Iceberg}{rgb}{0.44, 0.65, 0.82}
\definecolor{Byzantium}{rgb}{0.44, 0.16, 0.39}
\setlist[itemize]{topsep=0pt,partopsep=0pt,itemsep=0pt,parsep=0pt}
\setlist[itemize,1]{label={\small\textbullet}}
\setlist[itemize,2]{label={\tiny\textbullet}}
\setlist[itemize,3]{label=$\cdot$}
\setlist[enumerate]{topsep=0pt,partopsep=0pt,itemsep=0pt,parsep=0pt}
\setlist[enumerate,1]{label=\roman*)}
\setlist[enumerate,2]{label=\alph*)}
\setlist[enumerate,3]{label=\arabic*)}
\theoremstyle{definition}
\newtheorem{environment}{Environment}[section]
\newtheorem{lemma}[environment]{Lemma}
\crefname{lemma}{lemma}{lemmata}
\newtheorem*{lemma*}{Lemma}
\crefname{lemma*}{lemma}{lemmata}
\crefname{proposition}{proposition}{propositions}
\newtheorem{corollary}[environment]{Corollary}
\crefname{corollary}{corollary}{corollaries}
\newtheorem{theorem}[environment]{Theorem}
\crefname{theorem}{theorem}{Theorems}
\crefname{conjecture}{conjecture}{Conjectures}
\newtheorem*{hypothesis*}{Hypothesis}
\crefname{hypothesis*}{conjecture}{Conjectures}
\newtheorem{observation}[environment]{Observation}
\crefname{observation}{observation}{Observations}
\crefname{example}{example}{examples}
\crefname{remark}{remark}{remarks}
\crefname{figure}{figure}{figures}
\crefname{equation}{equation}{Equations}
\crefname{chapter}{chapter}{chapters}
\crefname{section}{section}{sections}
\crefname{algorithm}{algorithm}{algorithms}
\crefname{notation}{notation}{notations}
\crefname{question}{question}{questions}
\crefname{problem}{problem}{problem}
\crefname{claim}{claim}{claims}
\crefname{definition}{definition}{definitions}
\tikzset{
	position/.style args={#1:#2 from #3}{
		at=($(#3)+(#1:#2)$)
	}
}
\tikzset{
  v:main/.style = {draw, circle, scale=0.8, thick,fill=black,inner sep=0.7mm},
    v:maingray/.style = {draw, circle, scale=0.65, thick,color=gray,fill=gray,inner sep=0.7mm},
  v:mainempty/.style = {draw, circle, scale=0.8, thick,fill=white,inner sep=0.7mm},
  v:mainellipse/.style = {draw, ellipse, scale=0.8, thick,fill=white,inner sep=0.7mm},
    v:mainbox/.style = {draw, scale=0.8, thick,fill=white,inner sep=0.7mm},
    v:mainred/.style = {draw, circle, scale=0.65, thick,fill=red,inner sep=0.7mm},
        v:maingreen/.style = {draw, circle, scale=0.65, thick,fill=myGreen,inner sep=0.7mm},
                v:mainemptygreen/.style = {draw, circle, scale=0.65, thick,color=myGreen,fill=white,inner sep=0.7mm},
  v:mainemptygray/.style = {draw, circle, scale=0.65, thick,color=gray,fill=white,inner sep=0.7mm},
  v:tinytree/.style = {draw, circle, scale=0.03, thick,fill=black},
  v:middle/.style = {draw, circle, scale=0.3,thick,fill=Gray,color=Gray,inner sep=1mm},
  v:border/.style = {draw, circle, scale=0.75, thick,minimum size=10.5mm},
  v:mainfull/.style = {draw, circle, scale=1, thick,fill},
  v:ghost/.style = {inner sep=0pt,scale=1},
  v:marked/.style = {circle, scale=1.3, fill=DarkGoldenrod,opacity=0.4},
  v:tree/.style = {draw, circle, scale=0.45, thick,fill=black},
  >={latex},
  e:shiftedright/.style = {decoration={sl, raise=0.65pt},  decorate},
  e:shiftedleft/.style  = {decoration={sl, raise=-0.65pt}, decorate},
  e:marker/.style = {line width=8.5pt,line cap=round,opacity=0.35,color=DarkGoldenrod},
  e:colored/.style = {line width=1.8pt,color=BostonUniversityRed,cap=round,opacity=0.8},
  e:coloredthin/.style = {line width=1.6pt,opacity=1},
  e:coloredborder/.style = {line width=3.4pt},
  e:main/.style = {line width=1pt},
  e:thick/.style = {line width=2pt},
  e:mainplus/.style = {line width=1.15pt},
  e:mainthin/.style = {line width=0.6pt},
  e:extra/.style = {line width=1.3pt,color=LavenderGray},
  e:matching1/.style = {line width=2.2pt, color=myGreen, cap=round},
  e:matching2/.style = {line width=1.7pt, color=myRed, dashed, cap=round},
  e:matching2border/.style = {line width=0.35pt, color=white, dashed, cap=round, double=myRed, double distance=1.7pt},
  e:positive/.style = {line width=1.35pt},
  e:negative/.style = {line width=1.35pt,densely dotted},
}
\newcommand*{\N}{\mathds{N}}
\newcommand*{\Brace}[1]{\left( #1 \right)}
\newcommand*{\Fkt}[2]{
	\ifx\relax #2\relax
		#1
	\else
        #1(#2)
	\fi}
\DeclarePairedDelimiterX\Set[1]\{\}{%

#1
}
\newcommand*{\V}[1]{\Fkt{V}{#1}}
\newcommand*{\linegraph}[1]{\Fkt{\mathrm{I}}{#1}}
\renewcommand*{\deg}[1]{\Fkt{{\mathsf{deg}}}{#1}}
\renewcommand*{\log}{\mathsf{log}}
\newcommand{\InducedSubgraph}{\ensuremath{\subseteq_{\textnormal{i}}}}
\newcommand{\InducedMinor}{\ensuremath{\preccurlyeq_{\textnormal{i}}}}
\newcommand*{\rankwidth}[1]{\Fkt{{\mathsf{rw}}}{#1}}
\newcommand*{\tw}[1]{\Fkt{\mathsf{tw}}{#1}}
\newcommand*{\cdeg}[1]{\Fkt{{\mathsf{cdeg}}}{#1}}
\newcommand*{\cideg}[1]{\Fkt{{\Delta_{\mathcal{K}}}}{#1}}
\newcommand*{\bigPG}[3]{\Fkt{\mathsf{big}_{#1}^{#2}}{#3}} 
\newcommand*{\cutrank}[2]{\Fkt{\mathsf{cutrk}_{#1}}{#2}}
\newcommand*{\localcutrank}[2]{\Fkt{\mathsf{lcutrk}_{#1}}{#2}}
\newcommand*{\AM}[1]{\Fkt{\mathsf{Adj}}{#1}}
\newcommand*{\dn}[2]{\Fkt{\mathsf{dn}_{#1}}{#2}}
\newcommand*{\reduced}[2]{\Fkt{\widetilde{#1}}{#2}}
\newcommand*{\local}[2]{\Fkt{{#1}^{\mathsf{loc}}}{#2}}
\newcommand*{\ptw}[2]{\Fkt{{#1}\textnormal{-}\tw{}}{#2}}
\newcommand*{\ETierGraphs}[1]{\mathfrak{H}_d}
\newlength{\mylen}
\newcommand{\Star}{\mathscr{S}}
\newcommand{\matchingII}[0]{\ensuremath{\mathscr{M}^{\resizebox{6pt}{!}{$\Leftcircle\mkern-12mu\Rightcircle$}}}}
\newcommand{\matchingKI}[0]{\ensuremath{\mathscr{M}^{\resizebox{6pt}{!}{$\LEFTCIRCLE\mkern-12mu\Rightcircle$}}}}
\newcommand{\matchingKK}[0]{\ensuremath{\mathscr{M}^{\resizebox{6pt}{!}{$\LEFTCIRCLE\mkern-12mu\RIGHTCIRCLE$}}}}
\newcommand{\antimatchingII}[0]{\ensuremath{\mathscr{A}^{\resizebox{6pt}{!}{$\Leftcircle\mkern-12mu\Rightcircle$}}}}
\newcommand{\antimatchingKK}[0]{\ensuremath{\mathscr{A}^{\resizebox{6pt}{!}{$\LEFTCIRCLE\mkern-12mu\RIGHTCIRCLE$}}}}
\newcommand{\antimatchingKI}[0]{\ensuremath{\mathscr{A}^{\resizebox{6pt}{!}{$\LEFTCIRCLE\mkern-12mu\Rightcircle$}}}}
\newcommand{\halfgraphII}[0]{\ensuremath{\mathscr{H}^{\resizebox{6pt}{!}{$\Leftcircle\mkern-12mu\Rightcircle$}}}}
\newcommand{\halfgraphKK}[0]{\ensuremath{\mathscr{H}^{\resizebox{6pt}{!}{$\LEFTCIRCLE\mkern-12mu\RIGHTCIRCLE$}}}}
\newcommand{\halfgraphKI}[0]{\ensuremath{\mathscr{H}^{\resizebox{6pt}{!}{$\LEFTCIRCLE\mkern-12mu\Rightcircle$}}}}
\title{On graphs with a simple structure of maximal cliques}
\date{}
\DeclareRobustCommand{\authorthing}{
	\begin{center}

        J.~Pascal Gollin\thanks{J.~Pascal Gollin was supported in part by the Institute for Basic Science (IBS-R029-Y3) and in part by the Slovenian Research and Innovation Agency (research project N1-0370).}\\ 
		{\small FAMNIT, University of Primorska, Koper, Slovenia.\\
	    \href{mailto:pascalgollin@ibs.re.kr}{pascal.gollin@famnit.upr.si}}\\
        \medskip

        Meike Hatzel\thanks{Meike Hatzel's research was supported by the Institute for Basic Science (IBS-R029-C1).}\\
		{\small Discrete Mathematics Group, Institute for Basic Science (IBS), Daejeon, South Korea.\\
	    \href{mailto:research@meikehatzel.com}{research@meikehatzel.com}}\\
		\medskip
	    
		\medskip
	    Sebastian Wiederrecht\\
		{\small KAIST, School of Computing, Daejeon, South Korea.\\
		\href{wiederrecht@kaist.ac.kr}{wiederrecht@kaist.ac.kr}}
\end{center}}
\author{\authorthing}
\begin{document}
\maketitle
\thispagestyle{empty}

\begin{abstract}
We say that a hereditary graph class $\mathcal{G}$ is \emph{clique-sparse} if there is a constant $k=k(\mathcal{G})$ such that for every graph $G\in\mathcal{G}$, every vertex of $G$ belongs to at most $k$ maximal cliques, and any maximal clique of $G$ can be intersected in at most $k$ different ways by other maximal cliques.

We provide various characterisations of clique-sparse graph classes including a list of five parametric forbidden induced subgraphs.
We show that recent techniques for proving induced analogues of Menger's Theorem and the Grid Theorem of Robertson and Seymour can be lifted to prove induced variants in clique-sparse graph classes when replacing ``treewidth'' by ''tree-independence number''.
\end{abstract}

\section{Introduction}\label{sec:Intro}

At the heart of the graph minors theory of Robertson and Seymour \cite{RobertsonS1986-GM5,RobertsonS2003GM16} sits a structural distinction between those graphs that can be obtained by being ``glued together'' along small vertex sets from graphs of bounded \textsl{size} and those graphs that can be obtained in the same way from graphs of arbitrary size but with restricted structure.
Key to the distinction between these two cases is the celebrated \emph{Grid Theorem} \cite{RobertsonS1986-GM5} stating that a minor-closed graph class falls into the first category, witnessed by small \textsl{treewidth}, if and only if it excludes a planar graph.

In recent years, there has been a push to understand this kind of structural behaviour through the lens of induced graph containment relation such as \emph{induced subgraphs} (i.e.~subgraphs obtained only from the deletion of vertices) and \emph{induced minors} (i.e.~minors obtained only from the deletion of vertices and contraction of edges). 
One of the main goals in this area of study is finding useful analogues for the Grid Theorem, and more fundamentally Menger's Theorem, in the induced settings.

Both were accomplished, in their most desirable form, in the setting of graphs of bounded degree.
An induced variant for Menger's Theorem (see \cref{subsec:clique-sparseGraphs}) for graphs with constant maximum degree was independently discovered by Gartland, Korhonen, and Lokshtanov~\cite{GartlandKL2023-inducedMengerMaxDeg} as well as by Hendrey, Norin, Steiner, and Turcotte~\cite{HendreyNST2023-inducedMengerMaxDeg}.
Moreover, Albrechtsen, Huynh, Jacobs, Knappe, and Wollan~\cite{AlbrechtsenHTJKW2024-inducedMenger2paths} and independently Georgakopoulos and Papasoglu \cite{GeorgakopoulosP2023} conjectured that instead measuring the \textsl{size} of a minimum separator, a different measure, namely the \textsl{domination number} of~$G[S]$, could result in a meaningful generalisation of Menger's theorem for the induced setting.

In searching for an induced version of the Grid Theorem, large cliques are an obstacle to a verbatim analogue in the induced setting. 
Korhonen~\cite{Korhonen2023-inducedgridmaxdegree} proved that finding a ${(k \times k)}$-grid as an induced minor is possible when the treewidth is at least~$f(k,\Delta(G))$ for some function~${f(k,d) \in \mathcal{O}(k^{10} + 2^{d^5})}$. 
But as before, replacing treewidth with a different measure could yield a more meaningful generalisation beyond bounded maximum degree. 
For example, Dallard, Krnc, Kwon, Milani\v{c}, Munaro, \v{S}torgel, and Wiederrecht~\cite{DallardKKMMW2024} conjectured that replacing treewidth by $\alpha$-treewidth\footnote{Also known as \emph{tree-independence number}. The parameter was independently introduced by Yolov \cite{Yolov2018MinorMatching} and Dallard, Milanič, and Štorgel \cite{dallard2021TreewidthVSClique1,Dallard2024TreewidthVSClique2}.} (where we replace the measure of the size of a largest bag in a tree-decomposition by its independence number, denoted by~$\ptw{\alpha}{G}$, see \cref{sec:Prelim} for more details) is enough for graphs that do not contain large stars as induced subgraphs.
Notice that, when using the maximum size of an independent set contained in a vertex set $S$ as the measure of $S$, the two suggested settings: Excluding an induced star and measuring the set in terms of its domination number, become interchangeable.

\paragraph{Our results.}
We say that the maximum number of leaves of an induced star in a graph $G$ is the \emph{local independence number}, denoted by $\local{\alpha}{G}$, of $G$.

This paper aims to propose a sensible interpolation between graphs of bounded maximum degree and graphs of bounded local independence number.

Bounding the maximum degree of a graph~$G$ puts bounds on both the size of every maximal clique of~$G$ and the size of the neighbourhoods of such cliques.
In particular, this puts bounds on the following two aspects of maximal cliques:
\begin{enumerate}
    \item The number of different maximal cliques a vertex may belong to, and
    \item the number of different ways a single maximal clique may be intersected by other maximal cliques in~$G$.
\end{enumerate}
Let us call the first number the \emph{clique-incidence-degree} and denote it by~$\cideg{G}$, 
and let us call the second number the \emph{clique-incidence-diversity} and denote it by~$\reduced{\omega}{G}$.
A graph class $\mathcal{G}$ is said to be hereditary if for every $G\in\mathcal{G}$, every induced subgraph of $G$ also belongs to $\mathcal{G}$.
We say that a hereditary graph class is \emph{clique-sparse} if there exists a constant $k$ such that both the clique-incidence-degree and the clique-diversity-degree of every graph in $\mathcal{G}$ is at most $k$.

We provide several characterisations for clique-sparse graph classes (see \cref{thm:clique-sparse}), including a characterisation in terms of forbidden induced subgraphs.
Moreover, we show that clique-sparse graphs, while more general than graphs of bounded degree, still allow the techniques of Gartland et al., Hendrey et al., and Korhonen \cite{GartlandKL2023-inducedMengerMaxDeg,HendreyNST2023-inducedMengerMaxDeg,Korhonen2023-inducedgridmaxdegree} to obtain induced variants of Menger's Theorem and the Grid Theorem for the setting of $\alpha$-treewidth.

Our last result is concerned with the connection between $\alpha$-treewidth and rankwidth (see \cref{sec:rankwidth} for the definition). 
For every graph~$G$, the $\alpha$-treewidth of~$G$ is bounded in a function of the local independence number and the rankwidth. 
In general, the converse is not true. 
However, in \cref{sec:rankwidth} we prove that the converse does hold for clique-sparse graphs. 
Moreover, we show that if we consider the classes of graphs obtained by excluding some, but not all, of the forbidden induced subgraphs for clique-sparse graphs mentioned above, then the reverse does not hold (unless the class of graphs is a class of cographs). 
This yields further understanding of clique-sparse graphs from a different angle.

In the following, we discuss the results of Gartland et al.\@, Hendrey et al.\@, and Korhonen \cite{GartlandKL2023-inducedMengerMaxDeg,HendreyNST2023-inducedMengerMaxDeg,Korhonen2023-inducedgridmaxdegree} in slightly more detail and provide a more in-depth discussion of our results.

\subsection{Clique-sparse graph classes}\label{subsec:clique-sparseGraphs}

Two of the most fundamental tools of structural graph theory are Menger's Theorem and the Grid Theorem of Robertson and Seymour. 
Menger's Theorem states that for every positive integer~$k$ and two sets~$A$ and~$B$ of vertices of graph~$G$, the graph either contains an \emph{$A$-$B$-linkage of order~$k$} (that is, a set~$\mathcal{P}$ of~$k$ vertex-disjoint paths whose union~${\bigcup \mathcal{P}}$ is a subgraph of~$G$), or an \emph{$A$-$B$-separator of order~$k$} (that is, a set~$S$ of~$k$ vertices for which~$G - S$ has no component containing vertices from both~$A$ and~$B$). 
The Grid Theorem states that there exist a function~$f$ such that for every positive integer~$k$, every graph contains either the ${(k \times k)}$-grid as a minor or has treewidth\footnote{We give a formal definition of treewidth in \cref{sec:Prelim}.} at most~$f(k)$. 

A generalisation of Menger's theorem to the \textsl{induced setting} wants to find an \emph{induced $A$-$B$-linkage of order~$k$} (that is, a set~$\mathcal{P}$ of~$k$ vertex-disjoint paths whose union~${\bigcup \mathcal{P}}$ is an induced subgraph of~$G$). 
Large cliques offer an obstacle for a verbatim approach of a generalisation: Between disjoint sets~$A$ and~$B$ of vertices in a large clique there is neither an induced $A$-$B$-linkage of order~$2$ nor an $A$-$B$-separator of small order.
The results of Gartland et al.~\cite{GartlandKL2023-inducedMengerMaxDeg} and Hendrey et al.~\cite{HendreyNST2023-inducedMengerMaxDeg} offer approximate versions of such a theorem, where there either is an induced $A$-$B$-linkage of order~$k$ or an $A$-$B$-separator whose order is bounded by~$f(k,\Delta(G))$ for a function ${f(k,d) \in \mathcal{O}(k(d+1)^{d^2 + 1})}$.

\paragraph{Clique-incidence-diversity and clique-incidence-degree: lifting the induced variant of Menger's Theorem.}

Let us first precisely state the version of the theorem of Gartland et al.

\begin{theorem}[Gartland, Korhonen, Lokshtanov 2023$^{+}$~\cite{GartlandKL2023-inducedMengerMaxDeg}]
    \label{thm:gartlandinducedmenger}
    Let $\Delta$ be a positive integer.
    For every graph $G$ of maximum degree $\Delta$, every two sets $A,B\subseteq \V{G}$, and every positive integer $k$, there either exists an induced $A$-$B$-linkage of order $k$ in $G$, or a set $S$ of size at most $k\cdot (\Delta+1)^{\Delta^2+1}$ vertices such that there does not exist an $A$-$B$-path in $G-X$.
\end{theorem}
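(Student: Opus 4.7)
The plan is to prove the theorem by induction on $k$, peeling off one induced $A$-$B$-path per induction step at a cost of at most $(\Delta+1)^{\Delta^2+1}$ vertices in the separator bound. The base case $k=1$ is immediate: every $A$-$B$-walk contains an induced $A$-$B$-subpath, so either an induced $A$-$B$-path exists or $\emptyset$ separates $A$ from $B$.

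For the inductive step, suppose there is no induced $A$-$B$-linkage of order $k$ in $G$, and that at least one $A$-$B$-path exists (else we are already done). The crux of the proof is a key lemma producing an induced $A$-$B$-path $P$ together with a set $X \subseteq V(G)$ with $|X| \leq (\Delta+1)^{\Delta^2+1}$ such that every induced $A$-$B$-path in $G - X$ is both disjoint from and non-adjacent to $V(P) \setminus X$, so that together with $P$ it forms an induced $2$-linkage in $G$. I would construct $P$ by an extremal choice---for instance, a shortest $A$-$B$-path broken by a secondary rule tailored to control local interference around $P$---and take $X$ to be a ball of radius $\Delta^2+1$ around a carefully chosen \emph{bottleneck} vertex on $P$, exploiting the fact that in a graph of maximum degree $\Delta$ such a ball has cardinality at most $(\Delta+1)^{\Delta^2+1}$.

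With $P$ and $X$ in hand, the induction runs smoothly: applying the inductive hypothesis to $G - X$ with sources $A \setminus X$, sinks $B \setminus X$, and parameter $k-1$ either yields a separator $S'$ of size at most $(k-1)(\Delta+1)^{\Delta^2+1}$---so that $S' \cup X$ is an $A$-$B$-separator in $G$ of the claimed size---or an induced $(k-1)$-linkage in $G-X$ which, by the protective property of $X$, combines with $P$ into an induced $k$-linkage in $G$, contradicting our assumption.

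The main obstacle is the key lemma: identifying the right extremal path and the right bottleneck vertex so that $X$ can be taken to have size depending only on $\Delta$ and not on the (possibly unbounded) length of $P$. The quadratic-in-$\Delta$ radius $\Delta^2+1$ reflects two compounding effects---one factor of $\Delta$ bounding how far along $P$ an interfering vertex can lie before the extremality of $P$ is violated (via a rerouting argument that replaces the offending subpath by a shortcut through the interfering vertex), and another factor of $\Delta$ arising from the BFS branching needed to enclose all such interference within one ball. Turning the slogan ``bounded degree precludes long-range interaction between induced paths'' into a sharp ball-radius bound is the heart of the proof, and once this localisation is established the rest of the argument is a clean peeling induction.
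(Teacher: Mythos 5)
This theorem is imported from Gartland, Korhonen, and Lokshtanov and is only \emph{stated} in the present paper, not proved; however, the introduction (Section~1.1) does summarise the GKL strategy, and it differs substantially from yours. GKL start from a large \emph{non-induced} $A$-$B$-flow and clean it up iteratively: they fix a distance-$3$ colouring of $V(G)$ using at most $\Delta^2+1$ colours (so that same-coloured vertices have disjoint closed neighbourhoods), and for each colour class in turn they contract the closed neighbourhoods of all vertices of that colour, losing at most a factor of $(\Delta+1)$ in flow value per round. After $\Delta^2+1$ rounds the surviving flow is induced, which is where the bound $k\cdot(\Delta+1)^{\Delta^2+1}$ comes from. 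In particular the exponent $\Delta^2+1$ counts colour classes, not the radius of a ball.

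Your proposal peels off one induced path at a time by induction on~$k$, which is a genuinely different route, but the key lemma you rely on does not stand as stated, for two reasons. First, your ``protective'' set $X$ is a ball of bounded radius around a \emph{single} bottleneck vertex of~$P$; an induced $A$-$B$-path can be arbitrarily long, and another induced $A$-$B$-path in $G-X$ may touch or be adjacent to $V(P)\setminus X$ at positions far from the bottleneck. Bounded maximum degree does not control how often a second induced path can brush against $P$ along its whole length, so the slogan ``bounded degree precludes long-range interaction between induced paths'' is not justified and is precisely where the real work would have to happen. Second, even granting the lemma as you state it, the recombination step does not close: since the bottleneck lies on~$P$, the ball $X$ intersects~$V(P)$, and a path $Q\subseteq G-X$ is only guaranteed non-adjacent to $V(P)\setminus X$; vertices of $Q$ may still have neighbours in $V(P)\cap X$, in which case $G[V(P)\cup V(Q)]$ is not an induced linkage. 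Until the key lemma is formulated precisely (in particular, how $P$ relates to $X$) and actually proved, the induction has a genuine gap at exactly the step you flag as ``the heart of the proof.''
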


The strategies of the two groups Gartland et al.~\cite{GartlandKL2023-inducedMengerMaxDeg} and Hendrey et al.~\cite{HendreyNST2023-inducedMengerMaxDeg} to prove their induced variants of Menger's theorem are very similar, they just have slightly different perspectives. 
For the sake of simplicity, we explain the strategy of Gartland et al.~\cite{GartlandKL2023-inducedMengerMaxDeg}. 
Their argument is based on the idea of starting with a large general flow and reducing it to a smaller, but still large, induced flow.
It splits into two main steps.
In the first step, they consider a set $X$ of vertices such that the members in~$X$ are pairwise at distance at least three to each other.
This means that the neighbourhoods of any two members of~$X$ do not intersect.
Let~$G_X$ be obtained from~$G$ by contracting the closed neighbourhood of every vertex in~$X$.
As the maximum degree of~$G$ is bounded, $N_G[x]$ contains only a bounded number of vertices for every vertex $x$.
So, of a large flow between two sets in~$G$, there still must remain a somewhat (depending on the maximum degree) large flow between the remainders of these sets in the contracted graph.

As the second step, they observe that on graphs with maximum degree~$\Delta$, one may colour the vertices with at most~$\Delta^2+1$ many colours such that any two vertices with the same colour are in distance at least three to each other.
Thus, the strategy established in the first step can be applied to each colour class separately, and because there are only a bounded number of colour classes, there still remains a large flow.

By controlling the two graph parameters \textsl{clique-incidence-degree} and \textsl{clique-incidence-diversity} directly, we obtain a strict generalisation of the regime of graphs with bounded maximum degree while maintaining the validity of \cref{thm:gartlandinducedmenger} (and later on also \cref{thm:maxdegreegridtheorem}) up to small adjustments. 

Of course, simply bounding these parameters would again not allow for a verbatim generalisation, since, as before, a large clique would be a counterexample. 
But, as hinted at in the introduction, a different measure of~$S$ yields the solution. 
Instead of the domination number~$\gamma(G[S])$ as suggested by Albrechtsen et al.~\cite{AlbrechtsenHTJKW2024-inducedMenger2paths} and Georgakopoulos and Papasoglu \cite{GeorgakopoulosP2023}, in this setting, we can improve the result by using the \emph{clique-cover number}~$\theta(G[S])$, that is the smallest number of cliques in~$G[S]$ for which~$S$ is covered by the vertex-sets of these cliques, as the measure for the separator. 
It is straightforward to see that~${\gamma(G) \leq \theta(G)}$ (since the set obtained by picking a vertex from every clique of a smallest clique-cover is a dominating set).
Hence, if we find a separator~$S$ for which~$\theta(G[S])$ is bounded, so is~$\gamma(G[S])$.

With these notions, we can prove the following theorem by reducing it to the result of Gartland et al.

\begin{restatable}{theorem}{hedgehogmenger}
    \label{thm:hedgehogmenger}
    Let $s,t$ be positive integers. 
    For every graph~$G$ with ${\reduced{\omega}{G} \leq s}$ and~${\cideg{G} \leq t}$, every two sets~${A,B \subseteq V(G)}$, and every positive integer~$k$, 
    there either exists an induced $A$-$B$-linkage~$\mathcal{L}$ of order~$k$ in~$G$, or a set~$S$ with~${\theta(G[S]) \leq k \cdot (t(s-1)+1)^{t^2(s-1)^2+1}}$ such that there exists no $A$-$B$-path in~${G-S}$. 
\end{restatable}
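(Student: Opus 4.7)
The plan is to reduce \cref{thm:hedgehogmenger} to \cref{thm:gartlandinducedmenger} through a clique-based quotient construction. Define an equivalence relation $\sim$ on $\V{G}$ by declaring $u \sim v$ precisely when $u$ and $v$ are contained in exactly the same set of maximal cliques of $G$, and let $H$ be the quotient graph whose vertex set is $\V{G}/{\sim}$ and in which two classes $[u], [v]$ are adjacent whenever some maximal clique of $G$ contains both. Write $A'$ (respectively $B'$) for the set of classes meeting $A$ (respectively $B$); the aim is to apply \cref{thm:gartlandinducedmenger} to $H$ with the sets $A'$ and $B'$.

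Before invoking that theorem I would verify three structural properties of $H$. First, each equivalence class is a clique of $G$, since any two of its elements share at least one maximal clique. Second, vertices $u, v$ lying in distinct classes are adjacent in $G$ if and only if $[u]$ and $[v]$ are adjacent in $H$; this rests on the fact that every edge of $G$ extends to some maximal clique. Third, and crucially, $\Delta(H) \leq t(s-1)$: every class is contained in at most $\cideg{G} \leq t$ maximal cliques (since all its vertices share the same list of incident maximal cliques), and by the definition of the clique-incidence-diversity, each such maximal clique holds at most $\reduced{\omega}{G} \leq s$ distinct equivalence classes, contributing at most $s-1$ further neighbours. The main (and rather mild) obstacle sits in this third property---carefully unfolding the definition of $\reduced{\omega}{G}$ to confirm that it bounds the number of $\sim$-classes within any single maximal clique; everything else then follows routinely.

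With these properties in hand, \cref{thm:gartlandinducedmenger} applied to $H$ with parameters $A'$, $B'$, $k$ yields either an induced $A'$-$B'$-linkage of order $k$ in $H$, or an $A'$-$B'$-separator $X$ in $H$ with $|X| \leq k \cdot (t(s-1)+1)^{t^2(s-1)^2+1}$. In the linkage case, I would lift each path of the linkage to a path of $G$ by selecting a vertex of $A$ in its first class, a vertex of $B$ in its last class, and any representative in each intermediate class; the second structural property above guarantees that the resulting $k$ vertex-disjoint paths form an induced subgraph of $G$, since every non-edge between classes in $H$ certifies a non-edge between any chosen representatives in $G$. This realises the desired induced $A$-$B$-linkage. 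In the separator case, set $S := \bigcup_{[v] \in X} [v]$; any $A$-$B$-path in $G - S$ would project along $\sim$ to an $A'$-$B'$-walk in $H - X$, which is impossible. Finally, the first structural property implies that $S$ is covered by the $|X|$ cliques $\{[v] : [v] \in X\}$ of $G[S]$, so $\theta(G[S]) \leq |X|$ satisfies the claimed bound, completing the reduction.
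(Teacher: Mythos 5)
Your proof follows exactly the paper's strategy: you construct the clique-quotient graph (which the paper calls~$\widetilde{G}$), verify that its maximum degree is at most~$t(s-1)$, apply \cref{thm:gartlandinducedmenger} to it, lift an induced linkage back to~$G$ via representatives (choosing endpoints in~$A$, $B$), and in the separator case take the preimage~$S = \bigcup_{[v]\in X}[v]$ and observe that the classes themselves form a clique cover of~$G[S]$. The ``obstacle'' you flag in the third structural property is in fact definitional --- $\reduced{\omega}{G}$ is precisely the maximum number of $\sim$-classes meeting a single maximal clique --- so the argument goes through without friction.
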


\paragraph{Clique-incidence-diversity and local independence number: lifting the induced variant of the Grid Theorem.}

In a similar fashion, Korhonen \cite{Korhonen2023-inducedgridmaxdegree} has proved a version of the Grid Theorem for induced minors in graphs with bounded maximum degree. 

\begin{theorem}[Korhonen, 2023 \cite{Korhonen2023-inducedgridmaxdegree}]
    \label{thm:maxdegreegridtheorem}
    There exists a function $f(k,\Delta)\in\mathcal{O}(k^{10}+2^{\Delta^5})$ such that for every positive integer $k$ and every graph $G$ with $\Delta(G)\leq \Delta$ it holds that, if $\mathsf{tw}(G)>f(k,\Delta)$, then $G$ contains the $(k\times k)$-grid as an induced minor.
\end{theorem}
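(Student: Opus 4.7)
The plan is to derive this induced grid theorem from the classical Grid Theorem of Robertson and Seymour together with a cleaning step that exploits the bounded maximum degree. Starting from a graph $G$ with $\tw{G}$ above the target threshold, I would first invoke a polynomial-size grid-minor theorem (such as those of Chekuri--Chuzhoy) to obtain a wall minor of size $k' = k^{\Theta(1)}$ in $G$; the precise polynomial exponent accounts for the $k^{10}$ contribution in $f(k,\Delta)$. Since walls are subcubic, this minor can be realised as a \emph{topological} minor: there is an embedded subdivision of a large wall $W'$ in $G$, and because every branch set is a tree in a bounded-degree graph, one can arrange for each subdivision path to have length bounded in terms of $\Delta$ alone.

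The heart of the argument is turning this (in general non-induced) wall subdivision into an induced grid minor. The obstructions are \emph{chords}: edges of $G$ between distinct subdivision paths that do not belong to $W'$. Because every vertex has at most $\Delta$ neighbours, each chord can reach only a bounded radius inside the wall, and only boundedly many chord endpoints can sit close to any branch vertex of $W'$. To exploit this, I would attach to each branch vertex $b$ of $W'$ a \emph{type} that records the isomorphism class of the ball of some bounded radius $r$ around $b$ in $G$, together with the way this ball meets $W'$. The number of such types is of the form $2^{\mathrm{poly}(\Delta)}$, which is where the $2^{\Delta^5}$ factor in $f(k,\Delta)$ is budgeted.

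To clean the wall I would then use a Ramsey-style colouring argument. In any graph of maximum degree $\Delta$, the $r$-ball around a vertex has at most $\Delta^r$ vertices, so $G$ admits a proper colouring with $\Delta^{\mathcal{O}(r)}$ colours whose colour classes are pairwise at distance more than $r$. Restricting to a monochromatic typed sub-wall produces a sub-wall whose branch vertices are pairwise far apart in $G$ and share the same local type. Long-range chords then disappear by the bounded-reach property, while chords internal to a single local ball can be absorbed by rerouting the wall uniformly through each ball using the shared type. A standard conversion from an induced wall to an induced $(k\times k)$-grid minor concludes the argument.

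The main obstacle I anticipate is the bookkeeping. Each typing, colouring, and rerouting round shrinks the wall by a factor depending on $\Delta$, and the whole procedure must be organised so that, across the constantly many recursion layers (one per relevant radius scale), the $\Delta$-dependent losses compound multiplicatively inside the $2^{\mathcal{O}(\Delta^5)}$ budget while the $k$-dependent losses remain polynomial. Keeping these two budgets cleanly separated, so that the final wall has size $k^{\Theta(1)}$ and no two chord classes interfere, is where I expect the technical heavy lifting to sit.
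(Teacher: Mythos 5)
A preliminary remark: the paper does not prove this statement --- it is quoted from Korhonen and used as a black box in the proof of Theorem~\ref{thm:hedgehoggrid}. The surrounding text only summarises Korhonen's argument as ``considering vertices with pairwise far enough distance'' and exploiting that distance-$r$-balls in bounded-degree graphs are small; your sketch shares that high-level flavour but the comparison must be against Korhonen's paper, not this one.

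Your central mechanism contains a genuine error. You claim that ``because every branch set is a tree in a bounded-degree graph, one can arrange for each subdivision path to have length bounded in terms of $\Delta$ alone.'' This is false: subdivide every edge of the $n \times n$-wall $\ell$ times. The result has maximum degree $3$ and treewidth $\Theta(n)$, yet every embedded wall subdivision in it uses subdivision paths of length at least $\ell + 1$, which is unbounded while $\Delta = 3$ is fixed. Everything downstream of that assumption --- the ``bounded reach'' of chords, the assertion that long-range chords vanish once branch vertices are pairwise far apart, and the local-type rerouting inside bounded-radius balls --- rests on it. Once subdivision paths can be arbitrarily long, two paths that represent wall-distant edges can still run within distance one of each other in $G$; bounded degree and far-apartness of the \emph{branch} vertices do not prevent this, and eliminating exactly these chords is the hard part of the theorem. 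A secondary gap: ``restricting to a monochromatic typed sub-wall'' is not a well-defined step, since deleting off-colour branch vertices destroys the wall, and taking every $r$-th row and column gives branch vertices that are far in the wall metric but not necessarily in $G$ (again, because subdivision paths may be long). As written, the sketch replaces the core difficulty of the theorem with a false premise rather than resolving it.
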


The proof of this theorem does not directly use a contraction trick similar to the one above.
However, considering vertices with pairwise far enough distance together with the fact that any distance-$r$-neighbourhood in graphs with small degrees must be small plays a huge role here as well.
While we again require the \textsl{clique-incidence-diversity} of our graphs to be bounded, the second parameter we bound this time is the \textsl{local independence number} rather than the clique-incidence-degree.

Our aim here is to make progress on the aforementioned conjecture of Dallard et al.~\cite{DallardKKMMW2024} by generalising Korhonen's result similarly as we did with Menger's theorem.
As before, we cannot use the size of the bags of a tree-decomposition as our measure, but in our setting, we can use the largest clique-cover number of a bag instead. 
We call the resulting parameter the $\theta$-treewidth (denoted by $\ptw{\theta}{G}$). 
We give a precise definition in \cref{sec:Prelim} and observe in \cref{sec:parametercomp} that ${\ptw{\alpha}{G} \leq \ptw{\theta}{G}}$. 

As before, we prove the following theorem by reducing it to the result of Korhonen. 

\begin{restatable}{theorem}{hedgehoggrid}
    \label{thm:hedgehoggrid}
    There exists a function ${f(k,s,d) \in \mathcal{O}(k^{10} + 2^{d^{5s}})}$ such that for every positive integer~$k$ and every graph $G$
    with~${\reduced{\omega}{G} \leq s}$ 
    and~${\local{\alpha}{G} \leq d}$ holds that, if ${\ptw{\theta}{G} > f(k,s,d)}$, then~$G$ contains the ${(k \times k)}$-grid as an induced minor. 
\end{restatable}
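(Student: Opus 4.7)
The plan is to reduce \cref{thm:hedgehoggrid} to Korhonen's \cref{thm:maxdegreegridtheorem} in the same spirit as the reduction of \cref{thm:hedgehogmenger} to \cref{thm:gartlandinducedmenger}: we replace $G$ by an auxiliary graph $H$ of bounded maximum degree in which large $\theta$-treewidth of $G$ manifests as large treewidth of $H$, apply Korhonen's theorem inside $H$, and finally pull the resulting induced grid minor back to $G$. The preparatory step is to show that the hypotheses $\reduced{\omega}{G} \leq s$ and $\local{\alpha}{G} \leq d$ already imply a bound $\cideg{G} \leq g(s,d)$ with $g(s,d) \in \mathcal{O}(d^{s})$ (or similar), which is what ultimately produces the target exponent $d^{5s}$ in the final bound. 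For this, fix a vertex $v$ and list the maximal cliques $C_1,\dots,C_m$ containing $v$: by clique-incidence-diversity, the set $\{C_2,\dots,C_m\}$ partitions into at most $s$ groups of the same intersection-type with $C_1$; within any group, representatives $u_j \in C_j\setminus C_1$ must, by maximality of the involved cliques, realise an induced star around $v$, whose size is bounded by $d$. The characterisation of clique-sparse graph classes in \cref{thm:clique-sparse} can then be invoked to package the resulting bound cleanly.

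We next construct $H$ by contracting each maximal clique of $G$ to a single vertex, with overlaps resolved through a canonical choice of clique-cover so that each bag of a tree-decomposition of $G$ corresponds to a clean bag in $H$. The two bounds $\cideg{G}\leq g(s,d)$ and $\reduced{\omega}{G}\leq s$ combine to bound the maximum degree of $H$ by some $h(s,d) \in \mathrm{poly}(d^{s}, s)$: a contracted clique-vertex $v_C$ can only be adjacent to vertices $v_{C'}$ with $C\cap C'\neq\emptyset$, and each $u\in C$ lies in at most $g(s,d)-1$ other maximal cliques, further grouped into $s$ intersection-types. The key transfer statement is then $\ptw{\theta}{G} \leq \tw{H} + \mathcal{O}(1)$: given a tree-decomposition of $H$ of width $w$, uncontracting every bag-vertex to its underlying clique produces a tree-decomposition of $G$ whose bags have clique-cover number at most $w+1$. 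Consequently, $\ptw{\theta}{G} > f(k,s,d)$ forces $\tw{H}$ above Korhonen's threshold in $k$ and $h(s,d)$, so \cref{thm:maxdegreegridtheorem} produces a ${(k\times k)}$-grid as induced minor of $H$. Substituting $h(s,d)\approx d^{s}$ into $\mathcal{O}(k^{10}+2^{\Delta^{5}})$ yields precisely the claimed $\mathcal{O}(k^{10}+2^{d^{5s}})$.

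The main obstacle is the last step, namely \emph{lifting} the induced minor from $H$ back to $G$: replacing each branch-set vertex of the grid-minor by its underlying clique and taking unions produces a family of connected sets in $G$, but we must verify that branch-sets which are non-adjacent in $H$ remain non-adjacent in $G$. This is delicate because maximal cliques of $G$ can share vertices, and because two vertex-disjoint maximal cliques of $G$ can still be joined by edges of $G$. The way around this is to define the edges of $H$ not by clique-intersection alone but by edge-incidence in $G$, so that every $G$-edge between two cliques is witnessed by an edge of $H$, and to preprocess $G$ using the clique-sparse characterisation \cref{thm:clique-sparse} to obtain a system of sufficiently well-separated cliques whose boundary edges are captured in $H$. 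Managing these edge-incidences while keeping $\Delta(H)$ small enough to preserve the $d^{5s}$ bound is expected to be the principal technical work.
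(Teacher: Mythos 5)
Your overall strategy — replace $G$ by a bounded-degree auxiliary graph, transfer the width bound, apply Korhonen's theorem, and pull the grid back — is the same in spirit as the paper's, but the auxiliary graph you construct is wrong, and the gap you flag at the end is fatal to your version of it.

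The paper does \emph{not} contract maximal cliques. It works with the clique-quotient graph $\widetilde{G}$, obtained by identifying classes of \emph{true twins} (equivalence classes of $\sim_{\mathcal{K}(G)}$). These classes are pairwise disjoint, so the quotient is unambiguous — no "canonical clique-cover to resolve overlaps" is needed, which is a first sign that your $H$ is the wrong object. More importantly, $\widetilde{G}$ has the decisive property (\cref{lem:clique-quotient-graph}\ref{item:lem:clique-quotient-graph3}) that it is isomorphic to an \emph{induced subgraph} $G[R]$ of $G$, where $R$ is any set of representatives. This means an induced grid minor found in $\widetilde{G}$ is automatically an induced minor of $G$; the entire "lifting" problem you identify as the principal technical difficulty simply does not arise. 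The other ingredients are then \cref{lemma:width-comparison}\ref{item:lemma:width-comparison0}--\ref{item:lemma:width-comparison1}, giving $\ptw{\theta}{G}=\ptw{\theta}{\widetilde{G}}\leq\tw{\widetilde{G}}$, and \cref{lem:parameterinequalities-2}, giving $\Delta(\widetilde{G})=\reduced{\Delta}{G}\leq \local{\alpha}{G}^{\reduced{\omega}{G}}\leq d^{s}$, which yields the $\mathcal{O}(k^{10}+2^{d^{5s}})$ bound when plugged into \cref{thm:maxdegreegridtheorem}.

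Your clique-contraction graph $H$ cannot play this role. Distinct maximal cliques overlap, so the contraction is not a well-defined quotient without an additional, essentially arbitrary, choice; and whatever choice you make, the resulting $H$ is not an induced subgraph of $G$, so there is no reason that an induced grid minor of $H$ should pull back to an induced minor of $G$. You correctly sense that edges between vertex-disjoint cliques are the obstruction, but the fix you propose (adding edge-incidence edges to $H$, then hoping to separate cliques) is not carried out and would have to control a degree blow-up that the true-twin quotient avoids by construction. Your preparatory step of bounding $\cideg{G}$ in terms of $s$ and $d$ is also not needed (and the sketch you give of it — partitioning other cliques into "$s$ intersection-types with $C_1$" — misreads what $\reduced{\omega}{}$ bounds); the right target is a bound on $\Delta(\widetilde{G})$, which is exactly \cref{lem:parameterinequalities-2}. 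In short: use true-twin identification, not clique contraction, and the problem you correctly identify as "the principal technical work" evaporates.
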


\paragraph{Clique-sparse graphs.}
At first glance, the class of graphs for which we prove an approximate induced Menger's Theorem with \cref{thm:hedgehogmenger} and the class of graphs for which we prove an induced Grid Theorem with \cref{thm:hedgehoggrid} may be different, but as it turns out, they are essentially the same. 
In fact, we see in \cref{sec:parametercomp} that in classes of graphs with bounded clique-incidence diversity~($\reduced{\omega}{}$) the clique-incidence degree~($\cideg{}$) is bounded if and only if the local independence number~($\local{\alpha}{}$) is bounded. 

The way we reduce \cref{thm:hedgehogmenger} to \cref{thm:gartlandinducedmenger} and \cref{thm:hedgehoggrid} to \cref{thm:maxdegreegridtheorem} actually reveals the strong and simple connection these graphs have to graphs of bounded maximum degree:
For a hereditary graph class $\mathcal{G}$, having bounded clique-incidence-degree and bounded clique-incidence-diversity is equivalent to the property that every graph $G\in\mathcal{G}$ can be turned into a graph of bounded degree by iteratively identifying \textsl{true twins}.
Here \emph{true twins} are two vertices with the same closed neighbourhood. 
Despite the rather simple structure of this graph class, its equivalence to the aforementioned classes, as well as multiple other equivalences that we discuss below, reveals a new depth to these graphs.

In \cref{sec:parametercomp} we also see that in graph classes where~$\reduced{\omega}{}$ is bounded, the \emph{local clique-cover number}~$\local{\theta}{}$, that is, the maximum (taken over all vertices~$v$) of the clique-cover number of the closed neighbourhood of $v$, is bounded if and only if~$\local{\alpha}{}$ is bounded. 
Hence, the class of graphs~$G$ where the maximum of~$\reduced{\omega}{G}$ and~$\local{\theta}{G}$ is bounded is clique-sparse.

\begin{figure}
    \colorlet{K4colour}{DeepCarrotOrange}
    \colorlet{K3colour}{AppleGreen}
    \colorlet{K2colour}{CornflowerBlue}
    \colorlet{K1colour}{purple}
    \centering
    \begin{tikzpicture}
        \node (centre) at (0,0) {};

        \node (G) at ($(centre)+(90:1)$) {
            \begin{tikzpicture}
                \node (G-centre) at (0,0) {};

                \node[vertex,myRed] (x-1) at ($(G-centre)$) {};
                \node (x-label) at ($(x-1)+(115:0.3)$) {\textcolor{myRed}{$x$}};
                \node (K4-centre) at ($(x-1)+(200:1)$) {};
                \node[vertex] (x-2) at ($(K4-centre)+({360/5+20}:1)$) {};
                \node[vertex] (x-3) at ($(K4-centre)+({360/5*2+20}:1)$) {};
                \node[vertex] (x-4) at ($(K4-centre)+({360/5*3+20}:1)$) {};
                \node[vertex] (x-5) at ($(K4-centre)+({360/5*4+20}:1)$) {};
                \node[vertex] (x-6) at ($(x-1)+(340:1)$) {};
                \node[vertex] (x-7) at ($(x-1)+(45:1)$) {};
                \node[vertex] (x-8) at ($(x-5)+(240:1)$) {};
                \node[vertex] (x-9) at ($(x-5)+(330:1)$) {};
                \node[vertex] (x-10) at ($(x-7)+(135:1)$) {};

                \draw[edge,K4colour] (x-1) to (x-2);
                \draw[edge,K4colour] (x-1) to (x-3);
                \draw[edge,K4colour] (x-1) to (x-4);
                \draw[edge,K4colour] (x-1) to (x-5);
                \draw[edge,K4colour] (x-2) to (x-3);
                \draw[edge,K4colour] (x-2) to (x-4);
                \draw[edge,K4colour] (x-2) to (x-5);
                \draw[edge,K4colour] (x-3) to (x-4);
                \draw[edge,K4colour] (x-3) to (x-5);
                \draw[edge,K4colour] (x-4) to (x-5);

                \begin{pgfonlayer}{background}
                    \filldraw[K4colour!30!white] (x-1.center) -- (x-2.center)
                        -- (x-3.center) -- (x-4.center) -- (x-5.center);
                \end{pgfonlayer}

                \node (K4-label) at ($(K4-centre) + (180:1.5)$) {\textcolor{K4colour}{$K_4$}};

                \draw[edge,K3colour] (x-1) to (x-6);
                \draw[edge,K3colour] (x-1) to (x-7);
                \draw[edge,K3colour] (x-5) to (x-6);
                \draw[edge,K3colour] (x-5) to (x-7);
                \draw[edge,K3colour] (x-6) to (x-7);

                \begin{pgfonlayer}{background}
                    \filldraw[K3colour!30!white] (x-1.center) -- (x-5.center)
                        -- (x-6.center) -- (x-7.center);
                \end{pgfonlayer}

                \node (K3-label) at ($(x-6) + (45:0.5)$) {\textcolor{K3colour}{$K_3$}};

                \draw[edge,K2colour] (x-2) to (x-7);
                \draw[edge,K2colour] (x-10) to (x-7);
                \draw[edge,K2colour] (x-2) to (x-10);
                \draw[edge,K2colour] (x-1) to (x-10);

                \begin{pgfonlayer}{background}
                    \filldraw[K2colour!30!white] (x-1.center) -- (x-2.center) -- (x-10.center)
                        -- (x-7.center);
                \end{pgfonlayer}

                \node (K2-label) at ($(x-10) + (350:0.8)$) {\textcolor{K2colour}{$K_2$}};

                \draw[edge,K1colour] (x-5) to (x-8);
                \draw[edge,K1colour] (x-5) to (x-9);
                \draw[edge,K1colour] (x-8) to (x-9);

                \begin{pgfonlayer}{background}
                    \filldraw[K1colour!30!white] (x-5.center) -- (x-8.center)
                        -- (x-9.center);
                \end{pgfonlayer}

                \node (K1-label) at ($(x-5) + (285:1.1)$) {\textcolor{K1colour}{$K_1$}};
                
            \end{tikzpicture}
        };

        \node (G-label) at ($(G)+(110:1.8)$) {$G$};

        \node (linegraph) at ($(centre)+(200:4)$) {
            \begin{tikzpicture}
                \node (linegraph-centre) at (0,0) {};

                \node[vertex,K1colour,scale=1.7] (K-1-v) at ($(linegraph-centre) + (270:1)$) {};
                \node[vertex,K2colour,scale=1.7] (K-2-v) at ($(linegraph-centre) + (90:1)$) {};
                \node[vertex,K3colour,scale=1.7] (K-3-v) at ($(linegraph-centre) + (0:1)$) {};
                \node[vertex,K4colour,scale=1.7] (K-4-v) at ($(linegraph-centre) + (180:1)$) {};

                \draw (K-1-v) to (K-3-v);
                \draw (K-1-v) to (K-4-v);
                \draw (K-2-v) to (K-3-v);
                \draw (K-2-v) to (K-4-v);
                \draw (K-3-v) to (K-4-v);
            \end{tikzpicture}
        };

        \node (quotient) at ($(centre)+(340:4)$) {
            \begin{tikzpicture}
                \node (Q-centre) at (0,0) {};

                \node[vertex] (x-1) at ($(Q-centre)$) {};
                \node (K4-centre) at ($(x-1)+(200:1)$) {};
                \node[vertex] (x-2) at ($(K4-centre)+({360/5+20}:1)$) {};
                \node[] (x-3) at ($(K4-centre)+({360/5*2+20}:1)$) {};
                \node[] (x-4) at ($(K4-centre)+({360/5*3+20}:1)$) {};
                \node[vertex,K4colour] (x-3-4) at ($(x-3)!0.5!(x-4)$) {};
                \node[vertex] (x-5) at ($(K4-centre)+({360/5*4+20}:1)$) {};
                \node[vertex] (x-6) at ($(x-1)+(340:1)$) {};
                \node[vertex] (x-7) at ($(x-1)+(45:1)$) {};
                \node[] (x-8) at ($(x-5)+(240:1)$) {};
                \node[] (x-9) at ($(x-5)+(330:1)$) {};
                \node[vertex,K1colour] (x-8-9) at ($(x-8)!0.5!(x-9)$) {};
                \node[vertex] (x-10) at ($(x-7)+(135:1)$) {};

                \draw[edge,K4colour] (x-1) to (x-2);
                \draw[edge,K4colour] (x-1) to (x-3-4);
                \draw[edge,K4colour] (x-1) to (x-5);
                \draw[edge,K4colour] (x-2) to (x-3-4);
                \draw[edge,K4colour] (x-2) to (x-5);
                \draw[edge,K4colour] (x-3-4) to (x-5);

                \begin{pgfonlayer}{background}
                    \filldraw[K4colour!30!white] (x-1.center) -- (x-2.center)
                        -- (x-3-4.center) -- (x-5.center);
                \end{pgfonlayer}

                \draw[edge,K3colour] (x-1) to (x-6);
                \draw[edge,K3colour] (x-1) to (x-7);
                \draw[edge,K3colour] (x-5) to (x-6);
                \draw[edge,K3colour] (x-5) to (x-7);
                \draw[edge,K3colour] (x-6) to (x-7);

                \begin{pgfonlayer}{background}
                    \filldraw[K3colour!30!white] (x-1.center) -- (x-5.center)
                        -- (x-6.center) -- (x-7.center);
                \end{pgfonlayer}

                \draw[edge,K2colour] (x-2) to (x-7);
                \draw[edge,K2colour] (x-10) to (x-7);
                \draw[edge,K2colour] (x-2) to (x-10);
                \draw[edge,K2colour] (x-1) to (x-10);

                \begin{pgfonlayer}{background}
                    \filldraw[K2colour!30!white] (x-1.center) -- (x-2.center) -- (x-10.center)
                        -- (x-7.center);
                \end{pgfonlayer}

                \draw[edge,K1colour] (x-5) to (x-8-9);
                
                \begin{pgfonlayer}{background}
                    \filldraw[K1colour!30!white]  ($(x-5)+(185:0.04)$)
                    to[bend right] ($(x-8-9)+(185:0.04)$) to ($(x-8-9)+(5:0.04)$) to[bend right] ($(x-5)+(5:0.05)$);
                \end{pgfonlayer}
            \end{tikzpicture}
        };

        \draw[->,-latex,shorten >=15mm,shorten <=15mm] (G.center) -- (quotient.center) node[pos=0.6,below=1mm] {$\widetilde{G}$};
        \draw[->,-latex,shorten >=15mm,shorten <=15mm] (G.center) -- (linegraph.center) node[pos=0.6,below=1mm] {$\linegraph{\mathcal{K}(G)}$};
    \end{tikzpicture}
    
    \caption{The graph $G$ has four maximal cliques $\mathcal{K}(G) = \Set{\textcolor{K1colour}{K_1},\textcolor{K2colour}{K_2},\textcolor{K3colour}{K_3},\textcolor{K4colour}{K_4}}$.
        On the left side of the illustration, we see the linegraph $\linegraph{\mathcal{K}(G)}$.
        On the right we see the quotient graph $\widetilde{G}$ of $G$.
        The coloured vertices in $\widetilde{G}$ represent contracted classes of true twins.
        Note that $\cideg{G}=3$, $\reduced{\omega}{G} = 4$, and $\cdeg{G} = 3$.}
    \label{fig:illustrate_different_parameters}
\end{figure}

We make use of another graph parameter that is equivalent to bounding both~$\reduced{\omega}{G}$ and~$\cideg{G}$ simultaneously: the \emph{clique-degree}, denoted by~$\cdeg{G}$, which we call the maximum number of distinct maximal cliques that a maximal clique intersects, see \cref{fig:illustrate_different_parameters} for a comparison of these three parameters.

Perhaps the most powerful characterisation of these parameters being bounded is in terms of forbidden induced subgraphs. 
In \cref{sec:forbiddeninducedsubgraphs}, we see that forbidding a star (\cref{fig:matchingKI_and_matchingKK}(a)), a split matching (\cref{fig:matchingKI_and_matchingKK}(b)), a matching between two cliques (\cref{fig:matchingKI_and_matchingKK}(c)), an anti-matching between two cliques (\cref{fig:matchingKI_and_matchingKK}(d)), and a half-graph between two cliques (\cref{fig:matchingKI_and_matchingKK}(e)) as an induced subgraph is equivalent to being clique-sparse. 

\begin{figure}[!ht]
    \centering
    \includegraphics[scale=0.8]{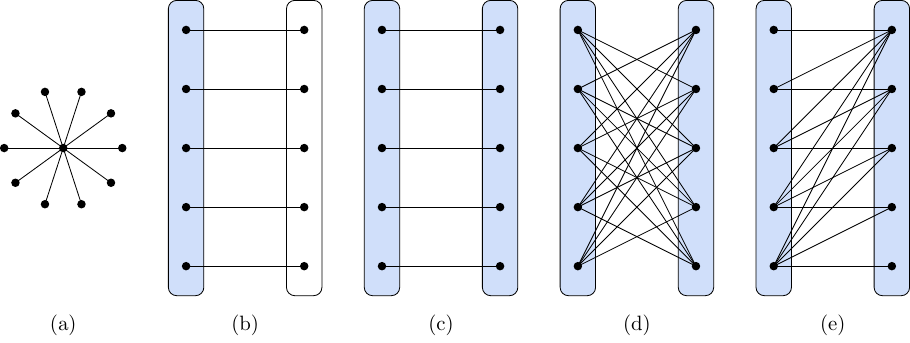}
    \caption{The forbidden induced subgraphs for clique-sparse graphs: 
        the \textcolor{CornflowerBlue}{filled} ellipses represent cliques, while the empty ellipses represent independent sets. See \cref{sec:Prelim} for precise definitions. }
    \label{fig:matchingKI_and_matchingKK}
\end{figure}

Let us summarise these characterisations of clique-sparseness in the following theorem. 

\begin{theorem}
    \label{thm:clique-sparse}
    Let~$\mathcal{G}$ be a hereditary class of graphs. 
    Then the following statements are equivalent. 
    \begin{enumerate}
        [label=(\arabic*)]
        \item $\mathcal{G}$ is clique-sparse (i.e.~both $\reduced{\omega}{}$ and $\cideg{}$ are bounded). 
        \item There is a constant $d$ such that for every graph in~$G$, the graph obtained from successively identifying true twins in~$G$ has maximum degree~$d$. 
        \item Both $\reduced{\omega}{}$ and $\local{\alpha}{}$ are bounded in $\mathcal{G}$.
        \item Both $\reduced{\omega}{}$ and $\local{\theta}{}$ are bounded in $\mathcal{G}$.
        \item $\cdeg{}$ is bounded in $\mathcal{G}$.
        \item $\mathcal{G}$ excludes a star, a split matching, a matching between two cliques, an anti-matching between two cliques, and a half-graph between two cliques as induced subgraph. 
    \end{enumerate}
\end{theorem}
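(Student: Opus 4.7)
The plan is to organise the equivalences into three direct counting observations, one technical heart bounding the number of maximal cliques of a neighbourhood, and two separate routines handling the quotient characterisation $(2)$ and the forbidden induced subgraph list $(6)$.

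I would first dispose of the direct counting equivalences. Any maximal clique $K$ meets at most $\cdeg{G}$ other maximal cliques, realises at most $\cdeg{G}$ distinct intersection patterns with them, and each vertex of $K$ lies in at most $\cdeg{G}+1$ maximal cliques, giving $\max\{\reduced{\omega}{G},\cideg{G}\}\le\cdeg{G}+1$; conversely, grouping the maximal cliques meeting $K$ by their non-empty intersection pattern $P$ and noting that any vertex of $P$ lies in at most $\cideg{G}$ cliques yields $\cdeg{G}\le\reduced{\omega}{G}\cdot\cideg{G}$, proving $(1)\Leftrightarrow(5)$. Next, $\local{\alpha}{G}\le\local{\theta}{G}$ (an independent set in $N(v)$ picks at most one vertex per clique of a minimum clique cover of $N(v)$) and $\local{\alpha}{G}\le\cideg{G}$ (an induced star at $v$ with independent leaves $u_1,\dots,u_k$ yields $k$ distinct maximal cliques through $v$ after saturating each edge $vu_i$, since equality of two such cliques would force the corresponding leaves to be adjacent), so $(4)\Rightarrow(3)$ and $(1)\Rightarrow(3)$.

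The substantive step is closing the chain at $(3)$, namely bounding $\cideg{G}$ and $\local{\theta}{G}$ in terms of $\reduced{\omega}{G}$ and $\local{\alpha}{G}$. Maximal cliques of $G$ through a vertex $v$ biject with maximal cliques of $H:=G[N(v)]$ and cover $N(v)$, so any uniform bound $f(s,d)$ on $|\mathcal{K}(H)|$ (where $\alpha(H)\le d$ and $\reduced{\omega}{H}\le s$ are inherited from $G$) delivers $(3)\Rightarrow(1)$ and $(3)\Rightarrow(4)$ simultaneously. I would bound $|\mathcal{K}(H)|$ by iterated pattern splitting: fixing any $K_0\in\mathcal{K}(H)$, the remaining maximal cliques partition into at most $s$ classes by their intersection pattern $P_j\subsetneq K_0$, and those in class $j$ biject with the maximal cliques of the strictly smaller induced subgraph $H_j:=H[N(P_j)\setminus K_0]$, which inherits both bounds. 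A local Ramsey argument must govern the recursion: selecting one witness per pattern class yields a set in $N(v)$ where Ramsey's theorem, together with $\alpha(H)\le d$, produces either a large independent set (an induced star forbidden by the $\local{\alpha}{}$-bound) or a large clique of witnesses forcing many maximal cliques to share a common extension and thereby collapsing the pattern diversity, terminating the recursion at bounded depth. Producing a bound independent of $|V(H)|$—in particular ruling out long chains of pattern inclusions—is the main obstacle of the proof.

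Having established $(1)\Leftrightarrow(3)\Leftrightarrow(4)\Leftrightarrow(5)$, the two remaining equivalences follow by separate routines. For $(1)\Leftrightarrow(2)$, the true-twin quotient $G\mapsto\widetilde G$ induces a bijection on maximal cliques: a true twin of any vertex of a maximal clique $K$ has the same closed neighbourhood and therefore lies in $K$ by maximality, so each maximal clique is a union of twin classes. Thus $\reduced{\omega}{}$, $\cideg{}$, and $\local{\alpha}{}$ are invariant under this quotient (for the last, vertices of an induced star have pairwise distinct closed neighbourhoods and so lie in distinct twin classes). On the twin-free graph $\widetilde G$, twin-freeness combined with bounded $\reduced{\omega}{}$ bounds $\omega(\widetilde G)$—distinct vertices of a large clique must be distinguished by outside witnesses, each producing a distinct intersection pattern with that clique—after which $\Delta(\widetilde G)\le R(\local{\alpha}{G}+1,\omega(\widetilde G)+1)$ follows from Ramsey's theorem applied in each neighbourhood; the reverse direction is immediate since bounded maximum degree trivially bounds all clique parameters. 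Finally, $(1)\Leftrightarrow(6)$: each of the five forbidden induced subgraphs certifies one of the parameters being large—a large star witnesses $\local{\alpha}{}$, while each matching-style configuration between two cliques forces either many distinct intersection patterns or many maximal cliques through a vertex, hence large $\reduced{\omega}{}$ or $\cideg{}$—so $(1)\Rightarrow(6)$; conversely, a multi-stage Ramsey argument, starting from a maximal clique with many intersection patterns or a vertex in many maximal cliques and iteratively cleaning up the adjacencies between representative witnesses, extracts one of the five configurations as an induced subgraph, establishing $(6)\Rightarrow(1)$.
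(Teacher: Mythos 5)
Your high-level architecture (reduce via the twin quotient $\widetilde{G}$, handle the easy parameter comparisons by direct counting, treat~$(6)$ by forcing a large structured bipartite configuration) matches the paper's, but there are two genuine gaps and two factual errors worth flagging.

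\textbf{Gap at $(3)\Rightarrow(1)$.} You correctly identify that the substantive content is bounding the number of maximal cliques through a vertex in terms of~$\local{\alpha}{}$ and~$\reduced{\omega}{}$, and you candidly admit that controlling the recursion depth ``is the main obstacle.'' The paper closes this by a different and cleaner argument (\cref{lem:parameterinequalities-2}): instead of recursing on the maximal cliques of a neighbourhood, it bounds $\reduced{\Delta}{G}$ by $\local{\alpha}{G}^{\reduced{\omega}{G}}$ via a labelled tree of branching factor $\local{\alpha}{G}$ and \emph{height} $\reduced{\omega}{G}$ --- the crucial observation is that each root-to-node path carries a growing clique of $\widetilde{G}$, so the depth is bounded by $\reduced{\omega}{G}$ automatically, and every neighbour of $\widetilde{v}$ must appear somewhere in the tree or else extend a maximum clique. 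This sidesteps the ``long chain of pattern inclusions'' problem entirely: the termination comes from $\reduced{\omega}{}$, not from the size of $H$. Once $\reduced{\Delta}{}$ is bounded, $\cideg{}$ and $\local{\theta}{}$ follow from Moon--Moser and \cref{lem:parameterinequalities-1}. Your recursive pattern-splitting approach, as stated, also has a subtle problem: the intersections $K'\cap K_0$ need not give a bijection to maximal cliques of $H[N(P_j)\setminus K_0]$ (a clique $K'\setminus K_0$ need not be maximal there, since a vertex of $N(P_j)\setminus K_0$ adjacent to all of $K'\setminus K_0$ need not be adjacent to all of $P_j$).

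\textbf{Gap at $(6)\Rightarrow(1)$.} Calling this a ``multi-stage Ramsey argument'' understates the machinery required. Ramsey's theorem alone will not extract one of the five coupled configurations from a clique with high incidence-diversity. The paper's route (\cref{thm:obstructionsforcid}) converts high $\reduced{\omega}{}$ into a large diversity number (\cref{lemma:diverseneighbours}), then into large local cutrank (\cref{lemma:averagecut,cor:diversitytorank}), and then invokes the unavoidable-bipartite-configurations theorem of Ding--Oporowski--Oxley--Vertigan (\cref{thm:coupledpairs}) to pull out a large coupled pair; Ramsey enters only at the last step, to replace the $Y$-side of the coupled pair by a clique or an independent set of size $t$. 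Without the cutrank and coupled-pairs steps, there is no route from ``many twin classes'' to the structured matching/anti-matching/half-graph output.

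\textbf{Two factual errors.} First, $\reduced{\omega}{G}\leq\cdeg{G}+1$ is false --- the correct bound (\cref{obs:CID+CIDEGvsCDEG}) is $\reduced{\omega}{G}\leq 2^{\cdeg{G}}$, and this is essentially tight: take $K$ on $\{v_1,\dots,v_8\}$ and attach three private pendant vertices $w_1,w_2,w_3$ with $v_j\in N(w_i)$ iff the $i$-th bit of $j-1$ is $1$; then $\cdeg{G}=3$ but all eight vertices of $K$ lie in pairwise distinct twin classes. The error comes from conflating the number of pairwise intersections $K\cap K'$ (at most $\cdeg{G}$) with the number of twin classes inside $K$, which are Boolean combinations of those intersections. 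Second, and for the same reason, the claim that the maximal cliques through $v$ ``partition into at most $s$ classes by their intersection pattern $P_j\subsetneq K_0$'' is false; the correct count is at most $2^s-1$. Neither error destroys the asymptotic equivalences, but both propagate into the recursion you sketch.

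In short: the $(1)\!\Leftrightarrow\!(5)$ and $(1)\!\Leftrightarrow\!(2)$ parts and the easy directions of the remaining implications are essentially right (modulo the numeric slips), but the two load-bearing steps --- \cref{lem:parameterinequalities-2} and \cref{thm:obstructionsforcid} --- are not supplied by your proposal.
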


As mentioned earlier, our last result establishes a close relation between $\alpha$-treewidth and rankwidth in clique-sparse graph classes. 
In particular, we establish in \cref{sec:rankwidth} exactly which of the following: a split matching, a matching between two cliques, an anti-matching between two cliques, a half-graph between two cliques, and a star, a graph class needs to exclude in order for $\alpha$-treewidth to be equivalent to the maximum of the local independence number and the rankwidth.
Indeed, this is the case if and only if the class of graphs excludes~$P_4$ and a star, or it excludes one of each of the aforementioned graph families.

\section{Preliminaries}
\label{sec:Prelim}

Given two integers $a,b\in\mathbb{Z},$ we denote by $[a,b]$ the set $\{ x\in\mathbb{Z} \mid a\leq x\leq b \}.$
Notice that $[a,b]$ is empty in case $b<a.$
Moreover, for a single integer $k\in\mathbb{Z},$ we denote by $[k]$ the set $[1,k].$
Unless stated otherwise, all logarithms in this paper are assumed to be binary. 

Let $G$ and $H$ be graphs and $e=xy\in E(G)$ be an edge.
We say that $H$ is obtained from $G$ by \emph{contracting} $e$ if there exists a vertex $u\in V(H)$ such that $H-u\equiv G-x-y$ and $N_H(u)= N_G(x)\cup N_G(y)$.
If $H$ is obtained from $G$ by repeatedly contracting edges, we say that $H$ is \emph{obtained from $G$ by edge contractions}.

We say that a graph $H$ is an \emph{induced minor} of a graph $G$ if $H$ can be obtained from $G$ by vertex deletions and edge contractions.

Let $G$ be a graph and $X\subseteq V(G)$.
We denote by $N_G(X)$ the \emph{neighbourhood} of $X$ in $G$, that is the set ${\{ v\in V(G)\setminus X \mid vx\in E(G)\text{ for some }x\in X \}}$. 
Moreover, $N_G[X]\coloneqq N_G(X)\cup X$ is called the \emph{closed neighbourhood} of $X$.
For a positive integer $r$, the \emph{distance-$r$-neighbourhood} of a vertex $v\in V(G)$, denoted by $N^r_G(v)$, is the set $\{ u\in V(G)\setminus\{ v\} \mid \mathsf{dist}_G(u,v)\leq r \}$.
The \emph{closed distance-$r$-neighbourhood} of $v$, denoted by $N_G^r[v]$, is the set $N_G^r(v)\cup\{ v\}$.

Let $G$ be a graph and $r\in \N$ be a positive integer.
The \emph{$r$-th power} of $G$, denoted by $G^r$, is the graph with vertex set $V(G)$ where any two distinct vertices $u,v\in V(G)$ are adjacent if and only if $\mathsf{dist}_G(u,v)\leq r$.

Let $G$ be a graph.
In what follows, we often work on the following hypergraph structure induced by the cliques of $G$.
Let $\mathcal{K}(G)$ denote the set of all vertex sets of $G$ which induce maximal cliques in $G$.
Then $(V(G),\mathcal{K}(G))$ forms a hypergraph.
As a convention, we suppress isolated vertices in hypergraphs, and thus, it suffices to denote a hypergraph as its edge set.
In particular, we can consider $\mathcal{K}(G)$ as a hypergraph. 
By slight abuse of notation, we may refer to~${K \in \mathcal{K}(G)}$ to be the maximal clique of $G$ instead of writing~$G[K]$. 

Let $\mathcal{E}$ be a hypergraph.
The \emph{linegraph} of $\mathcal{E}$, denoted by $\linegraph{\mathcal{E}}$, is the graph with vertex set $\mathcal{E}$ where two vertices $e,f\in\mathcal{E}$ are adjacent if and only if they are distinct and $e\cap f\neq \emptyset$. 

Let $k$ be a positive integer, $G$ be a graph, and $A,B \subseteq V(G)$ be sets of vertices.
An \emph{$A$-$B$-linkage} of \emph{order $k$} is a subgraph $L$ of $G$ consisting of $k$ pairwise vertex-disjoint $A$-$B$-paths.
An $A$-$B$-linkage is \emph{induced} if $L=G[V(L)]$. 

For a set~$X$, let~$K(X)$ denote the clique with vertex set~$X$. 
For two graphs~$G$ and~$H$, we define the \emph{union}~$G \cup H$ as the graph on~$V(G) \cup V(H)$ with edge set~$E(G) \cup E(H)$. 

\paragraph{Graph parameters and parametric graphs.}
A \emph{graph parameter} is a map $\mathsf{p}$ that assigns to every graph $G$ a non-negative integer value $\mathsf{p}(G)$.
Two graph parameters $\mathsf{p}$ and $\mathsf{q}$ are said to be \emph{asymptotically equivalent}, or simply \emph{equivalent}, if there exist functions $f,g\colon \N\to\N$ such that for all graphs $G$ it holds that
\begin{align*}
f(\mathsf{p}(G))\leq \mathsf{q}(G)\leq g(\mathsf{p}(G)).
\end{align*}
Let $\preceq$ be a graph containment relation.
A graph parameter $\mathsf{p}$ is said to be \emph{$\preceq$-monotone} if for every pair of graphs $H\preceq G$ it holds that $\mathsf{p}(H)\leq \mathsf{p}(G)$.

In this paper, we are mainly concerned with two graph containment relations.
The first is the \emph{induced subgraph relation}, denoted by $\InducedSubgraph$.
A graph $H$ is an \emph{induced subgraph} of a graph $G$, denoted~$H\InducedSubgraph G$, if it can be obtained from $G$ by deleting vertices.
A graph parameter $\mathsf{p}$ is said to be \emph{hereditary} if it is $\InducedSubgraph$-monotone.

The second relation is the \emph{induced minor relation}, denoted by $\InducedMinor$.
A graph $H$ is an \emph{induced minor} of a graph $G$, denoted~$H\InducedMinor G$, if it can be obtained from $G$ by a sequence of vertex deletions and edge contractions.
We say that a graph parameter $\mathsf{p}$ is \emph{induced minor monotone} if it is $\InducedMinor$-monotone.

Let $\mathsf{p}_1,\dots,\mathsf{p}_{\ell}$ be graph parameters.
We denote by $\maxP{\mathsf{p}_1,\dots,\mathsf{p}_{\ell}}{}$ the parameter defined as $\maxP{\mathsf{p}_1,\dots,\mathsf{p}_{\ell}}{G} \coloneqq \max \{ \mathsf{p}_1(G),\dots,\mathsf{p}_{\ell}(G)\}$.

For any graph parameter~$\mu$, we define a \emph{local} version of~$\mu$ via 
        \[
            \local{\mu}{G} \coloneqq \max \{ \mu(N[v]) \mid v \in V(G) \}.
        \]

\medskip

A \emph{parametric graph} is a sequence $\mathscr{H}=\langle \mathscr{H}_i\rangle_{i\in\N}$.
Given a graph containment relation $\preceq$ we say that a parametric graph $\langle \mathscr{H}_i\rangle_{i\in\N}$ is \emph{$\preceq$-monotone} if $\mathscr{H}_i\preceq \mathscr{H}_{i+1}$ for all $i\in\N$. 

Let~$\mathfrak{F}=\{ \langle\mathscr{H}^1_j\rangle_{j\in\N},\dots,\langle\mathscr{H}^{\ell}_j\rangle_{j\in\N}\}$ be a set of parametric graphs and let~$\preceq$ be a graph containment relation. 
We define the following graph parameter
\begin{align*}
\bigPG{\mathfrak{F}}{\preceq}{G}\coloneqq \max\{ k \mid \text{there exists }i\in[\ell]\text{ such that }\mathscr{H}^i_k\preceq G \}.
\end{align*}
In case $\mathfrak{F}$ contains a single parametric graph $\mathscr{H}$ we write $\bigPG{\mathscr{H}}{\preceq}{G}$ instead of $\bigPG{\mathfrak{F}}{\preceq}{G}$.

\medskip

Now, let us discuss some specific parametric graphs.

Let~$X$ and~$Y$ be sets. 
We denote by~$K(X)$ the complete graph with vertex set~$X$ and by~$K(X,Y)$ the complete bipartite graph with bipartition classes~$X$ and~$Y$.
Now suppose~${X = \{ x_1, \dots, x_n \}}$ and~$Y = \{ y_1, \dots, y_n \}$ are ordered sets of the same size.
Then
\begin{itemize}
    \item let $\mathscr{M}(X,Y)$ denote the bipartite graph with vertex set~$X \cup Y$ and edge set~$\{ x_i y_i \mid i \in [n] \}$, which we call the \emph{matching} between~$X$ and~$Y$,
    \item let~$\mathscr{A}(X,Y)$ bipartite graph with vertex set~$X \cup Y$ and edge set~$\{ x_i y_j \mid i,j \in [n], \, i \neq j \}$, which we call the \emph{anti-matching} between~$X$ and~$Y$, and
    \item let $\mathscr{H}(X,Y)$ bipartite graph with vertex set~$X \cup Y$ and edge set~$\{ x_i y_j \mid i,j \in [n], \, i \leq j \}$, which we call the \emph{half-graph} between~$X$ and~$Y$. 
\end{itemize}

We use these notations to denote several parametric graphs, see also \cref{fig:allhedgehogs}. 
For each positive integer~$i$, let~${X_i = \{ x_1, \dots, x_i \}}$ and~$Y_i = \{ y_1, \dots, y_i \}$ denote (ordered) sets of the same size, and let~${c \notin X_i \cup Y_i}$

\begin{itemize}
    \item Let~${\Star_{i} = K(\{c\},X_i)}$ denote the \emph{star} with with~$i$ leaves, and let 
    $\Star = \langle {\Star_{i}} \rangle_{i \in \N}$ 
    denote the parametric graph of \emph{stars}.
    
    \item Let~$\matchingII_i = \mathscr{M}(X_i,Y_i)$ denote the \emph{matching} of order~$i$, and let 
    $\matchingII = \langle \matchingII_i \rangle_{i \in \N}$ 
    denote the parametric graph of \emph{matchings}.
    
    \item Let~$\matchingKI_i = \mathscr{M}(X_i,Y_i) \cup K(X_i)$ denote the \emph{split matching} of order~$i$, and let 
    $\matchingKI = \langle \matchingKI_i \rangle_{i \in \N}$ 
    denote the parametric graph of \emph{split matchings}.
    
    \item Let~$\matchingKK_i = \mathscr{M}(X_i,Y_i) \cup K(X_i) \cup K(Y_i)$ denote the \emph{clique matching} of order~$i$, and let 
    $\matchingKK = \langle \matchingKK_i \rangle_{i \in \N}$ 
    denote the parametric graph of \emph{clique matchings}.
    
    \item Let~$\antimatchingII_i = \mathscr{A}(X_i,Y_i)$ denote the \emph{anti-matching} of order~$i$, and let 
    $\antimatchingII = \langle \antimatchingII_i \rangle_{i \in \N}$ 
    denote the parametric graph of \emph{anti-matchings}.
    
    \item Let~$\antimatchingKI_i = \mathscr{A}(X_i,Y_i) \cup K(X_i)$ denote the \emph{split anti-matching} of order~$i$, and let 
    $\antimatchingKI = \langle \antimatchingKI_i \rangle_{i \in \N}$ 
    denote the parametric graph of \emph{split anti-matchings}.
    
    \item Let~$\antimatchingKK_i = \mathscr{A}(X_i,Y_i) \cup K(X_i) \cup K(Y_i)$ denote the \emph{clique anti-matching} of order~$i$, and let 
    $\antimatchingKK = \langle \antimatchingKK_i \rangle_{i \in \N}$ 
    denote the parametric graph of \emph{clique anti-matchings}.
    
    \item Let~$\halfgraphII_i = \mathscr{H}(X_i,Y_i)$ denote the \emph{half-graph} of order~$i$, and let 
    $\halfgraphII = \langle \halfgraphII_i \rangle_{i \in \N}$ 
    denote the parametric graph of \emph{half-graphs}.
    
    \item Let~$\halfgraphKI_i = \mathscr{H}(X_i,Y_i) \cup K(X_i)$ denote the \emph{split half-graphs} of order~$i$, and let 
    $\halfgraphKI = \langle \halfgraphKI_i \rangle_{i \in \N}$ 
    denote the parametric graph of \emph{split half-graphs}.
    
    \item Let~$\halfgraphKK_i = \mathscr{H}(X_i,Y_i) \cup K(X_i) \cup K(Y_i)$ denote the \emph{clique half-graphs} of order~$i$, and let 
    $\halfgraphKK = \langle \halfgraphKK_i \rangle_{i \in \N}$ 
    denote the parametric graph of \emph{clique half-graphs}. 
\end{itemize}

\begin{figure}[!ht]
    \centering
    \includegraphics[scale=0.5]{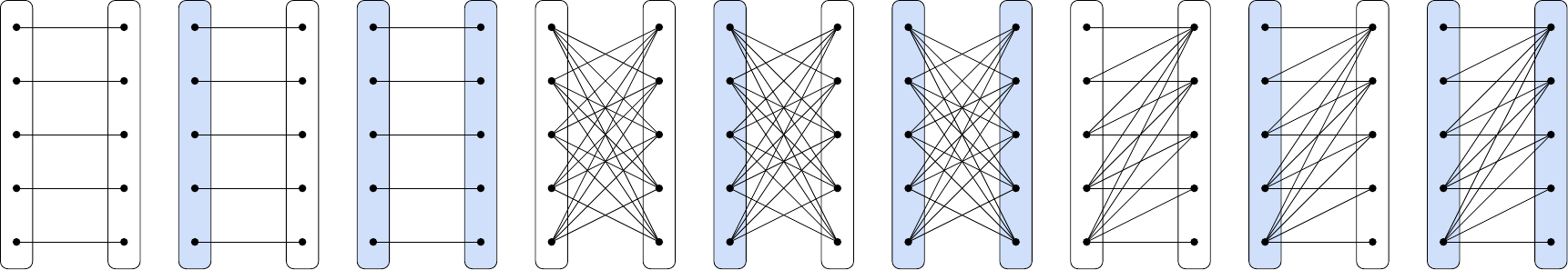}
    \caption{From left to right, the graphs~$\matchingII_5$, $\matchingKI_5$, $\matchingKK_5$, $\antimatchingII_5$, $\antimatchingKI_5$, $\antimatchingKK_5$, $\halfgraphII_5$, $\halfgraphKI_5$, $\halfgraphKK_5$. 
        The \textcolor{CornflowerBlue}{filled} ellipses represent cliques, while the empty ellipses represent independent sets. }
    \label{fig:allhedgehogs}
\end{figure}

Let $\mathcal{C}$ be a graph class and $\preceq$ be a graph containment relation.
We say that $\mathcal{C}$ is \emph{closed under $\preceq$} if for every pair $H\preceq G$ of graphs $G\in\mathcal{C}$ implies that $H\in\mathcal{C}$.
A graph class is \emph{hereditary} if it is closed under $\InducedSubgraph$ and \emph{closed under induced minors} if it is $\InducedMinor$-closed.

\paragraph{Graph measures and tree-decompositions.}

A graph measure is a function $\mu$ that, for every graph $G$, assigns to each vertex set $X\subseteq V(G)$ a non-negative integer $\mu_G(X)$.
For the two graph parameters $\alpha$ and $\theta$ we define corresponding graph measures by $\mu_G(X)\coloneqq \mu(G[X])$ for both $\mu\in\{ \alpha,\theta\}$.
Generally, if the graph is understood from the context, we drop the subscript in this notation.
In this paper,r we are mostly concerned with the graph measures $\alpha$, $\theta$, and the \textit{cardinality}, denoted by $\abs{{}\cdot{}}$.

A \emph{tree-decomposition} for a graph $G$ is a tuple $\mathcal{T}=(T,\beta)$ where $T$ is a tree and $\beta\colon V(T)\to 2^{V(G)}$, called the \emph{bags} of $\mathcal{T}$, such that
\begin{enumerate}
	\item $\bigcup_{t\in V(T)}\beta(t)=V(G)$,
	\item for every $e\in E(G)$ there exists some $t\in V(T)$ such that $e\subseteq \beta(t)$, and
	\item for every vertex $v\in V(G)$ the set $\{ t\in V(T) \mid v\in\beta(T) \}$ induces a subtree of $T$.
\end{enumerate}
Given a graph measure $\mu$, the \emph{$\mu$-width} of $\mathcal{T}$ is defined as the value~${\max_{t\in V(T)}\mu_G(\beta(t))}$ and the $\mu$-treewidth of~$G$, denoted by $\mu$-$\mathsf{tw}(G)$, is the minimum $\mu$-width over all tree-decompositions for~$G$.

For $\mu=\abs{{}\cdot{}}$, the definition above yields a parameter which is (up to a $-1$ in the original definition) equal to \emph{treewidth}.
For $\mu=\alpha$ we obtain the definition of $\alpha$-treewidth, or sometimes called the \textit{tree-independence-number}.
Finally, for $\theta$ we obtain the notion of \emph{clique-cover-treewidth}.
In slight abuse of notation
we write $\tw{G}$ instead of $\abs{{}\cdot{}}\text{-}\tw{G}$. 

Let~$G$ be a graph and~$\mu$ be a graph measure.
A \emph{separation} in~$G$ is a tuple~${(A,B)}$ such that there is no edge between~${A \setminus B}$ and~${B \setminus A}$.
The \emph{$\mu$-order} of ${(A,B)}$ is defined to be the value~${\mu(A \cap B)}$.

\section{Clique-incidence-diversity, clique-incidence-degree, and the clique-quotient graph}
\label{sec:cid}

Let~$G$ be a graph. 
Recall that~$\mathcal{K}(G)$ denotes the set of all vertex sets of~$G$ which induce maximal cliques in~$G$.
We define an equivalence relation~$\sim_{\mathcal{K}(G)}$ on~$V(G)$ by saying that~${v \sim_{\mathcal{K}(G)} w}$ if and only if for all~${K \in \mathcal{K}(G)}$, 
\[{v \in K} \textnormal{ if and only if } {w \in K}.\]
We denote by~$\widetilde{V(G)}$ the set of equivalence classes of~$\sim_{\mathcal{K}(G)}$ and by~$\widetilde{G}$ the graph on~$\widetilde{V(G)}$ obtained from~$G$ by identifying equivalent vertices. 
We call~${\widetilde{G}}$ the \emph{clique-quotient graph} of~$G$. 
Observe that two vertices~${v,w}$ of~$G$ are equivalent, if and only if they are \emph{true twins} in~$G$, that is, for every~${x \in V(G)}$, we have that~${vx \in E(G)}$ if and only if~${vy \in E(G)}$. 
In other words, the equivalence classes are precisely the \emph{modules} of~$G$ (that are, the sets~$X$ of vertices such that each vertex outside of~$X$ is either adjacent to each vertex in~$X$ or no vertex in~$X$) that are also cliques, and the clique-quotient graph is obtained from~$G$ by contracting every such clique-module. 
Throughout this paper, for ease of notation, for each~${v \in V(G)}$, we denote by~$\widetilde{v}$ the equivalence class of~$v$ with respect to~$\sim_{\mathcal{K}(G)}$, and for~${X \subseteq V(G)}$, we denote by~${\widetilde{X}}$ the set~$X /{\sim_{\mathcal{K}(G)}} = \{ \widetilde{x} \in \widetilde{V(G)} \mid x \in X \}$. 
On the other hand, we may introduce an arbitrary vertex~${\widetilde{v} \in V(\widetilde{G})}$ without having introduced~$v$ beforehand. 
In this case, we may implicitly talk about~${v}$ as a representative of~$\widetilde{v}$. 
Similarly, we may introduce an arbitrary subset~$\widetilde{X} \subseteq V(\widetilde{G})$ without having introduced a set~$X$ beforehand. 
We often make use of the set~${\bigcup \widetilde{X} \coloneqq \{ x \in V(G) \mid \widetilde{x} \in \widetilde{X} \}}$.
Note that~${\widetilde{\bigcup \widetilde{X}} = \widetilde{X}}$. 
Additionally, when we introduce~$\widetilde{X} \subseteq V(\widetilde{G})$ for a set~$X$ of representatives, then we take~$X$ to be a set that, for each~$\widetilde{x} \in \widetilde{X}$, contains exactly one element of~$\widetilde{x}$. 

Let~${x,y \in V(G)}$ with~${x \not\sim_{\mathcal{K}(G)} y}$. 
By definition of~$\widetilde{G}$, we have
\begin{equation}
    {xy \in E(G)} \,\text{ if and only if }\, \widetilde{x}\widetilde{y} \in E(\widetilde{G}). \tag{$\ast$} \label{eq:ast}
\end{equation}
Expanding on this idea, we observe the following. 

\begin{observation}
    \label{obs:GtildeEdges}
    Let~$G$ be a graph, let~${X,Y \subseteq V(G)}$. 
    Then the following properties hold. 
    \begin{enumerate}
        [label=(\arabic*)]
        \item \label{obs:GtildEedges:1} There is an edge between~$\widetilde{X}$ and~$\widetilde{Y}$ in~$E(\widetilde{G})$ if and only if there is an edge between~$\bigcup \widetilde{X}$ and~$\bigcup \widetilde{Y}$ in~$E(G)$. 
        \item \label{obs:GtildEedges:2} If there is an edge between~$\widetilde{X}$ and~$\widetilde{Y}$ in~$E(\widetilde{G})$, then there is an edge between~$X$ and~$Y$ in~$E(G)$. 
        \item \label{obs:GtildEedges:3} If there is an edge between~$X$ and~$Y$ in~$E(G)$, then either there is an edge between~$\widetilde{X}$ and~$\widetilde{Y}$ in~$E(\widetilde{G})$, or a vertex in~$\widetilde{X} \cap \widetilde{Y}$.
    \end{enumerate}
\end{observation}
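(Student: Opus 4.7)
The plan is to derive all three parts directly from property~\eqref{eq:ast} together with the basic fact that two distinct $\sim_{\mathcal{K}(G)}$-equivalent vertices are true twins and hence adjacent in~$G$; once this is recognised, each part is just careful bookkeeping of representatives.

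For part~\ref{obs:GtildEedges:2}, I would observe that an edge $\widetilde{x}\widetilde{y}\in E(\widetilde{G})$ with $\widetilde{x}\in\widetilde{X}$ and $\widetilde{y}\in\widetilde{Y}$ comes, by the definitions of $\widetilde{X}$ and $\widetilde{Y}$, with representatives $x\in X$ of the class $\widetilde{x}$ and $y\in Y$ of the class $\widetilde{y}$. Since $\widetilde{G}$ has no loops, $\widetilde{x}\neq\widetilde{y}$, and hence $x\not\sim_{\mathcal{K}(G)}y$, so \eqref{eq:ast} produces the desired edge $xy\in E(G)$ between $X$ and $Y$.

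For part~\ref{obs:GtildEedges:3}, I would take an edge $xy\in E(G)$ with $x\in X$ and $y\in Y$ and split on whether $x\sim_{\mathcal{K}(G)}y$. If so, $\widetilde{x}=\widetilde{y}$ is a common element of $\widetilde{X}\cap\widetilde{Y}$; if not, \eqref{eq:ast} produces the edge $\widetilde{x}\widetilde{y}\in E(\widetilde{G})$ between $\widetilde{X}$ and $\widetilde{Y}$.

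Finally, part~\ref{obs:GtildEedges:1} would be obtained by running the same two arguments with $\bigcup\widetilde{X}$ and $\bigcup\widetilde{Y}$ in place of $X$ and $Y$, using the identity $\widetilde{\bigcup\widetilde{X}}=\widetilde{X}$ (and analogously for $Y$) recorded earlier. The forward direction is then identical to part~\ref{obs:GtildEedges:2}. For the backward direction, the case split of part~\ref{obs:GtildEedges:3} either hands back the desired edge of $\widetilde{G}$ directly or, in the ``true-twins'' subcase where the chosen $G$-edge lives inside a single equivalence class, only produces a shared class in $\widetilde{X}\cap\widetilde{Y}$; handling this subcase cleanly is the single point in the proof that requires care, and I expect to resolve it by picking another representative in the shared class and translating its adjacencies to a distinct class in $\widetilde{X}\cup\widetilde{Y}$ back through~\eqref{eq:ast}. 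Everything else is a direct unwinding of the definitions.
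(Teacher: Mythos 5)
Your handling of parts (2) and (3) is exactly the paper's argument: deduce (2) from~\eqref{eq:ast} and the definitions of~$\widetilde{X}$ and~$\widetilde{Y}$, and deduce (3) by splitting on whether the two ends of the $G$-edge are $\sim_{\mathcal{K}(G)}$-equivalent.

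The subcase you flag in part (1) is a genuine gap, and the repair you sketch (``picking another representative in the shared class and translating its adjacencies'') does not close it: all representatives of a single $\sim_{\mathcal{K}(G)}$-class are true twins and hence have identical adjacencies, so re-picking buys you nothing, and you can be left with a class in~$\widetilde{X}\cap\widetilde{Y}$ and no edge of~$\widetilde{G}$ at all. Concretely, take~$G=K_2$ on~$\{u,v\}$ with~$X=\{u\}$ and~$Y=\{v\}$. Then $u\sim_{\mathcal{K}(G)}v$, so $\widetilde{G}$ is a single isolated vertex, $\widetilde{X}=\widetilde{Y}=\{\widetilde{u}\}$ spans no edge of~$\widetilde{G}$, yet $\bigcup\widetilde{X}=\bigcup\widetilde{Y}=\{u,v\}$ spans the edge~$uv$. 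So part (1), read literally, is false and no proof can rescue it. The paper's own one-line justification of (1) is equally silent on this point; in practice the statement is only invoked (in \cref{lemma:quotientseparations}) with~$\widetilde{X}$ and~$\widetilde{Y}$ disjoint, namely~$\widetilde{X}=\widetilde{A}\setminus\widetilde{B}$ and~$\widetilde{Y}=\widetilde{B}\setminus\widetilde{A}$, in which case the shared-class branch of your part-(3) argument is vacuous and your plan does go through cleanly. The honest fix is to add the hypothesis~$\widetilde{X}\cap\widetilde{Y}=\emptyset$ to part (1) rather than to search for a cleverer case analysis.
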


\begin{proof}
    Let~${x \in X}$ and~${y \in Y}$ with~${x \not\sim_{\mathcal{K}(G)} y}$. 
    By~(\cref{eq:ast}), ${xy \in E(G)}$ if and only if~${\widetilde{x}\widetilde{y} \in E(\widetilde{G})}$. 
    And since~${x \in \bigcup \widetilde{X}}$ and~${y \in \bigcup \widetilde{Y}}$, \ref{obs:GtildEedges:1} follows. 
    For~\ref{obs:GtildEedges:2}, if~${\widetilde{x}\widetilde{y} \in E(\widetilde{G})}$ for some~${\widetilde{x} \in \widetilde{X}}$ and~${\widetilde{y} \in \widetilde{Y}}$, then by definition there is an~${x' \in X \cap \widetilde{x}}$ and~${y' \in Y \cap \widetilde{Y}}$, and it follows that~${xy \in E(G)}$. 
    Finally, if~${xy \in E(G)}$ for some~${x \in X}$ and~${y \in Y}$, then either~${x \not \sim_{\mathcal{K}(G)} y}$ and hence~${\widetilde{x}\widetilde{y} \in E(\widetilde{G})}$ by~(\cref{eq:ast}), or~${x \sim_{\mathcal{K}(G)} y}$ and hence~$\widetilde{x} \in \widetilde{X} \cap \widetilde{Y}$, proving \ref{obs:GtildEedges:3}.
\end{proof}

For a maximal clique~${K \in \mathcal{K}(G)}$, we call the number of equivalence classes of~$\sim_{\mathcal{K}(G)}$ that intersect~$K$ the \emph{incidence-diversity} of~$K$. 
Then, the \emph{clique-incidence-diversity} of~$G$ is the maximum incidence-diversity over all~${K \in \mathcal{K}(G)}$, which we denote by~$\reduced{\omega}{G}$. 

For a vertex~${v \in V(G)}$, we call the number of maximal cliques $K\in\mathcal{K}(G)$ with $v\in K$ the \emph{clique-incidence-degree} of~$v$.
The \emph{clique-incidence-degree} of~$G$ is the maximum clique-incidence-degree over all~${v \in V(G)}$, which we denote by~$\cideg{G}$. 
Note that~$\cideg{G}$ is equal to the maximum degree of the hypergraph~${\mathcal{K}(G)}$. 

A key observation on the behaviour of~$\sim_{\mathcal{K}(G)}$ is that taking its quotient preserves a lot of the original structure of~$G$ from the perspective of maximal cliques.
We illustrate this behaviour with the following observations.

\begin{lemma}
    \label{lem:clique-quotient-graph}
    For every graph~$G$, the map~${\widetilde{\pi} \colon V(G) \to V(\widetilde{G})}$ that maps every vertex~$v$ of~$G$ to its equivalence class~$\widetilde{v}$ with respect to~$\sim_{\mathcal{K}(G)}$ induces a bijection~$\kappa$ between~${\mathcal{K}(G)}$ and~${\mathcal{K}(\widetilde{G})}$. 
    
    In particular, 
    \begin{enumerate}
        [label=(\arabic*)]
        \item \label{item:lem:clique-quotient-graph1} $\kappa$ is an isomorphism between~$\linegraph{\mathcal{K}(G)}$ and~$\linegraph{\mathcal{K}(\widetilde{G})}$, 
    \end{enumerate}
    and, for every set~$R$ of representatives of the equivalence classes of~$\sim_{\mathcal{K}(G)}$, 
    \begin{enumerate}
        [label=(\arabic*), resume]
        \item \label{item:lem:clique-quotient-graph2} the restriction of~$\widetilde{\pi}$ to~$R$ is an isomorphism between the hypergraph ${\mathcal{K}(\widetilde{G})}$ and the subhypergraph of~$\mathcal{K}(G)$ induced by~$R$, and
        \item \label{item:lem:clique-quotient-graph3} the restriction of~$\widetilde{\pi}$ to~$R$ is an isomorphism between $\widetilde{G}$ and~$G[R]$. 
    \end{enumerate}
\end{lemma}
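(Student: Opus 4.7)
The plan is to first verify that each equivalence class of $\sim_{\mathcal{K}(G)}$ forms a clique in $G$, and then use this together with~\eqref{eq:ast} to establish that the induced map $\kappa\colon K \mapsto \widetilde{K}$ is a well-defined bijection between $\mathcal{K}(G)$ and $\mathcal{K}(\widetilde{G})$; from this bijection, the three isomorphism claims will follow directly.

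To see that equivalent vertices are adjacent, note that every vertex of $G$ lies in some maximal clique, so if $v \sim_{\mathcal{K}(G)} w$ with $v \neq w$, then some $K \in \mathcal{K}(G)$ contains both $v$ and $w$ and hence $vw \in E(G)$. With this in hand, I would verify that $\kappa$ is well-defined by fixing $K \in \mathcal{K}(G)$ and checking that $\widetilde{K}$ is a clique (any two distinct elements of $\widetilde{K}$ arise from non-equivalent vertices of $K$, so~\eqref{eq:ast} yields adjacency in $\widetilde{G}$) and that it is maximal (if $\widetilde{u} \notin \widetilde{K}$ were adjacent to every vertex of $\widetilde{K}$, then any representative $u$ would satisfy $u \notin K$ and $u \not\sim_{\mathcal{K}(G)} v$ for each $v \in K$, so~\eqref{eq:ast} would extend $K$, contradicting maximality). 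Injectivity is immediate, since $v \sim_{\mathcal{K}(G)} w$ and $w \in K$ force $v \in K$, so $\widetilde{K_1} = \widetilde{K_2}$ forces $K_1 = K_2$. For surjectivity, given $\widetilde{L} \in \mathcal{K}(\widetilde{G})$, I would argue that $\bigcup \widetilde{L}$ is a clique in $G$ (combining~\eqref{eq:ast} for non-equivalent pairs with the clique-module observation above for equivalent ones), whose maximality follows by the symmetric argument.

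Once $\kappa$ is established, the three isomorphism claims follow quickly. For~\ref{item:lem:clique-quotient-graph1}, I need $K_1 \cap K_2 \neq \emptyset$ if and only if $\widetilde{K_1} \cap \widetilde{K_2} \neq \emptyset$; the forward direction is trivial, and for the reverse any common class $\widetilde{v} \in \widetilde{K_1} \cap \widetilde{K_2}$ has a representative $v_1 \in K_1$ which, being equivalent to some $v_2 \in K_2$, itself lies in $K_2$. Claim~\ref{item:lem:clique-quotient-graph3} is then immediate from~\eqref{eq:ast}, since distinct vertices of $R$ are non-equivalent by definition of a set of representatives, and $\widetilde{\pi}|_R$ is a bijection onto $V(\widetilde{G})$ by construction. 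For~\ref{item:lem:clique-quotient-graph2}, the key observation is that equivalent vertices lie in the same maximal cliques, so for each $K \in \mathcal{K}(G)$ the set $K \cap R$ contains exactly one representative of each $\sim_{\mathcal{K}(G)}$-class meeting $K$, giving $\widetilde{\pi}(K \cap R) = \widetilde{K} = \kappa(K)$ and hence the desired hypergraph isomorphism.

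I do not anticipate any substantive obstacle; the entire argument rests on the two facts that equivalent vertices are indistinguishable in terms of adjacency (via~\eqref{eq:ast}) and in terms of membership in maximal cliques (by the definition of~$\sim_{\mathcal{K}(G)}$). The only point requiring consistent care is, at several junctures, confirming that a class $\widetilde{u} \notin \widetilde{K}$ indeed has all its representatives outside $K$, which is itself a direct consequence of the latter fact.
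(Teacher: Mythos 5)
Your proposal is correct and follows essentially the same route as the paper: use~(\cref{eq:ast}) to show that $K \mapsto \widetilde{K}$ sends maximal cliques to maximal cliques, use the fact that each equivalence class is a clique to show $\widetilde{L} \mapsto \bigcup \widetilde{L}$ goes the other way, and then derive all three isomorphism claims from the resulting bijection and the fact that equivalent vertices lie in exactly the same maximal cliques. You spell out injectivity and the well-definedness/maximality checks a bit more explicitly than the paper, which treats those steps as immediate, but there is no substantive difference in the argument.
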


\begin{proof}
    By (\cref{eq:ast}), ${\widetilde{\pi}(K) = \widetilde{K}}$
    induces a maximal clique of~$\widetilde{G}$ for every~${K \in \mathcal{K}(G)}$. 
    On the other hand, $\widetilde{x}$ induces a clique for each~${x \in V(G)}$. 
    So for each~${\widetilde{K} \in \mathcal{K}(\widetilde{G})}$, the union~$\bigcup \widetilde{K}$ is a clique of~$G$. 
    Straightforwardly, the maximality of~$\widetilde{G}[\widetilde{K}]$ implies the maximality of~$G[\bigcup \widetilde{K}]$. 

    Clearly, for~${K, K' \in \mathcal{K}(G)}$ we observe that~${K \cap K' = \emptyset}$ if and only if~${\widetilde{K} \cap \widetilde{K'} = \emptyset}$, and hence~$\linegraph{\mathcal{K}(G)}$ is isomorphic to~$\linegraph{\mathcal{K}(\widetilde{G})}$. 

    Lastly, observe that if~$R$ is a set of representatives of the equivalence classes with respect to~$\sim_{\mathcal{K}(G)}$, then the restriction of~$\widetilde{\pi}$ to~$R$ is a bijection, and hence, by the reasoning above, an isomporphism between the subhypergraph of~$\mathcal{K}(G)$ induced by~$R$ and~$\mathcal{K}(\widetilde{G})$, as well as~$\widetilde{G}$ and~$G[R]$. 
\end{proof}

A straightforward consequence of this lemma is that the clique-incidence-diversity~$\reduced{\omega}{G}$ of a graph~$G$ is equal to the clique number~$\omega(\widetilde{G})$ of~$\widetilde{G}$.
Indeed, it follows that several graph parameters on~$G$ and~$\widetilde{G}$ are equal, as we summarise in the following corollary.

\begin{corollary}
    \label{cor:Gtildeproperties}
    Let~$G$ be a graph.
    \begin{enumerate}
        [label=(\arabic*)]
        \item \label{item:cor:Gtildeproperties1} ${\reduced{\omega}{G} = \omega(\widetilde{G})}$. 
        \item \label{item:cor:Gtildeproperties2} ${\cideg{G} = \cideg{\widetilde{G}}}$.
        \item \label{item:cor:Gtildeproperties3} ${\alpha(G) = \alpha(\widetilde{G})}$ 
            and~${\local{\alpha}{G} = \local{\alpha}{\widetilde{G}}}$.
        \item \label{item:cor:Gtildeproperties5} ${\theta(G) = \theta(\widetilde{G})}$ 
            and~${\local{\theta}{G} = \local{\theta}{\widetilde{G}}}$.
    \end{enumerate}
\end{corollary}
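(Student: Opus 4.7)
Each of the four equalities follows from the fact that the quotient map~$\widetilde{\pi}$ interacts well with cliques and independent sets, as captured by \cref{obs:GtildeEdges,lem:clique-quotient-graph}. The underlying principle is that every equivalence class~$\widetilde{v}$ is a clique module of~$G$: an independent set picks up at most one representative per class, while any set of vertices projecting to a clique of~$\widetilde{G}$ lifts to a clique of~$G$.

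The first two equalities are direct consequences of \cref{lem:clique-quotient-graph}. The bijection~$\kappa$ sends a maximal clique~$K$ of~$G$ to~$\widetilde{K}$, whose cardinality is exactly the number of equivalence classes of~$\sim_{\mathcal{K}(G)}$ meeting~$K$, that is, the incidence-diversity of~$K$; taking the maximum gives~${\reduced{\omega}{G} = \omega(\widetilde{G})}$. Similarly, for each~${v \in V(G)}$, the equivalence ${v \in K}$ if and only if~${\widetilde{v} \in \widetilde{K}}$, combined with~$\kappa$, shows that~$v$ and~$\widetilde{v}$ belong to the same number of maximal cliques; since~$\widetilde{\pi}$ is surjective onto~${V(\widetilde{G})}$, the maxima over~$V(G)$ and~$V(\widetilde{G})$ agree.

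For the equalities involving~$\alpha$ and~$\theta$, I would first establish the auxiliary fact that for any~${X \subseteq V(G)}$ satisfying~${\bigcup \widetilde{X} = X}$ (that is, $X$ is a union of equivalence classes of~$\sim_{\mathcal{K}(G)}$), one has~${\alpha(G[X]) = \alpha(\widetilde{G}[\widetilde{X}])}$ and~${\theta(G[X]) = \theta(\widetilde{G}[\widetilde{X}])}$. For~$\alpha$: an independent set~${I \subseteq X}$ in~$G$ injects into~$\widetilde{X}$ via~$\widetilde{\pi}$ (since classes are cliques) and the image is independent in~$\widetilde{G}$ by~\cref{obs:GtildeEdges}; conversely, any independent set in~$\widetilde{G}[\widetilde{X}]$ lifts to an equally large independent set in~$G[X]$ by selecting one representative per class. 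For~$\theta$: any clique partition of~$X$ projects to a clique partition of~$\widetilde{X}$ of no larger size; conversely, given a clique partition~${\{\widetilde{C}_1, \ldots, \widetilde{C}_k\}}$ of~$\widetilde{X}$, the sets~${C_i = \bigcup \widetilde{C}_i}$ are cliques of~$G$ (two vertices from distinct classes inside the same~$\widetilde{C}_i$ are adjacent by~\cref{obs:GtildeEdges}, while each class is itself a clique) and they partition~$X$. Applying this fact with~${X = V(G)}$ immediately yields~${\alpha(G) = \alpha(\widetilde{G})}$ and~${\theta(G) = \theta(\widetilde{G})}$.

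For the local versions, the remaining task is to verify the identity~${N_G[v] = \bigcup N_{\widetilde{G}}[\widetilde{v}]}$, so that the auxiliary fact applies to~${X = N_G[v]}$ with~${\widetilde{X} = N_{\widetilde{G}}[\widetilde{v}]}$. The inclusion ``$\subseteq$'' follows because any~${w \in N_G[v]}$ either satisfies~${w \sim_{\mathcal{K}(G)} v}$ (so~${\widetilde{w} = \widetilde{v}}$) or satisfies~${wv \in E(G)}$ with~${w \not\sim_{\mathcal{K}(G)} v}$ (so~${\widetilde{w}\widetilde{v} \in E(\widetilde{G})}$ by~$(\ast)$); the reverse inclusion uses that all representatives of an equivalence class share the same closed neighbourhood in~$G$ and that true twins are automatically adjacent. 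Granted this identity, we obtain~${\alpha(G[N_G[v]]) = \alpha(\widetilde{G}[N_{\widetilde{G}}[\widetilde{v}]])}$ and the analogous statement for~$\theta$, and maximising over~$v$ (equivalently over~$\widetilde{v}$, by surjectivity of~$\widetilde{\pi}$) yields the local equalities. The only subtle point worth isolating is this closed-neighbourhood identity, since it relies crucially on true-twin classes being themselves cliques; everything else is routine bookkeeping with~$\widetilde{\pi}$.
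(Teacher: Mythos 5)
Your proof is correct and uses the same basic machinery as the paper's --- the bijection~$\kappa$ between $\mathcal{K}(G)$ and $\mathcal{K}(\widetilde{G})$, and the fact that equivalence classes of~$\sim_{\mathcal{K}(G)}$ are cliques --- but two of your arguments are cleaner than the paper's. For property~(2) you count directly: since $v \in K$ if and only if $\widetilde{v} \in \widetilde{K}$, the bijection~$\kappa$ restricts to a bijection between the maximal cliques through~$v$ and those through~$\widetilde{v}$, and surjectivity of~$\widetilde{\pi}$ finishes. The paper instead invokes the assertion that the maximum degree of a hypergraph equals the clique number of its linegraph; that statement is not true for general hypergraphs (pairwise-intersecting hyperedges need not share a common vertex), so your direct argument is the one that actually goes through without further justification. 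For the local variants, you isolate the identity~$N_G[v] = \bigcup N_{\widetilde{G}}[\widetilde{v}]$ and prove the auxiliary claim ${\alpha(G[X]) = \alpha(\widetilde{G}[\widetilde{X}])}$, ${\theta(G[X]) = \theta(\widetilde{G}[\widetilde{X}])}$ for class-closed sets~$X$; the paper merely declares that the local equalities ``immediately follow.'' Your formulation makes explicit the bookkeeping that is genuinely needed here, since one cannot just re-apply the global equalities to the induced subgraph~$G[N_G[v]]$ --- the clique-equivalence of that subgraph is not the restriction of~$\sim_{\mathcal{K}(G)}$, so the transfer must be argued at the level of sets, exactly as you do. Substantively the two proofs agree; yours is the more careful one at these two points.
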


\begin{proof}
    The bijection~$\kappa$ from \cref{lem:clique-quotient-graph} maps a maximal clique~${K \in \mathcal{K}(G)}$ to~${\widetilde{K}}$. 
    Since the latter has exactly as many vertices as~$K$ contains equivalence classes of~$\sim_{\mathcal{K}(G)}$, it follows that~${\reduced{\omega}{G} = \omega(\widetilde{G})}$. 

    Property~\ref{item:cor:Gtildeproperties2} follows immediately from \cref{lem:clique-quotient-graph}\ref{item:lem:clique-quotient-graph1} together with the fact that the maximum degree of a hypergraph is equal to the clique number of its linegraph. 

    Since every independent set of~$G$ contains at most one vertex from each equivalent class of~$\sim_{\mathcal{K}(G)}$, it follows from \cref{lem:clique-quotient-graph}\ref{item:lem:clique-quotient-graph3} that~${\alpha(G) = \alpha(\widetilde{G})}$. 
    Similarly, as we may assume 
    a clique-cover to consist only of maximal cliques, it follows from \cref{lem:clique-quotient-graph}\ref{item:lem:clique-quotient-graph1} that~${\theta(G) = \theta(\widetilde{G})}$. 
    From these observations, it immediately follows that the local variants of both of these parameters are also equal on~$G$ and~$\widetilde{G}$. 
\end{proof}

For any graph parameter~$\mu$, let~$\reduced{\mu}{}$ be the graph parameter defined by~${\reduced{\mu}{G} \coloneqq \mu(\widetilde{G})}$. 
In particular, let us turn our attention to the maximum degree of the clique-quotient graph, that is, the graph parameter~$\reduced{\Delta}{}$, and its interaction with both the clique-incidence-diversity and the clique-incidence-degree.

\begin{lemma}
    \label{lem:cidcidegdelta}
    For every graph~$G$, 
    \begin{enumerate}
        [label=(\arabic*)]
        \item ${\reduced{\omega}{G} \leq \reduced{\Delta}{G}}$, unless~$\widetilde{G}$ is edgeless (in which case~${\reduced{\omega}{G} \leq 1}$ and~${\reduced{\Delta}{G} = 0}$), 
        \item ${\cideg{G} \leq 3^{\lceil{\reduced{\Delta}{G}/3}\rceil}}$, and
        \item ${\reduced{\Delta}{G} \leq \cideg{G} \cdot (\reduced{\omega}{G} - 1)}$. 
    \end{enumerate}
\end{lemma}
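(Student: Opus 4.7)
The overall plan is to translate everything into statements about the clique-quotient graph $\widetilde{G}$ via \cref{cor:Gtildeproperties}, which gives us $\reduced{\omega}{G}=\omega(\widetilde{G})$, $\cideg{G}=\cideg{\widetilde{G}}$, and $\reduced{\Delta}{G}=\Delta(\widetilde{G})$. A crucial preliminary observation is that $\widetilde{G}$ has no true twins: if $\widetilde{v}\neq\widetilde{w}$ were true twins in $\widetilde{G}$, then by \cref{lem:clique-quotient-graph}\ref{item:lem:clique-quotient-graph1} their representatives $v,w$ would lie in exactly the same maximal cliques of $G$, contradicting $v\not\sim_{\mathcal{K}(G)}w$. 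I will use this no-twin property to get the strict improvement needed in (1).

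For (1), I will show $\omega(\widetilde{G})\leq\Delta(\widetilde{G})$ whenever $\widetilde{G}$ has an edge. Let $k:=\omega(\widetilde{G})$ and pick a maximum clique $K\subseteq V(\widetilde{G})$. Trivially $\Delta(\widetilde{G})\geq k-1$. If equality held and $k\geq 2$, then every vertex $v\in K$ would have exactly $k-1$ neighbours, which must be exactly $K\setminus\{v\}$; but then $N_{\widetilde{G}}[v]=K$ for every $v\in K$, making all vertices of $K$ mutual true twins—contradiction. The case $k\leq 1$ is handled by the hypothesis that $\widetilde{G}$ has an edge (giving $\Delta(\widetilde{G})\geq 1\geq k$) and the isolated-case side remark.

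For (3), I will count degrees cliquewise. Fix $\widetilde{v}\in V(\widetilde{G})$. Every neighbour $\widetilde{u}$ of $\widetilde{v}$ lies in some maximal clique of $\widetilde{G}$ containing $\widetilde{v}$ (extend the edge $\widetilde{v}\widetilde{u}$ to one). There are at most $\cideg{\widetilde{G}}$ such maximal cliques, and each contains at most $\omega(\widetilde{G})-1$ neighbours of $\widetilde{v}$, so $\deg_{\widetilde{G}}(\widetilde{v})\leq\cideg{\widetilde{G}}\cdot(\omega(\widetilde{G})-1)$; taking the maximum over $\widetilde{v}$ and applying \cref{cor:Gtildeproperties} finishes the inequality.

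For (2), which I expect to be the main step, I will note that the maximal cliques of $\widetilde{G}$ through a vertex $\widetilde{v}$ are in bijection with the maximal cliques of the induced subgraph $\widetilde{G}[N_{\widetilde{G}}(\widetilde{v})]$ (each such clique together with $\widetilde{v}$ gives a maximal clique through $\widetilde{v}$, and conversely). Since $|N_{\widetilde{G}}(\widetilde{v})|\leq\Delta(\widetilde{G})$, the Moon–Moser theorem bounds the number of maximal cliques of any graph on $n$ vertices by $3^{n/3}$, so the count is at most $3^{\Delta(\widetilde{G})/3}\leq 3^{\lceil\Delta(\widetilde{G})/3\rceil}$. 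Taking the maximum over $\widetilde{v}\in V(\widetilde{G})$ yields $\cideg{\widetilde{G}}\leq 3^{\lceil\reduced{\Delta}{G}/3\rceil}$, and translating back via \cref{cor:Gtildeproperties} gives the claim. The only genuine subtlety is making sure one actually invokes Moon–Moser on the right subgraph, which is why I first justified the bijection between maximal cliques through $\widetilde{v}$ in $\widetilde{G}$ and maximal cliques in $\widetilde{G}[N_{\widetilde{G}}(\widetilde{v})]$.
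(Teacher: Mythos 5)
Your proof is correct and follows essentially the same approach as the paper's: all three parts reduce to $\widetilde{G}$ via \cref{cor:Gtildeproperties}, part (2) applies Moon--Moser to $\widetilde{G}[N_{\widetilde{G}}(\widetilde{v})]$ exactly as the paper does, and part (3) counts neighbours cliquewise in the same way. The only cosmetic difference is in (1), where you derive the contradiction from the preliminary observation that $\widetilde{G}$ has no true twins, whereas the paper phrases the same contradiction by noting that $\widetilde{G}[\widetilde{K}]$ would be a component whose vertices all lie in exactly one maximal clique; these are two ways of saying the same thing.
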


\begin{proof}
    Clearly, ${\reduced{\omega}{G} \leq \reduced{\Delta}{G} + 1}$. 
    Suppose~$G$ is a graph for which~${k \coloneqq \reduced{\omega}{G} = \reduced{\Delta}{G} + 1}$. 
    In particular, $\widetilde{G}$ contains a clique~$\widetilde{K}$ of size~$k$. 
    Since~${\Delta(\widetilde{G}) = k - 1}$, each vertex in~$\widetilde{K}$ has degree~${k-1}$ in~$\widetilde{G}$, and hence~${\widetilde{G}[\widetilde{K}]}$ is a component of~$\widetilde{G}$. 
    But since no other maximal clique of~$G$ intersects~${K \coloneqq \bigcup\widetilde{K}}$, all vertices in~$K$ are equivalent, and hence~${k = 1}$. 
    It follows that~${\Delta(\widetilde{G}) = 0}$, and hence~$\widetilde{G}$ is edgeless. 
    
    To show that $\cideg{G} \leq 3^{\lceil{\reduced{\Delta}{G}}/3\rceil}$, first recall that~$\cideg{G} = \cideg{\widetilde{G}}$, see \cref{cor:Gtildeproperties}. 
    Moreover, for every vertex~${\widetilde{v} \in V(\widetilde{G})}$, every~${\widetilde{K} \in \mathcal{K}(\widetilde{G})}$ that contains~$\widetilde{v}$ is contained in~$N_{\widetilde{G}}[\widetilde{v}]$. 
    In particular, the number of maximal cliques that contain~$\widetilde{v}$ is equal to the number of maximal cliques of~${G[N_{\widetilde{G}}(\widetilde{v})]}$. 
    Using the bound of Moon and Moser~\cite{MoonM1965cliques} that an~$n$-vertex graph contains at most~$3^{\lceil n/3 \rceil}$ maximal cliques, we obtain the desired bound. 

    For the third inequality, consider a vertex~${\widetilde{v} \in V(\widetilde{G})}$. 
    By \cref{lem:clique-quotient-graph}, $\widetilde{v}$ is adjacent to exactly those~$\widetilde{w}$ for which there is a maximal clique~${K \in \mathcal{K}(G)}$ with~${\widetilde{v} \cup \widetilde{w} \subseteq K}$. 
    And since there are at most~$\cideg{G}$ such maximal cliques, each of which contains at most~$(\reduced{\omega}{G} - 1)$ equivalence classes that are distinct from~$\widetilde{v}$, the result follows. 
\end{proof}

First, let us observe that all these inequalities are tight. 
For the first inequality, consider~$\matchingKI_n$, where~${\reduced{\omega}{\matchingKI_n} = \reduced{\Delta}{\matchingKI_n} = n}$. 
For the second inequality, consider the graph~$G$ obtained from a complete multipartite graph with~$n$ partition classes, each of size~$3$, by adding a universal vertex. 
Clearly, this graph has~$3^n$ maximal cliques (cf.~Moon and Moser~\cite{MoonM1965cliques}) which, furthermore, distinguish all vertices with respect to~$\sim_{\mathcal{K}(G)}$. 
Hence~${G = \widetilde{G}}$ and hence~${\reduced{\Delta}{G} = \Delta(G) = 3n}$. 
For the third inequality, observe that~${\reduced{\Delta}{\Star_n} = \Delta(\Star_n) = n = \cideg{\Star_n}}$ and that~${\reduced{\omega}{\Star_n} = \omega(\Star_n) = 2}$.

The first two inequalities from the previous lemma imply that~${\maxP{\cideg{},\reduced{\omega}{}}{G} \leq 3^{\lceil{\reduced{\Delta}{G}/3}\rceil}+1}$ for all graphs~$G$.
So in particular, the class of all graphs with~${\maxP{\cideg{},\reduced{\omega}{}}{}}$ at most~${3^{\lceil{d/3}\rceil}+1}$ is strictly more general than the class of graph of maximum degree at most~$d$. 
Moreover, $\matchingKI_n$ is a graph with~${\cideg{\matchingKI_n} = 2}$ and~${\reduced{\Delta}{\matchingKI_n} = \reduced{\omega}{\matchingKI_n} = n}$, while~$\Star_n$ is a graph with~${\reduced{\omega}{\Star_n} = 2}$ and~${\reduced{\Delta}{\Star_n} = \cideg{\Star_n} = n}$. 
Hence, bounding either~$\cideg{}$ or~$\reduced{\omega}{}$ while not bounding the other leads to a class strictly less general than bounding~$\reduced{\Delta}{}$. 
From the first inequality of the previous lemma, we observe that~$\reduced{\Delta}{}$ and~$\maxP{\reduced{\Delta}{},\reduced{\omega}{}}{}$ are equivalent. 
Together with the third inequality, we observe that~$\reduced{\Delta}{}$ is equivalent to~$\maxP{\cideg{},\reduced{\omega}{}}{}$. 

\begin{corollary}
    \label{cor:cid-cideg-Deltatilde}
    The parameters~$\reduced{\Delta}{}$ and~$\maxP{\cideg{},\reduced{\omega}{}}{}$ are equivalent. 
    \qed
\end{corollary}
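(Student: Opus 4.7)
The plan is to assemble the three inequalities of \cref{lem:cidcidegdelta} into the two bounds required by the definition of asymptotic equivalence. Write $\mathsf{p} = \reduced{\Delta}{}$ and $\mathsf{q} = \maxP{\cideg{},\reduced{\omega}{}}{}$; what we need is a pair of non-decreasing functions $f,g \colon \N \to \N$ satisfying $f(\mathsf{p}(G)) \leq \mathsf{q}(G) \leq g(\mathsf{p}(G))$ for every graph $G$.

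For the upper bound on $\mathsf{q}(G)$, I would combine inequality~(1) of the lemma, giving $\reduced{\omega}{G} \leq \reduced{\Delta}{G} + 1$ in all cases (the degenerate case where $\widetilde{G}$ is edgeless is handled by the parenthetical part, which still yields $\reduced{\omega}{G} \leq 1$), with inequality~(2), giving $\cideg{G} \leq 3^{\lceil \reduced{\Delta}{G}/3 \rceil}$. Taking the maximum of the two right-hand sides yields
\[
\mathsf{q}(G) \;=\; \max\{\cideg{G}, \reduced{\omega}{G}\} \;\leq\; 3^{\lceil \reduced{\Delta}{G}/3 \rceil} + 1,
\]
so we may set $g(x) \coloneqq 3^{\lceil x/3 \rceil} + 1$.

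For the lower bound on $\mathsf{q}(G)$, I would invoke inequality~(3) of the lemma:
\[
\reduced{\Delta}{G} \;\leq\; \cideg{G} \cdot (\reduced{\omega}{G} - 1) \;\leq\; \mathsf{q}(G)^{2},
\]
so taking $f(x) \coloneqq \lceil \sqrt{x} \rceil$ yields $f(\mathsf{p}(G)) \leq \mathsf{q}(G)$. Together these two inequalities witness the asymptotic equivalence of $\reduced{\Delta}{}$ and $\maxP{\cideg{},\reduced{\omega}{}}{}$.

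There is no genuine obstacle here: the corollary is essentially a repackaging of the three inequalities already proved in \cref{lem:cidcidegdelta}. The only step worth verifying is that the edge case in inequality~(1) does not spoil the upper bound, which is immediate since in that case $\mathsf{q}(G) \leq 1 \leq g(0)$.
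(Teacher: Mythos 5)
Your proof is correct and follows essentially the same route as the paper: the corollary is obtained by packaging the three inequalities of \cref{lem:cidcidegdelta} into an upper bound $\maxP{\cideg{},\reduced{\omega}{}}{G} \leq 3^{\lceil\reduced{\Delta}{G}/3\rceil}+1$ (from (1) and (2)) and a lower bound $\reduced{\Delta}{G} \leq \maxP{\cideg{},\reduced{\omega}{}}{G}^2$ (from (3)), exactly as the preceding paragraph of the paper does before stating the corollary with \qed.
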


\section{An approximate Menger theorem and an induced grid theorem on graphs with bounded clique-incidence-diversity and bounded clique-incidence-degree}

Let us first prove the generalisation of \cref{thm:gartlandinducedmenger} to graphs of bounded $\maxP{\cideg{},\reduced{\omega}{}}{}$. 
To that end, we first investigate how separations and paths relate between~$G$ and~$\widetilde{G}$ in the following two lemmas. 

\begin{lemma}
    \label{lemma:quotientseparations}
    Let~$k$ be a positive integer.
    For any graph~$G$, 
    \begin{enumerate}
        [label=(\arabic*)]
        \item if $(A,B)$ is a separation of~$G$ of $\theta$-order~$k$, then~$(\widetilde{A}, \widetilde{B})$ is a separation of~$\widetilde{G}$ of order at most~$k \cdot \reduced{\omega}{G}$, and
        \item if $(\widetilde{A},\widetilde{B})$ is a separation in~$\widetilde{G}$ of order~$k$, then $(\bigcup\widetilde{A},\bigcup\widetilde{B})$ is a separation in $G$ of $\theta$-order at most~$k$. 
    \end{enumerate}
\end{lemma}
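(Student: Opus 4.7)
The plan is to handle both directions by carefully translating between vertex sets in $G$ and $\widetilde{G}$, using \cref{obs:GtildeEdges} (and the underlying equation~(\cref{eq:ast})) to pass edges back and forth, and using the fact that the members of an equivalence class of $\sim_{\mathcal{K}(G)}$ form a clique (true twins are adjacent).

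First I would prove (1). The key observation is that if $\widetilde{x} \in \widetilde{A} \setminus \widetilde{B}$, then every representative of $\widetilde{x}$ lies in $A \setminus B$, since if some representative were in $B$, that would put $\widetilde{x} \in \widetilde{B}$; symmetrically for $\widetilde{B} \setminus \widetilde{A}$. So if there were an edge of $\widetilde{G}$ between $\widetilde{A} \setminus \widetilde{B}$ and $\widetilde{B} \setminus \widetilde{A}$, then \cref{obs:GtildeEdges}\ref{obs:GtildEedges:2} would produce an edge of $G$ between $A \setminus B$ and $B \setminus A$, contradicting that $(A,B)$ is a separation. So $(\widetilde{A},\widetilde{B})$ is a separation of $\widetilde{G}$.

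For the order bound, set $S \coloneqq A \cap B$. I claim $\widetilde{A} \cap \widetilde{B} \subseteq \widetilde{S}$. Indeed, if $\widetilde{x} \in \widetilde{A} \cap \widetilde{B}$ has no representative in $S$, then it contains some $a \in A \setminus B$ and some $b \in B \setminus A$; but $a$ and $b$ are then true twins and hence adjacent in $G$, giving an edge across $(A,B)$, a contradiction. Now take an optimal clique-cover $C_1, \dots, C_k$ of $G[S]$ (so $k = \theta(G[S])$). Each $C_i$ is contained in some $K_i \in \mathcal{K}(G)$, and $\widetilde{K_i}$ is a maximal clique of $\widetilde{G}$ by \cref{lem:clique-quotient-graph}, so $|\widetilde{C_i}| \le |\widetilde{K_i}| \le \reduced{\omega}{G}$. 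Hence $|\widetilde{A} \cap \widetilde{B}| \le |\widetilde{S}| \le \sum_{i=1}^{k} |\widetilde{C_i}| \le k \cdot \reduced{\omega}{G}$, as required.

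For (2), set $A^{\ast} \coloneqq \bigcup \widetilde{A}$ and $B^{\ast} \coloneqq \bigcup \widetilde{B}$. By construction, $A^{\ast} \setminus B^{\ast} = \bigcup(\widetilde{A} \setminus \widetilde{B})$ and $B^{\ast} \setminus A^{\ast} = \bigcup(\widetilde{B} \setminus \widetilde{A})$. If $vw \in E(G)$ with $v \in A^{\ast} \setminus B^{\ast}$ and $w \in B^{\ast} \setminus A^{\ast}$, then $\widetilde{v} \ne \widetilde{w}$, so $v \not\sim_{\mathcal{K}(G)} w$ and by~(\cref{eq:ast}) we get $\widetilde{v}\widetilde{w} \in E(\widetilde{G})$ between $\widetilde{A} \setminus \widetilde{B}$ and $\widetilde{B} \setminus \widetilde{A}$, contradicting that $(\widetilde{A},\widetilde{B})$ is a separation. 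Finally, $A^{\ast} \cap B^{\ast} = \bigcup(\widetilde{A} \cap \widetilde{B})$, and each equivalence class $\widetilde{x}$ of $\sim_{\mathcal{K}(G)}$ induces a clique in $G$ (true twins are adjacent), so the family $\{\bigcup \widetilde{x} : \widetilde{x} \in \widetilde{A} \cap \widetilde{B}\}$ is a clique cover of $G[A^{\ast} \cap B^{\ast}]$ of size $|\widetilde{A} \cap \widetilde{B}| \le k$, giving $\theta(G[A^{\ast} \cap B^{\ast}]) \le k$.

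The only subtle step is the inclusion $\widetilde{A} \cap \widetilde{B} \subseteq \widetilde{S}$ in part (1); everything else is a direct book-keeping argument once the correspondence between classes and their preimages is set up. That inclusion is really what forces the factor of $\reduced{\omega}{G}$ (not a larger one), because it lets us blow up only the cliques in a cover of the actual separator $S$ rather than of all of $\bigcup(\widetilde{A} \cap \widetilde{B})$.
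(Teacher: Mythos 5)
Your proof is correct and follows essentially the same route as the paper's: translate cliques via \cref{lem:clique-quotient-graph} and the incidence-diversity bound for part~(1), and use that equivalence classes are cliques for part~(2). One noteworthy difference: you make explicit, and correctly justify, the inclusion $\widetilde{A}\cap\widetilde{B}\subseteq\widetilde{A\cap B}$ (via the true-twins-are-adjacent argument). The paper's proof of part~(1) moves directly from a clique-cover of $G[A\cap B]$ to asserting a clique-cover of $\widetilde{G}[\widetilde{A}\cap\widetilde{B}]$, silently relying on that inclusion; your version fills in this step cleanly, which is a genuine improvement in rigour even though the overall strategy is identical.
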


\begin{proof}
    By \cref{obs:GtildeEdges}\ref{obs:GtildEedges:3} and since~$\widetilde{A} \setminus \widetilde{B}$ and~$\widetilde{B} \setminus \widetilde{A}$ are disjoint, if~$(A,B)$ is a separation of~$G$, then~$(\widetilde{A}, \widetilde{B})$ is a separation of~$\widetilde{G}$. 
    More straight-forwardly, by \cref{obs:GtildeEdges}\ref{obs:GtildEedges:1}, if~$(\widetilde{A},\widetilde{B})$ is a separation in~$\widetilde{G}$, then  $(\bigcup\widetilde{A},\bigcup\widetilde{B})$ is a separation of~$G$. 
    Moreover, if there is an edge between~${\bigcup \widetilde{A} \setminus \bigcup \widetilde{B}}$ and~${\bigcup \widetilde{B} \setminus \bigcup \widetilde{A}}$, then there is an edge between~${A \setminus B}$ and~${B \setminus A}$.
    So in particular, if~$(\widetilde{A},\widetilde{B})$ is a separation of~$\widetilde{G}$, then~$(\bigcup\widetilde{A},\bigcup\widetilde{B})$ is a separation of~$G$, and if~$(A,B)$ is a separation of~$G$, then~$(\widetilde{A},\widetilde{B})$ is a separation of~$\widetilde{G}$. 

    So let~${(A,B)}$ be a separation of~$G$ with~${\theta(G[A \cap B]) = k}$, and let~${\{ K^1, \dots, K^k \}}$ be a clique-cover of~${G[A \cap B]}$. 
    Then by \cref{lem:clique-quotient-graph}, ${\{ \widetilde{K}^1, \dots, \widetilde{K}^k \}}$ is a clique-cover of~${\widetilde{G}[\widetilde{A} \cap \widetilde{B}}]$. 
    With \cref{cor:Gtildeproperties}\ref{item:cor:Gtildeproperties1}, if follows that~${\abs{\widetilde{A} \cap \widetilde{B}} \leq k \cdot \reduced{\omega}{G}}$. 

    Finally, let~${(\widetilde{A},\widetilde{B})}$ be a separation of~$\widetilde{G}$ of order~$k$. 
    Since~$G[\widetilde{x}]$ is a clique for each~$\widetilde{x} \in \widetilde{A} \cap \widetilde{B}$, ${\{G[\widetilde{x}] \mid x \in \widetilde{A} \cap \widetilde{B} \}}$ is a clique-cover of~${A \cap B}$ of size~$k$, so~${\theta(G[A \cap B]) \leq k}$.
\end{proof}

\begin{lemma}
    \label{lemma:quotientpahts}
    Let~$\ell$ be a positive integer.
    For every graph~$G$, 
    \begin{enumerate}
        [label=(\arabic*)]
        \item \label{item:lemma:quotientpahts1} if ${x_1 x_2 \dots x_\ell}$ is an induced path in~$G$ and~${\ell \geq 3}$, then ${\widetilde{x_1} \widetilde{x_2} \dots \widetilde{x_\ell}}$ is an induced path in~$\widetilde{G}$, and
        \item \label{item:lemma:quotientpahts2} if $\widetilde{x_1} \widetilde{x_2} \dots \widetilde{x_\ell}$, then, for each set~${\{ r_1, r_2, \dots, r_\ell \}}$ with~${r_i \in \widetilde{x_i}}$ for all~${i \in [\ell]}$, we have that~${r_1 r_2 \dots r_\ell}$ is an induced path in~$G$.
    \end{enumerate}
\end{lemma}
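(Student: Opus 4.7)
The plan is to prove both parts by combining the edge-correspondence~(\cref{eq:ast}) with the module structure of the equivalence classes of $\sim_{\mathcal{K}(G)}$, and the key non-triviality lies in establishing that all the $\widetilde{x_i}$ are distinct in part~\ref{item:lemma:quotientpahts1}. Once distinctness is in hand, both adjacency and non-adjacency in $\widetilde{G}$ follow directly from~(\cref{eq:ast}) applied pairwise to the $x_i$'s.

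For part~\ref{item:lemma:quotientpahts1}, I would first observe that, for indices $i,j$ with $|i-j| \geq 2$, the vertices $x_i$ and $x_j$ are non-adjacent on the induced path and hence cannot be true twins (true twins are always adjacent), so $\widetilde{x_i} \neq \widetilde{x_j}$. The case $|i-j|=1$ is the one that requires $\ell \geq 3$: given an edge $x_i x_{i+1}$ on the path, I would exhibit a witness vertex distinguishing their closed neighbourhoods. If $i+2 \leq \ell$, then $x_{i+2}$ is adjacent to $x_{i+1}$ but not to $x_i$ (since the path is induced), so $N_G[x_i] \neq N_G[x_{i+1}]$; otherwise $i \geq 2$ and $x_{i-1}$ plays the symmetric role. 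In either case $x_i \not\sim_{\mathcal{K}(G)} x_{i+1}$, so $\widetilde{x_i}\neq\widetilde{x_{i+1}}$. With all classes distinct, \cref{eq:ast} yields $\widetilde{x_i}\widetilde{x_{i+1}}\in E(\widetilde{G})$ from $x_ix_{i+1}\in E(G)$, and $\widetilde{x_i}\widetilde{x_j}\notin E(\widetilde{G})$ from $x_ix_j\notin E(G)$ for $|i-j|\geq 2$, giving the induced path.

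For part~\ref{item:lemma:quotientpahts2}, the distinctness is automatic because the $\widetilde{x_i}$ are already distinct vertices of $\widetilde{G}$, so their equivalence classes are pairwise disjoint and hence $r_i \neq r_j$ for $i \neq j$. To control the edges, I would use that each equivalence class $\widetilde{x_i}$ is a module of $G$, so for any distinct $i,j$, either every vertex of $\widetilde{x_i}$ is adjacent to every vertex of $\widetilde{x_j}$, or no such edge exists; by \cref{obs:GtildeEdges}\ref{obs:GtildEedges:1} (applied to $X=\widetilde{x_i}$, $Y=\widetilde{x_j}$) the former happens precisely when $\widetilde{x_i}\widetilde{x_j}\in E(\widetilde{G})$. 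Consequently $r_ir_j\in E(G)$ iff $\widetilde{x_i}\widetilde{x_j}\in E(\widetilde{G})$, which is exactly the edge pattern of an induced path on $r_1,\dots,r_\ell$.

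The only genuine obstacle is the distinctness argument for consecutive vertices in part~\ref{item:lemma:quotientpahts1}, which is precisely where the hypothesis $\ell \geq 3$ must be used: for $\ell = 2$ a single edge $x_1 x_2$ between true twins would collapse to a single vertex of $\widetilde{G}$, so there is no path to speak of. Everything else is a routine application of~(\cref{eq:ast}) and the module property of clique-equivalence classes.
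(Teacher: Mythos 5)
Your proof is correct and follows essentially the same approach as the paper: part~\ref{item:lemma:quotientpahts2} by (\cref{eq:ast}), and part~\ref{item:lemma:quotientpahts1} by reducing to distinctness of the classes $\widetilde{x_i}$ and then applying (\cref{eq:ast}), with the $\ell\geq 3$ hypothesis used to produce a witness vertex separating consecutive $x_i,x_{i+1}$. You spell out the witness ($x_{i+2}$ or $x_{i-1}$) and the non-adjacent case slightly more explicitly than the paper does, but the argument is the same.
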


\begin{proof}
     \ref{item:lemma:quotientpahts2} immediately follows from (\cref{eq:ast}). 
     For \ref{item:lemma:quotientpahts1}, by (\cref{eq:ast}), it suffices to show is that no two vertices of the path are equivalent with respect to~$\sim_{\mathcal{K}(G)}$.
     This is clear for vertices that are not adjacent, so fix some arbitrary~${i,j \in [\ell]}$ with~${i < j}$ and $\abs{i-j} = 1$. 
     Since~${\ell \geq 3}$ and the path is induced, there is a~${k \in [\ell] \setminus \{i,j\}}$ such that~${x_k}$ is adjacent to exactly one of~${x_i, x_j}$.
     Hence, $x_i$ and~$x_j$ are not true twins, and therefore~${x_i \not \sim_{\mathcal{K}(G)} x_j}$. 
\end{proof}

Now, we can prove \cref{thm:hedgehogmenger}. 

\hedgehogmenger*

\begin{proof}
    Let~$G$ be a graph with ${\reduced{\omega}{G} \leq s}$ and~${\cideg{G} \leq t}$, and let~${A,B \subseteq V(G)}$. 
    By \cref{lem:cidcidegdelta}, ${\Delta(\widetilde{G}) \leq t(s-1)}$. 
    We apply \cref{thm:gartlandinducedmenger} to~$\widetilde{G}$ and~${\widetilde{A}, \widetilde{B}}$ to find, for each positive integer~$k$, either an induced $\widetilde{A}$-$\widetilde{B}$-linkage~${\widetilde{\mathcal{L}}}$ of order~$k$ or a set~$\widetilde{S}$ of size at most~${k \cdot (t(s-1) + 1)^{t^2(s-1)^2 + 1}}$ such that there does not exist an~$\widetilde{A}$-$\widetilde{B}$-path in~$\widetilde{G} - \widetilde{S}$. 

    Let~${\widetilde{\mathcal{L}}}$ be an $\widetilde{A}$-$\widetilde{B}$-linkage of order~$k$. 
    For each~$P \in \widetilde{\mathcal{L}}$, fix a set~$R_P$ of representatives of the equivalence classes in~$V(\widetilde{P})$ with the additional property that for the startvertex in~$\widetilde{A}$, we pick a representative in~$A$ and for the endvertex in~$\widetilde{B}$, we pick a representative in~$B$. 
    By \cref{lemma:quotientpahts}, each~$G[R_P]$ is an induced $A$-$B$-path. 
    Note that since~$\widetilde{\mathcal{L}}$ is an induced $\widetilde{A}$-$\widetilde{B}$-linkage, for distinct~$P,Q \in \widetilde{\mathcal{L}}$, we observe that no vertex in~$R_P$ is equivalent to a vertex in~$R_Q$. 
    So in particular, $R_P$ and~$R_Q$ are disjoint, and by \cref{obs:GtildeEdges}\ref{obs:GtildEedges:3}, there is no edge between~$R_P$ and~$R_Q$ in~$E(G)$. 
    Hence, ${\mathcal{L} \coloneqq \{ G[R_P] \mid P \in \widetilde{\mathcal{L}}\}}$ is an $A$-$B$-linkage in~$G$ of order~$k$. 

    Lastly, let~$\widetilde{S}$ be a set of size at most~${k \cdot (t(s-1) + 1)^{t^2(s-1)^2 + 1}}$ such that there does not exist an~$\widetilde{A}$-$\widetilde{B}$-path in~$\widetilde{G} - \widetilde{S}$. 
    Let~$(\widetilde{X},\widetilde{Y})$ be a separation of~$\widetilde{G}$ with~${\widetilde{A} \subseteq \widetilde{X}}$, ${\widetilde{B} \subseteq \widetilde{Y}}$, and~$\widetilde{S} = \widetilde{X} \cap \widetilde{Y}$. 
    By \cref{lemma:quotientseparations}, the separation $(\bigcup \widetilde{X}, \bigcup \widetilde{Y})$ is a separation with~${A \subseteq \bigcup \widetilde{X}}$, ${B \subseteq \widetilde{Y}}$, and for~${S \coloneqq \widetilde{X} \cap \widetilde{Y}}$, we have~$\theta(G[S]) \leq (t(s-1) + 1)^{t^2(s-1)^2 + 1}$, as desired. 
\end{proof}

\section{Comparison with other graph parameters}
\label{sec:parametercomp}

In this section, we further analyse how the graph parameters of clique-incidence degree and clique-incidence diversity relate to other parameters to obtain further characterisations for clique-sparse graph classes. 

\begin{lemma}
    \label{lem:parameterinequalities-1}
    For every graph~$G$ holds 
    \[
        \local{\alpha}{G} 
        \leq \local{\theta}{G}
        \leq \cideg{G}.
    \]
\end{lemma}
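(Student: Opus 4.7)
The plan is to verify the two inequalities separately, both via direct arguments on the closed neighbourhood of an arbitrary vertex.

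For the first inequality, $\local{\alpha}{G} \leq \local{\theta}{G}$, the key observation is the standard fact that $\alpha(H) \leq \theta(H)$ for any graph $H$: any independent set must be covered by pairwise distinct cliques of any clique-cover, since no two independent vertices can share a clique. Applying this to $H = G[N[v]]$ for every vertex $v$ and taking the maximum yields the inequality.

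For the second inequality, $\local{\theta}{G} \leq \cideg{G}$, I will fix an arbitrary vertex $v \in V(G)$ and exhibit a clique-cover of $G[N[v]]$ of size at most $\cideg{G}$. Let $K_1, \dots, K_d$ be the maximal cliques of $G$ containing $v$, where $d \leq \cideg{G}$ by definition. The main step is to show that $N[v] \subseteq K_1 \cup \dots \cup K_d$: for any $u \in N[v]$, either $u = v$ or $uv \in E(G)$, in which case $\{u,v\}$ is a clique and can be extended to some maximal clique, which must be one of the $K_i$ since it contains $v$. Restricting each $K_i$ to $N[v]$ then produces a clique-cover of $G[N[v]]$ of size at most $d$, so $\theta(G[N[v]]) \leq \cideg{G}$.

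Taking the maximum over $v$ gives $\local{\theta}{G} \leq \cideg{G}$. The proof is entirely elementary; the only subtlety is recognising that every neighbour of $v$ lies in some maximal clique through $v$, which is immediate from the maximal-clique-extension argument. There is no serious obstacle.
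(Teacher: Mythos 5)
Your proof is correct and takes essentially the same approach as the paper: for the first inequality both apply the standard fact $\alpha \leq \theta$ to the closed neighbourhood, and for the second both exhibit the maximal cliques through $v$ as a clique-cover of $G[N[v]]$ of size at most $\cideg{G}$.
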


\begin{proof}
    To show ${\local{\alpha}{G} \leq \local{\theta}{G}}$, notice that each leaf of a maximal induced star needs to be covered by a distinct clique in any clique-cover. 
    
    For the inequality ${\local{\theta}{G} \leq \cideg{G}}$, 
    notice that every vertex~$v$ is a dominating vertex of the graph induced by the closed neighbourhood of~$v$.
    Thus, for every clique~$K$ of~$G[N_G[v]]$, we observe that~${G[V(K) \cup \{v\}]}$ is a clique as well. 
    In particular, the set~${\{ G[K \cap N_G[v]] \colon K \in \mathcal{K}(G) \textnormal{ with } v \in K \}}$ is a
    clique-cover of~${G[N_G[v]]}$. 
\end{proof}

Note that all these inequalities are tight, as witnessed by trees.
On the other hand, for each of the inequalities, there exist graphs where the difference is arbitrarily large. 
For the first inequality, consider a graph obtained from the complement of a graph, which is both triangle-free and has arbitrarily large chromatic number (cf.~Erd\H{o}s~\cite{Erdos1959}), by adding a universal vertex.
For the difference between~$\local{\theta}{}$ and~$\cideg{}$, note that~$\antimatchingKK_n$ is a graph with~${\local{\theta}{\antimatchingKK_n} = 2}$ and~${\cideg{\antimatchingKK_n} = 2^{n-1}}$.

\begin{lemma}
    \label{lem:parameterinequalities-2}
    For every graph~$G$ holds
    \[
        \reduced{\Delta}{G} \leq \local{\alpha}{G}^{\reduced{\omega}{G}}.
    \]
\end{lemma}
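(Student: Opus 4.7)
By \cref{cor:Gtildeproperties} together with the definition of $\reduced{\Delta}{}$, we have $\local{\alpha}{G} = \local{\alpha}{\widetilde{G}}$, $\reduced{\omega}{G} = \omega(\widetilde{G})$, and $\reduced{\Delta}{G} = \Delta(\widetilde{G})$. Writing $H \coloneqq \widetilde{G}$, $d \coloneqq \local{\alpha}{H}$, and $r \coloneqq \omega(H)$, the claim is equivalent to $\Delta(H) \leq d^r$. My plan is to split on whether $d \leq 1$ (where one exploits that $H$ has no true twins by construction) or $d \geq 2$ (which will follow from Ramsey's theorem plus an elementary numerical inequality).

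For $d \leq 1$, one may assume $d = 1$ since $d = 0$ makes $H$ empty. Then every closed neighbourhood of $H$ induces a clique, which forces every connected component of $H$ to be a clique: if $u_0 u_1 u_2$ is a path then $u_0, u_2 \in N[u_1]$, so $u_0 u_2$ is an edge, and iterating this observation along paths shows that any two vertices in a common component are adjacent. But any two vertices of a clique share the same closed neighbourhood, hence are true twins; as $H$ has none by construction, every such component is a single vertex, so $H$ is edgeless and $\Delta(H) = 0 \leq 1 = d^r$.

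For $d \geq 2$, I would pick $v \in V(H)$ with $\deg(v) = \Delta(H)$ and set $F \coloneqq H[N(v)]$. Any clique of $F$ extends by $v$ to a larger clique of $H$, so $\omega(F) \leq r - 1$; any independent set of $F$ is also an independent set in $H[N[v]]$, so $\alpha(F) \leq d$. The classical Erdős--Szekeres bound on Ramsey numbers then yields
\[
    \Delta(H) \;=\; |V(F)| \;<\; R(r, d + 1) \;\leq\; \binom{r + d - 1}{r - 1}.
\]
It then suffices to verify the elementary inequality $\binom{r + d - 1}{r - 1} \leq d^r$ for all $d \geq 2$ and $r \geq 1$, which follows by a short induction on $r$: the base cases $r = 1$ (giving $1 \leq d$) and $r = 2$ (giving $d + 1 \leq d^2$ for $d \geq 2$) are immediate, and the step from $r$ to $r + 1$ uses $\binom{r + d}{r} = \binom{r + d - 1}{r - 1} \cdot (r + d)/r$ together with $(r + d)/r \leq d \iff (r - 1)(d - 1) \geq 1$, which holds for all $r, d \geq 2$. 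The only subtle point in the whole proof is noticing that the Ramsey bound breaks down at $d = 1$, since $\binom{r}{1} = r$ strictly exceeds $1^r$ for $r \geq 2$; this is precisely where the no-true-twins property of $\widetilde{G}$ is genuinely used, as the example $H = K_n$ (with $\local{\alpha}{} = 1$ and $\Delta = n - 1$) shows that the bound fails without it.
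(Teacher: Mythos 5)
Your proof is correct, and it takes a genuinely different route from the paper. The paper's proof is a direct, hands-on combinatorial construction: fixing $\widetilde{v} \in V(\widetilde{G})$, it builds a labelled $s$-ary tree of height $d$ (where $s = \local{\alpha}{G}$ and $d = \reduced{\omega}{G}$) whose labels recursively record maximum independent sets inside iterated common neighbourhoods of the labels along root-to-node paths, and it shows by a maximality argument that every neighbour of $\widetilde{v}$ must eventually appear as a label, so $\deg(\widetilde{v})$ is at most the number of non-root labelled nodes. You instead observe that $F \coloneqq \widetilde{G}[N(\widetilde{v})]$ satisfies $\omega(F) \leq r - 1$ and $\alpha(F) \leq d$, invoke the Erd\H{o}s--Szekeres upper bound on Ramsey numbers to get $\abs{V(F)} < \binom{r + d - 1}{r - 1}$, and finish with a short inductive verification of the binomial inequality $\binom{r+d-1}{r-1} \leq d^r$ for $d \geq 2$. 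This is a black-box derivation versus the paper's explicit construction; it is shorter and cleaner at the cost of routing through Ramsey's theorem plus an auxiliary numerical estimate. Notably, you correctly isolate the degenerate case $d = \local{\alpha}{G} = 1$, where the binomial inequality genuinely fails, and resolve it by the structural observation that the clique-quotient graph~$\widetilde{G}$ has no true twins; the paper's proof also implicitly requires $s \geq 2$ for the final count of labelled tree nodes to land below $s^d$, so your explicit treatment of this edge case is a small improvement in rigour.
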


\begin{proof}
    Let~$G$ be a graph,
    let~${s \coloneqq \local{\alpha}{G}}$, and let~${d \coloneqq \reduced{\omega}{G}}$. 
    Consider some vertex~${\widetilde{v} \in V(\widetilde{G})}$. 
    Let $T$ be the $s$-ary tree of height~$d$ and for each node~${t \in V(T)}$, let~$P_t$ denote the unique path in~$T$ from the root~$r$ of~$T$ to~$t$.
    
    We construct a labelling~$\lambda$ of some nodes of $T$ with vertices of~$\widetilde{G}$ such that every vertex in~$N_{\widetilde{G}}(v)$ appears as a label, proving that~${\deg{\widetilde{v}} \leq s^d}$.  
    We start by defining~$\lambda(r) \coloneqq \widetilde{v}$. 
    Assume for a node~$t$ that~$\lambda(t')$ is already defined for each~${t' \in V(P_t)}$. 
    We then label a set of children of~$t$ with a maximum independent set in~${\bigcap \{N_{\widetilde{G}}(\lambda(t')) \mid t' \in V(P_t) \}}$, and leave all other children unlabelled. 
    Continuing this process, we obtain a labelled subtree~$T'$ of~$T$ with the property that for each node~${t \in V(T')}$, each~$\lambda(V(P_t))$ is a clique of~$\widetilde{G}$ and for the set~$C_t$ of children of~$t$ in~$T'$, we have that~$\lambda(C_t)$ is a maximum independent set in the common neighbourhood of~$V(P_t)$.
    
    Assume for a contradiction that some~${\widetilde{w} \in N_{\widetilde{G}}(\widetilde{v})}$ is not used as a label in $T'$. 
    Let~$t$ be maximal in the tree order such that~$\widetilde{w}$ is adjacent to~$\lambda(t')$ for every node~${t' \in V(P_t)}$. 
    If~$t$ is not a leaf of~$T'$, then, since the set~$\lambda(C_t)$ is a maximum independent set in the common neighbourhood of~$V(P_t)$, there is a child~$t'$ of~$t$ in~$T'$ such that~$\widetilde{w}$ is adjacent to~$\lambda(t')$, contradicting the maximal choice of~$t$. 
    If~$t$ is a leaf of~$T'$ but not a leaf of~$T$, then, by construction, the common neighbourhood of~$V(P_t)$ is empty, a contradiction. 
    But then, $t$ is a leaf of~$T$ and ${\lambda(V(P_t)) \cup \{\widetilde{w}\}}$ is a clique in~$\widetilde{G}$ of size~${d+1}$, a contradiction. 
    Hence, $\reduced{\Delta}{G} \leq s^{d}$, as desired. 
\end{proof}

Another auxiliary graph and parameter that may naturally come to mind in connection with these graphs is the maximum degree of the linegraph~$\linegraph{\mathcal{K}(G)}$ of the clique-hypergraph of~$G$, which we call the \emph{clique-degree}, denoted by~$\cdeg{G}$. 
We observe the following inequalities. 

\begin{lemma}
    \label{obs:CID+CIDEGvsCDEG}
    For every graph~$G$, 
    \begin{enumerate}
        [label=(\arabic*)]
        \item \label{item:obs:CID+CIDEGvsCDEG-1} ${\cdeg{G} \leq \reduced{\omega}{G} \cdot  (\cideg{G}-1)}$, 
        \item \label{item:obs:CID+CIDEGvsCDEG-2} ${\reduced{\omega}{G} \leq 2^{\cdeg{G}}}$, and 
        \item \label{item:obs:CID+CIDEGvsCDEG-3} ${\cideg{G} \leq \cdeg{G}+1}$. 
    \end{enumerate}  
\end{lemma}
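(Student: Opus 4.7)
The plan is to prove all three inequalities by short counting arguments built around a single observation: two vertices that are equivalent under $\sim_{\mathcal{K}(G)}$ lie in exactly the same maximal cliques, so the list of maximal cliques through a vertex is an invariant of its equivalence class, and any maximal clique meeting a class meets it in all its members.

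First I would dispatch (iii), which is the simplest. I would fix a vertex $v$ and any maximal clique $K$ containing $v$ (one exists since $\{v\}$ is a clique in $G$). Every other maximal clique containing $v$ also meets $K$ (at $v$), so it corresponds to a neighbour of $K$ in $\linegraph{\mathcal{K}(G)}$. There are at most $\cdeg{G}$ such neighbours, and adding $K$ itself gives $\cideg{G}\leq \cdeg{G}+1$.

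For (ii), I would fix a maximal clique $K$ and assign to each vertex $v\in K$ the \emph{signature} $\sigma(v):=\{K'\in\mathcal{K}(G)\setminus\{K\}\mid v\in K'\}$. This signature is a subset of the set of neighbours of $K$ in $\linegraph{\mathcal{K}(G)}$, which has size at most $\cdeg{G}$, so there are at most $2^{\cdeg{G}}$ possible signatures. If $v,w\in K$ have the same signature, then they belong to exactly the same maximal cliques (namely $K$ together with the cliques in the signature), so $v\sim_{\mathcal{K}(G)}w$. The number of equivalence classes meeting $K$ is therefore at most $2^{\cdeg{G}}$, and taking the maximum over all $K\in\mathcal{K}(G)$ yields $\reduced{\omega}{G}\leq 2^{\cdeg{G}}$.

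For (i), I would again fix a maximal clique $K$ and pick a set $R\subseteq K$ containing exactly one representative from each $\sim_{\mathcal{K}(G)}$-class that meets $K$, so that $|R|\leq \reduced{\omega}{G}$. Any maximal clique $K'\neq K$ with $K\cap K'\neq\emptyset$ contains some $v\in K\cap K'$; since the whole class of $v$ lies in $K'$, the representative in $R$ of that class also lies in $K'$. Thus each neighbour of $K$ in $\linegraph{\mathcal{K}(G)}$ is ``witnessed'' by some $r\in R$, and each $r\in R$ lies in at most $\cideg{G}-1$ maximal cliques distinct from $K$. A union bound gives $\cdeg{G}\leq \reduced{\omega}{G}\cdot(\cideg{G}-1)$. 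I do not expect a real obstacle; the only nuance is the observation in (i) that it suffices to list one representative per class, which is exactly the mechanism making $\reduced{\omega}{G}$ and $\cideg{G}-1$ combine multiplicatively.
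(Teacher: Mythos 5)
Your proposal is correct and follows essentially the same route as the paper: all three parts are proved by counting inside a fixed maximal clique $K$, using that each neighbour of $K$ in $\linegraph{\mathcal{K}(G)}$ contains an entire $\sim_{\mathcal{K}(G)}$-class of $K$ (your ``signature'' in~(ii) and representative set $R$ in~(i) are just different phrasings of the paper's observation that the trace of a class on $K$ is determined by the subset of $\mathcal{N}$ containing it). No meaningful difference.
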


\begin{proof}
    Consider a maximal clique~${K \in \mathcal{K}(G)}$, and let~$\mathcal{N}$ denote the set of maximal cliques that intersect~$K$ but are not equal to~$K$. 
    
    For \ref{item:obs:CID+CIDEGvsCDEG-1}, note that the number of maximal cliques in~$\mathcal{N}$ that intersect a fixed equivalence class is at most~$\cideg{G}-1$. 
    Hence, ${\abs{\mathcal{N}} \leq \reduced{\omega}{G} \cdot (\cideg{G}-1)}$. 
    
    For \ref{item:obs:CID+CIDEGvsCDEG-2}, note that the intersection of every equivalence class of~$\sim_{\mathcal{K}(G)}$ with~$K$ is described precisely by a subset of~$\mathcal{N}$ that contains the class. 
    Hence, the clique-incidence-diversity of~$K$ is at most~$2^{\cdeg{G}}$. 
    
    Moreover, for \ref{item:obs:CID+CIDEGvsCDEG-3}, each vertex of~$K$ is contained in~$K$ and at most~$\cdeg{G}$ many other maximal cliques, hence~${\cideg{G} \leq \cdeg{G}+1}$.
\end{proof}

As we observed above,~$\local{\alpha}{}$, $\local{\theta}{}$, and~$\cideg{}$ are not equivalent. 
But with \cref{lem:parameterinequalities-1,lem:parameterinequalities-2}, as well as \cref{cor:cid-cideg-Deltatilde}, we conclude the following equivalences of parameters.

\begin{corollary}
    \label{cor:parameterequivalence1}
    The parameters~$\reduced{\Delta}{}$, 
    $\cdeg{}$, 
    $\maxP{\cideg{}, \reduced{\omega}{}}{}$,
    $\maxP{\local{\alpha}{}, \reduced{\omega}{}}{}$, and
    $\maxP{\local{\theta}{}, \reduced{\omega}{}}{}$
    are equivalent. 
    \qed
\end{corollary}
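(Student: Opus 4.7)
The plan is to establish a cycle of asymptotic bounds among the five parameters, most of which is already available from the preceding results. By \cref{cor:cid-cideg-Deltatilde}, the equivalence of $\reduced{\Delta}{}$ and $\maxP{\cideg{},\reduced{\omega}{}}{}$ is already in place, so I take $\maxP{\cideg{},\reduced{\omega}{}}{}$ as the pivot and compare the remaining three parameters against it.

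For $\maxP{\local{\alpha}{},\reduced{\omega}{}}{}$ and $\maxP{\local{\theta}{},\reduced{\omega}{}}{}$, the chain $\local{\alpha}{G} \leq \local{\theta}{G} \leq \cideg{G}$ from \cref{lem:parameterinequalities-1} immediately yields
\[
    \maxP{\local{\alpha}{},\reduced{\omega}{}}{G} \leq \maxP{\local{\theta}{},\reduced{\omega}{}}{G} \leq \maxP{\cideg{},\reduced{\omega}{}}{G}.
\]
For the reverse direction, \cref{lem:parameterinequalities-2} gives $\reduced{\Delta}{G} \leq \local{\alpha}{G}^{\reduced{\omega}{G}}$, so $\reduced{\Delta}{G}$, and hence by \cref{cor:cid-cideg-Deltatilde} also $\maxP{\cideg{},\reduced{\omega}{}}{G}$, is bounded by a function of $\maxP{\local{\alpha}{},\reduced{\omega}{}}{G}$. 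This closes the equivalence among these three parametric pairs and $\reduced{\Delta}{}$.

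To incorporate $\cdeg{}$, I invoke \cref{obs:CID+CIDEGvsCDEG}. Part~(1), $\cdeg{G} \leq \reduced{\omega}{G} \cdot (\cideg{G} - 1)$, bounds $\cdeg{G}$ by a function of $\maxP{\cideg{},\reduced{\omega}{}}{G}$. Conversely, parts~(2) and~(3) give $\reduced{\omega}{G} \leq 2^{\cdeg{G}}$ and $\cideg{G} \leq \cdeg{G} + 1$, which together bound $\maxP{\cideg{},\reduced{\omega}{}}{G}$ by a function of $\cdeg{G}$.

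There is no real obstacle here: the whole argument reduces to threading together the bounds already established in \cref{cor:cid-cideg-Deltatilde,lem:parameterinequalities-1,lem:parameterinequalities-2,obs:CID+CIDEGvsCDEG}. The only subtlety worth double-checking is the degenerate edgeless case flagged in \cref{lem:cidcidegdelta}, but this has already been handled inside \cref{cor:cid-cideg-Deltatilde}, so the present corollary follows by a straightforward composition of inequalities.
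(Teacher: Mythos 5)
Your proposal is correct and follows exactly the route the paper intends: the corollary is a direct composition of \cref{cor:cid-cideg-Deltatilde}, \cref{lem:parameterinequalities-1}, \cref{lem:parameterinequalities-2}, and \cref{obs:CID+CIDEGvsCDEG}, which is precisely the chain of inequalities you spell out. In fact you are slightly more careful than the paper's preamble, which cites only the first three of those results even though \cref{obs:CID+CIDEGvsCDEG} is also needed to fold $\cdeg{}$ into the equivalence class.
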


\section{An induced grid theorem on graphs with bounded clique-incidence-diversity and bounded local independence number}

Next, we prove the generalisation of \cref{thm:maxdegreegridtheorem} to graphs of bounded $\maxP{\reduced{\omega}{},\local{\alpha}{}}{}$. 
To that end, we first investigate how certain width parameters relate between~$G$ and~$\widetilde{G}$. 

\begin{lemma}
    \label{lemma:width-comparison}
    For every graph~$G$,
    \begin{enumerate}
        [label=(\arabic*)]
        \item \label{item:lemma:width-comparison0} ${\ptw{\alpha}{G} = \ptw{\alpha}{
        \widetilde{G}}}$ and~${\ptw{\theta}{G} = \ptw{\theta}{\widetilde{G}}}$, 
        \item \label{item:lemma:width-comparison1} $\ptw{\alpha}{G} \leq \ptw{\theta}{G} \leq \reduced{\tw{}}{G}$, 
        \item \label{item:lemma:width-comparison2} $\reduced{\tw{}}{G} \leq \ptw{\theta}{G} \cdot \reduced{\omega}{G}$, and 
        \item \label{item:lemma:width-comparison3} $\reduced{\tw{}}{G} \leq \ptw{\alpha}{G} \cdot \reduced{\Delta}{G}$. 
    \end{enumerate}
\end{lemma}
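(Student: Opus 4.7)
The proof handles each item separately but uses one common strategy: transfer tree-decompositions between $G$ and its clique-quotient $\widetilde{G}$, and then estimate the relevant bag parameters pointwise. The only non-routine step is the Helly-style argument needed for (1); the rest is careful accounting through the correspondences established in \cref{sec:cid}.

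For (1), the plan is to set up explicit correspondences in both directions. Given a tree-decomposition $(T, \beta)$ of $G$, I would define $\widetilde{\beta}(t) \coloneqq \widetilde{\beta(t)}$. The vertex- and edge-cover axioms follow immediately from \cref{obs:GtildeEdges}. The subtree axiom is the delicate point: for each equivalence class $\widetilde{v}$, the subtrees $T_w \coloneqq \{t \in V(T) : w \in \beta(t)\}$ for $w \in \widetilde{v}$ pairwise intersect (any two true twins are adjacent, hence share a bag), so by the Helly property for subtrees of a tree they admit a common node and their union is itself a subtree. Conversely, a tree-decomposition $(T,\widetilde{\beta})$ of $\widetilde{G}$ lifts back to $G$ via $\beta(t) \coloneqq \bigcup \widetilde{\beta}(t)$, with the three axioms straightforward to verify. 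The $\alpha$- and $\theta$-widths of corresponding bags then agree by essentially the argument of \cref{cor:Gtildeproperties}: independent sets project and lift through representatives, while clique-covers project and lift while preserving size.

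For (2), each inequality is obtained bag-wise: the bound $\alpha(H) \leq \theta(H)$ (any independent set meets each clique of a cover at most once) yields $\ptw{\alpha}{G} \leq \ptw{\theta}{G}$, while the trivial $\theta(H) \leq |V(H)|$ combined with (1) yields $\ptw{\theta}{G} = \ptw{\theta}{\widetilde{G}} \leq \tw{\widetilde{G}} = \reduced{\tw{}}{G}$. For (3), applying (1) I would take an optimal $\theta$-tree-decomposition of $\widetilde{G}$; each bag $\widetilde{\beta}(t)$ admits a clique-cover of size at most $\ptw{\theta}{G}$, each clique of size at most $\omega(\widetilde{G}) = \reduced{\omega}{G}$ by \cref{cor:Gtildeproperties}, so $|\widetilde{\beta}(t)| \leq \ptw{\theta}{G} \cdot \reduced{\omega}{G}$. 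Part (4) is analogous: taking an optimal $\alpha$-tree-decomposition of $\widetilde{G}$, the greedy-colouring bound $|V(H)| \leq \alpha(H) \cdot (\Delta(H)+1)$ applied in each bag, together with $\Delta(\widetilde{G}[\widetilde{\beta}(t)]) \leq \reduced{\Delta}{G}$, bounds $|\widetilde{\beta}(t)|$ and hence $\tw{\widetilde{G}} = \reduced{\tw{}}{G}$ in terms of $\ptw{\alpha}{G}$ and $\reduced{\Delta}{G}$ as required.
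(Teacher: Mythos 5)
Your proof is correct and follows the same overall strategy as the paper: construct a tree-decomposition of $\widetilde{G}$ from one of $G$ by pushing bags forward to equivalence classes (and conversely pull back), then verify widths bag by bag. The one place you deviate in a genuine way is the subtree axiom in item~(1). The paper argues by contradiction: if $\{t : \widetilde{v}\in\widetilde{\beta}(t)\}$ were disconnected, pick a cut node $t'$; then two representatives $v_1,v_2$ of $\widetilde{v}$ live in subtrees on opposite sides of $t'$, and since true twins are adjacent, the edge $v_1v_2$ would be covered by no bag, a contradiction. You instead observe that the subtrees $T_w$ for $w\in\widetilde{v}$ pairwise intersect (again since true twins are adjacent and so share a bag) and invoke Helly to get a common node, after which the union is connected. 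Both arguments work; Helly is a slightly heavier hammer than you need here (pairwise intersection of connected subsets of a tree already forces a connected union by a straightforward induction), but it is a clean and valid way to present it. A small bonus of your phrasing in~(1) is that by arguing directly that independent sets and clique-covers transfer between $G[\beta(t)]$ and $\widetilde{G}[\widetilde{\beta}(t)]$ you avoid a subtle inaccuracy in the paper, which claims these graphs are isomorphic; that is not quite true (the twin relation inside $G[\beta(t)]$ can be coarser than the restriction of $\sim_{\mathcal{K}(G)}$), but the parameter equalities you need still hold.

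One discrepancy worth flagging: for item~(4) you use the correct bound $\abs{V(H)}\leq\alpha(H)\cdot(\Delta(H)+1)$, which gives $\reduced{\tw{}}{G}\leq\ptw{\alpha}{G}\cdot(\reduced{\Delta}{G}+1)$, not $\ptw{\alpha}{G}\cdot\reduced{\Delta}{G}$ as the lemma states. The paper's proof sketch cites the inequality $\abs{V(G)}\leq\alpha(G)\cdot\Delta(G)$, which is false in general (take $G=K_2$: then $\widetilde{G}=K_1$, so $\reduced{\tw{}}{G}=1$ while $\ptw{\alpha}{G}\cdot\reduced{\Delta}{G}=1\cdot 0=0$). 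So your argument, which yields the weaker but correct estimate, is actually the right one; the $+1$ is unavoidable. Since this lemma is only used downstream for asymptotic equivalences (and, as far as I can see, item~(4) is never invoked), the slip is harmless, but you should not claim to have obtained the stated constant.
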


\begin{proof}
    For \ref{item:lemma:width-comparison0} we first look at a way to turn a tree-decomposition of~$G$ into one of~$\widetilde{G}$ and vice versa. 
    Moreover, both of these constructions preserve both the $\alpha$-width and the $\theta$-width of the original tree-decomposition. 
    
    Let~$(T,\beta)$ be a tree-decomposition of~$G$. 
    Then we define~$\widetilde{\beta}$ by setting~${\widetilde{\beta}(t) \coloneqq \{ \widetilde{v} \mid v \in \beta(t) \}}$. 
    We claim that~${(T,\widetilde{\beta})}$ is a tree-decomposition of~$\widetilde{G}$. 
    Clearly, the first two properties of a tree-decomposition follow directly from the fact that~$(T,\beta)$ is a tree-decomposition. 
    So consider a vertex~${\widetilde{v} \in V(\widetilde{G})}$ and assume for a contradiction that~${\{ t \in V(T) \mid v \in \widetilde{\beta}(t) \}}$ induces a disconnected subforest of~$T$. 
    Then there is a node~$t'$ of~$T$ such that~$\widetilde{v}$ is not in~$\widetilde{\beta}(t)$, but two distinct components of~${T - t'}$ contain nodes~$t_1$ and~$t_2$ with~${\widetilde{v} \in \widetilde{\beta}(t_1) \cap \widetilde{\beta}(t_2)}$. 
    So $\beta(t_1)$ contains a vertex~$v_1$ and~$\beta(t_2)$ contains a vertex~$v_2$, both equivalent to~$v$, while~$\beta(t')$ does not contain any node equivalent to~$v$. 
    In particular, the subtree induced by~${\{ t \in V(T) \mid v_1 \in \beta(t) \}}$ and the subtree induced by~${\{ t \in V(T) \mid v_2 \in \beta(t) \}}$ are contained in distinct components of~${T - t'}$. 
    But then, no bag of~$(T,\beta)$ contains the edge~$v_1v_2$, contradicting that~$(T,\beta)$ is a tree-decomposition. 
    It is easy to see that~$\widetilde{G}[\widetilde{\beta}(t)]$ is isomorphic to~$\widetilde{G[\beta(t)]}$. 
    So the fact that the $\alpha$- and $\theta$-width of~$(T,\beta)$ and~$(T,\widetilde{\beta})$ are the same follows from \cref{cor:Gtildeproperties}. 

    On the other hand, let~$(T,\beta)$ be a tree-decomposition of~$\widetilde{G}$. 
    We define~$\overline{\beta}$ by setting~${\overline{\beta}(t) \coloneqq \{ v \mid \widetilde{v} \in \beta(t) \}}$. 
    Now~${(T,\overline{\beta})}$ is a tree-decomposition of~$G$ as all the properties follow directly from the fact that~$(T,\beta)$ is a tree-decomposition.
    Similar to before, $\widetilde{G}[\beta(t)]$ is isomorphic to~$\widetilde{G[\overline{\beta}(t)]}$.
    So the fact that the $\alpha$- and $\theta$-width of~$(T,\beta)$ and~$(T,\overline{\beta})$ are the same follows again from \cref{cor:Gtildeproperties}. 

    Both constructions together show \ref{item:lemma:width-comparison0}. 

    Towards \ref{item:lemma:width-comparison1}, note that since the size of a maximum independent set is at most as large as a smallest clique-cover, we observe that~$\ptw{\alpha}{G} \leq \ptw{\theta}{G}$. 
    And since clearly~$\ptw{\theta}{\widetilde{G}} \leq \tw{\widetilde{G}}$, \ref{item:lemma:width-comparison1} follows from~\ref{item:lemma:width-comparison0}. 

    Now \ref{item:lemma:width-comparison2} and \ref{item:lemma:width-comparison3} follow with~\ref{item:lemma:width-comparison0} directly from the facts that~${\abs{V(G)} \leq \alpha(G) \Delta(G)}$ and~${\abs{V(G)} \leq \theta(G) \omega(G)}$ for every graph~$G$. 
\end{proof}

Using the established relations, we finally prove \cref{thm:hedgehoggrid}. 

\hedgehoggrid*

\begin{proof}
    Let $f'$ be the function from \cref{thm:maxdegreegridtheorem}. 
    We define~${f(k,s,d) \coloneqq f'(k,(s-1) \cdot d)}$. 
    Let~$G$ be a graph with~${\ptw{\theta}{G} \geq f(k,s,d)}$.
    By \cref{lemma:width-comparison}, ${\ptw{\theta}{G} = \ptw{\theta}{\widetilde{G}} \leq \reduced{\tw{}}{G} = \tw{\widetilde{G}}}$.
    By \cref{lem:parameterinequalities-2}, ${\reduced{\Delta}{G} = \Delta(\widetilde{G}) \leq \local{\alpha}{G}^{\reduced{\omega}{G}}}$. 
    So by applying \cref{thm:maxdegreegridtheorem} to~$\widetilde{G}$, we find a $(k \times k)$-grid as an induced minor of~$\widetilde{G}$. 
    And since, by \cref{lem:clique-quotient-graph}, $\widetilde{G}$ is isomorphic to an induced subgraph of~$G$, we find a $(k \times k)$-grid as an induced minor of~$G$ as well. 
\end{proof}

\section{Forbidden induced subgraphs}
\label{sec:forbiddeninducedsubgraphs}

We move on to proving the arguably most powerful theorem of this paper:
An asymptotic classification of the hereditary graph classes with bounded $\maxP{\cideg{},\reduced{\omega}{}}{}$ in terms of a finite set of parametric graphs that have to be excluded as induced subgraphs.
For this purpose, we first provide an asymptotic description of all hereditary graph classes where $\reduced{\omega}{}$ is bounded before obtaining the full asymptotic classification theorem as a corollary. 

First, we make some basic observations on clique-incidence-diversity. 

Let~$G$ be a graph and~${X \subseteq V(G)}$.
A set~${Y \subseteq N_G(X)}$ of neighbours of~$X$ is said to be \emph{$X$-diverse} if no two distinct vertices in~$Y$ have the same set of neighbours in~$X$, that is~${N_G(y_1) \cap X \neq N_G(y_2) \cap X}$ for all distinct choices of~${y_1,y_2 \in Y}$.
The \emph{diversity number} of a set~$X$ of vertices from a graph~$G$, denoted by~$\dn{G}{X}$, is the maximum size of an $X$-diverse set in~${N_G(X)}$.
In case the graph~$G$ is understood from the context, we drop the subscript. 

\begin{lemma}\label{lemma:diverseneighbours}
    Let~$G$ be a graph and~${K \in \mathcal{K}(G)}$, then~${\log(\dn{}{K}+2) \leq \abs{\widetilde{K}} \leq 2^{\dn{}{K}}}$.
\end{lemma}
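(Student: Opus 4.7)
The plan is to prove the two inequalities separately by analysing, for each vertex $y \in N_G(K)$, the set $N(y) \cap K$ and relating it to the equivalence classes in $\widetilde{K}$. Throughout, I will use that two vertices $u,v$ are $\sim_{\mathcal{K}(G)}$-equivalent if and only if they are true twins, i.e.\ $N[u]=N[v]$, and that for $u,v \in K$ this is equivalent to $u,v$ having the same neighbours in $V(G)\setminus K$ (since $K$ is a clique, $u,v$ already agree on all of $K$).

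For the upper bound $|\widetilde{K}| \leq 2^{\dn{}{K}}$, I will fix a maximal $K$-diverse set $Y \subseteq N_G(K)$, which exists and has size $\dn{}{K}$. The key observation is that if $u,v \in K$ are \emph{not} true twins, then there exists $w \in V(G)\setminus\{u,v\}$ adjacent to exactly one of them; such a $w$ cannot lie in $K$ (clique), so $w \in N_G(K)$, and by maximality of $Y$ there is some $y \in Y$ with $N(y)\cap K = N(w)\cap K$, hence $y$ also distinguishes $u$ from $v$. This shows that the map $K \to 2^Y$, $v \mapsto N(v)\cap Y$, factors through $\sim_{\mathcal{K}(G)}$ and is injective on equivalence classes. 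Therefore $|\widetilde{K}| \leq 2^{|Y|} = 2^{\dn{}{K}}$.

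For the lower bound $\log(\dn{}{K}+2)\leq|\widetilde{K}|$, I will reformulate it as $\dn{}{K}\leq 2^{|\widetilde{K}|}-2$. Here the key observation runs in the reverse direction: if $y \in N_G(K)$ and $u \sim_{\mathcal{K}(G)} v$ with $u,v \in K$, then $u,v$ are true twins, so $yu \in E(G) \Leftrightarrow yv \in E(G)$; consequently $N(y)\cap K$ is a union of equivalence classes and is thus determined by a subset of $\widetilde{K}$. A $K$-diverse set $Y$ gives pairwise distinct subsets of $\widetilde{K}$, so $|Y|$ is at most the number of subsets of $\widetilde{K}$ that can actually arise as $N(y)\cap K$ for some $y\in N_G(K)$. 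Two subsets are automatically excluded: the full set $\widetilde{K}$ (else $K\cup\{y\}$ would be a clique, contradicting maximality of $K$) and the empty set (since $y\in N_G(K)$ has at least one neighbour in $K$). Hence $\dn{}{K}\leq 2^{|\widetilde{K}|}-2$, which rearranges to the claimed bound.

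The only subtle step is the distinguishing argument in the upper bound: one must be sure that any distinguisher of a non-twin pair in $K$ lies in $N_G(K)$ rather than in $K$ itself, and that maximality of $Y$ (rather than of $K$) lets one replace an arbitrary distinguisher by an element of $Y$ without losing its distinguishing behaviour. The maximality of $K$ is used only once, in the lower bound, to rule out the subset $\widetilde{K}$.
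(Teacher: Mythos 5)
Your proof is correct and follows essentially the same approach as the paper. The upper bound is argued almost identically (the paper states the same fact as the biconditional ``$u \sim_{\mathcal{K}(G)} v$ iff $N(u) \cap X = N(v) \cap X$'' rather than via the contrapositive/distinguisher phrasing, but the content is the same); your lower bound is a mild streamlining of the paper's, which routes through the maximal cliques containing $(N(x)\cap K)\cup\{x\}$ and argues about their intersections with $K$, whereas you observe directly that $N(y)\cap K$ is a non-empty proper subset of $K$ closed under $\sim_{\mathcal{K}(G)}$.
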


\begin{proof}
    Let~$X$ be a maximum $K$-diverse set in~$G$. 
    Observe that for every vertex~${y \in N(K) \setminus X}$ there exists a vertex~${x \in X}$ such that~${N(x) \cap K = N(y) \cap K}$ as~$X$ is maximum. 
    It follows that the classes of $\sim_{\mathcal{K}(G)}$ within~$K$ are exactly determined by the subsets of~$X$.
    To be more precise, for two vertices~${u,v \in K}$ we have~${u \sim_{\mathcal{K}(G)} v}$ if and only if~${N(u) \cap X = N(v) \cap X}$. 
    Therefore, $\abs{\widetilde{K}} \leq 2^{\abs{X}}=2^{\dn{}{K}}$.
    
    For the lower bound, simply observe that for every~${x \in X}$ the set~${(N(x) \cap K) \cup \{ x \}}$ belongs to some maximal clique of~$G$ which cannot contain any other vertex of~$X$. 
    Hence, we obtain~${\abs{X} = \dn{}{K}}$ many pairwise distinct maximal cliques of~$G$, all of which have a non-empty intersection with~$K$, and all these intersections are pairwise distinct. 
    Since~$K$ is maximal, $K$ is not a subset of any of these cliques. 
    Hence, $\widetilde{K}$ has at least~$\dn{}{K}$ proper non-empty subsets, so ${\dn{}{K} + 2 \leq 2^{\abs{\widetilde{K}}}}$, as required.
\end{proof}

To derive our forbidden induced subgraphs with the help of \cref{lemma:diverseneighbours}, we employ a couple of seminal results from Ramsey theory and the theory of vertex-minors. 

We consider all matrices over~$\mathds{F}_2$, and denote the \emph{adjacency matrix} of a graph~$G$ by~$\AM{G}$. 
For a set of vertices~$X$ in a graph~$G$ we define its \emph{cutrank}, denoted~$\cutrank{G}{X}$, as the rank of the submatrix of~$\AM{G}$ with rows~$X$ and columns~${\V{G}\setminus X}$. 
Given two disjoint sets of vertices~${X, Y \subseteq V(G)}$ we denote the \emph{local cutrank}, that is~${\cutrank{G[X \cup Y]}{X} = \cutrank{G[X \cup Y]}{Y}}$, by~${\localcutrank{G}{X,Y}}$. 
If the graph~$G$ is clear from the context, we drop the subscript.

Let~${X \subseteq V(G)}$ be some set of vertices in a graph~$G$.
Observe that the submatrix of~$\AM{G}$ with rows~$X$ and columns~${V(G) \setminus X}$ has exactly~$\dn{}{X}$ many distinct rows. 
Thus, the following lemma allows us to relate the diversity number of maximal cliques to their local cutrank. 

\begin{lemma}[\cite{NguyenO2020-averangecutrank}]
    \label{lemma:averagecut}
    Let~$M$ be a matrix over~$\mathds{F}_2$.
    Then the number of distinct rows in~$M$ is at most~${2^{\mathsf{rank}(M)}}$.
\end{lemma}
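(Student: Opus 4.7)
The plan is to exploit the fact that every row of $M$ lies in the row space of $M$, and then count the elements of that row space.

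First I would let $r = \mathsf{rank}(M)$. By the standard definition of rank (row rank equals column rank over any field, including $\mathds{F}_2$), the row space $\mathrm{Row}(M) \subseteq \mathds{F}_2^n$ (where $n$ is the number of columns of $M$) is a vector subspace of dimension exactly $r$. Since the ground field $\mathds{F}_2$ has precisely two elements, a vector space of dimension $r$ over $\mathds{F}_2$ contains exactly $2^r$ vectors: once a basis of size $r$ is fixed, every element of the space is uniquely expressed as an $\mathds{F}_2$-linear combination of the basis vectors, and there are $2^r$ such combinations.

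Next I would observe that each row of $M$ is, trivially, an element of the row space $\mathrm{Row}(M)$. Therefore the set of distinct rows of $M$ is a subset of $\mathrm{Row}(M)$, and hence its cardinality is at most $\lvert \mathrm{Row}(M) \rvert = 2^r = 2^{\mathsf{rank}(M)}$, which is the claimed bound.

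There is essentially no obstacle here: the entire content of the statement is the elementary fact that a finite-dimensional $\mathds{F}_2$-vector space of dimension $r$ has cardinality $2^r$, combined with the tautology that rows lie in the row space. The only subtlety one might flag is that the statement is specific to $\mathds{F}_2$: for a general field $\mathds{F}_q$ the same argument gives the bound $q^{\mathsf{rank}(M)}$, and for an infinite field the bound on distinct rows becomes vacuous. No combinatorial or structural graph-theoretic input is needed, and no induction or Ramsey-type argument is involved; the proof is a one-line application of linear algebra over $\mathds{F}_2$.
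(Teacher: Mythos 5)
Your proof is correct and complete. The paper does not actually include a proof of this lemma at all — it is cited from the reference as a known fact — so there is no in-paper argument to compare against. Your argument (every row lies in the row space, which is an $\mathds{F}_2$-vector space of dimension $\mathsf{rank}(M)$ and hence has exactly $2^{\mathsf{rank}(M)}$ elements) is the standard, essentially canonical proof of this statement, and your remarks about the generalisation to $\mathds{F}_q$ and the vacuity over infinite fields are accurate.
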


Indeed, combining \cref{lemma:averagecut} with \cref{lemma:diverseneighbours} yields the following.

\begin{corollary}
    \label{cor:diversitytorank}
    Let~$G$ be a graph and let~${K \in \mathcal{K}}$ be a maximal clique in~$G$, then ${\log(\log(\abs{\widetilde{K}})) \leq \localcutrank{}{K,N(K)}}$.
\end{corollary}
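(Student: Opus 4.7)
The plan is to chain the two preceding lemmas directly. First I would apply \cref{lemma:diverseneighbours} to obtain the upper bound $\abs{\widetilde K} \leq 2^{\dn{}{K}}$, which, on taking the logarithm, yields $\log\abs{\widetilde K} \leq \dn{}{K}$. This already reduces the problem to showing that $\log(\dn{}{K}) \leq \localcutrank{}{K,N(K)}$.

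Next, I would relate $\dn{}{K}$ to the local cutrank using \cref{lemma:averagecut}. The observation stated immediately before that lemma identifies $\dn{}{K}$ with the number of distinct columns of the submatrix of $\AM{G}$ with rows indexed by $K$ and columns indexed by $V(G)\setminus K$: each such column records the trace in $K$ of the neighbourhood of a vertex outside $K$, and two vertices contribute the same column precisely when they have identical neighbourhoods in $K$. Moreover, vertices lying outside $K \cup N(K)$ contribute only zero columns, so restricting the column set to $N(K)$ does not change the rank, and the rank of this submatrix is exactly $\localcutrank{}{K,N(K)}$. Applying \cref{lemma:averagecut} to the transpose of this submatrix — using that rank is invariant under transposition, so a bound on distinct rows yields the same bound on distinct columns — then delivers $\dn{}{K} \leq 2^{\localcutrank{}{K,N(K)}}$, and hence $\log(\dn{}{K}) \leq \localcutrank{}{K,N(K)}$.

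Chaining the two inequalities gives
\[
    \log(\log\abs{\widetilde K}) \,\leq\, \log(\dn{}{K}) \,\leq\, \localcutrank{}{K,N(K)},
\]
which is the desired conclusion. I do not anticipate any serious obstacle: the corollary is essentially a one-line consequence of the two preceding lemmas. The only point requiring any care is to keep straight the roles of rows versus columns in the definitions of cutrank and diversity number — a purely bookkeeping matter, since the rank (and hence the bound from \cref{lemma:averagecut}) is symmetric under transposition. Degenerate cases in which $\abs{\widetilde K} \leq 1$ can be handled by the usual convention that $\log 0 = -\infty$, so the stated inequality holds vacuously.
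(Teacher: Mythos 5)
Your proof is correct and follows essentially the same route as the paper: it simply chains \cref{lemma:diverseneighbours} with \cref{lemma:averagecut}, exactly as the paper indicates in the sentence preceding the corollary. You are in fact somewhat more careful than the paper in tracking the rows-versus-columns bookkeeping (the paper's preceding remark speaks of ``distinct rows'' where the traces of vertices in $N(K)$ inside $K$ are really the columns of the stated submatrix) and in noting that the zero columns contributed by vertices outside $N[K]$ affect neither the rank nor the distinct-column count.
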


Let~$B$ be a bipartite graph with a bipartition~${V_1 \cup V_2}$ such that both~$V_i$ induce independent sets. 
The \emph{biadjacency matrix} of~$B$ with respect to~$V_1$ and~$V_2$ is the submatrix of~$\AM{G}$ with rows~$V_1$ and columns~$V_2$.
The definitions of matchings ($\mathscr{M}$), anti-matchings ($\mathscr{A}$), and half-graphs ($\mathscr{H}$) provide natural bipartitions of their vertex sets, and their biadjacency matrices are unique up to reordering of rows and columns.

Let $G$ be a graph and ${X, Y \subseteq V(G)}$ be two disjoint sets of vertices.
We say that $X$ and $Y$ are \emph{coupled} if the submatrix of $\AM{G}$ with rows $X$ and columns $Y$ is the biadjacency matrix of a matching, anti-matching, or half-graph. 
Ding, Oporowski, Oxley, and Vertigan \cite{DingOOV1996-Unavoidable3-con} showed that any pair of sets with large cutrank between them must contain a large coupled pair of sets.

\begin{theorem}[\cite{DingOOV1996-Unavoidable3-con}]
    \label{thm:coupledpairs}
    There exists a function ${f_{\ref{thm:coupledpairs}} \colon \N \to \N}$ such that for every positive integer~$k$, every graph~$G$ and every pair of disjoint sets~${X,Y \subseteq V(G)}$ with~${\localcutrank{}{X,Y} \geq f_{\ref{thm:coupledpairs}}(k)}$ there exists a coupled pair of sets~${X' \subseteq X}$ and~${Y' \subseteq Y}$ with~${\abs{X'} = \abs{Y'} = k}$.
\end{theorem}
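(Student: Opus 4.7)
The plan is to convert the cutrank hypothesis into a full-rank square submatrix over~$\mathds{F}_2$, and then to extract one of the three coupled substructures by combining Dilworth/Mirsky-type arguments with Ramsey's theorem.

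First, I would use standard linear algebra: if $\localcutrank{}{X,Y} \geq R$, then there exist subsets $X_0 \subseteq X$ and $Y_0 \subseteq Y$, each of size $R$, such that the submatrix of $\AM{G}$ with rows $X_0$ and columns $Y_0$ has rank $R$ over $\mathds{F}_2$. In particular, the rows, viewed as subsets $N(x) \cap Y_0$ for $x \in X_0$, are $R$ distinct non-zero subsets of $Y_0$, and similarly for the columns. We will take $R = R(k)$ to be a tower-type function of $k$ dictated by the Ramsey steps below.

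Second, I would equip $X_0$ with the quasi-order $x \preceq x'$ iff $N(x) \cap Y_0 \subseteq N(x') \cap Y_0$, and apply a Mirsky/Dilworth-type dichotomy to the associated partial order on $\sim$-classes (noting that distinct rows lie in distinct classes by full rank). If we obtain a chain $x_{i_1} \prec x_{i_2} \prec \cdots \prec x_{i_k}$, then by picking, for each $j \geq 2$, a witness $y_{i_j} \in (N(x_{i_j}) \setminus N(x_{i_{j-1}})) \cap Y_0$, and a witness $y_{i_1} \in N(x_{i_1})$, a short combinatorial cleanup yields a half-graph of order $k$ between $\{x_{i_1},\dots,x_{i_k}\}$ and $\{y_{i_1},\dots,y_{i_k}\}$, which is one of the three target patterns. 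Otherwise, we extract an antichain $A \subseteq X_0$ of size still doubly-exponential in $k$, whose rows are pairwise incomparable subsets of $Y_0$.

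Third, in the antichain case I would use a Ramsey-type argument on pairs and triples of indices to locate either a matching or an anti-matching. For each $x \in A$, we iteratively select a ``witness'' $y_x \in Y_0$ together with a large subfamily $A' \subseteq A$ ensuring that $y_x$ is adjacent to $x$ but not to any $x' \in A'$ (or vice versa), which is possible by the incomparability. Colouring the unordered pairs $\{x,x'\}$ within the surviving subfamily by the $2 \times 2$ pattern on $\{x,x'\} \times \{y_x, y_{x'}\}$ and invoking Ramsey's theorem, we obtain a monochromatic subfamily of size $k$. Together with the full-rank structure, the four possible monochromatic types collapse to either a matching (when off-diagonal entries vanish) or an anti-matching (when diagonal entries vanish), yielding the second and third target patterns.

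The main obstacle is the coordinated selection of witnesses in the third step: for the Ramsey colouring on pairs to correspond cleanly to a coupled pattern on the \emph{whole} subfamily, we need the witnesses $y_x$ to interact uniformly across all of $A'$, not merely pairwise. This forces an inductive ``greedy peeling'' of $A$, at each step fixing a witness for one element and restricting to the portion of $A$ on which that witness behaves consistently, which is where most of the Ramsey blow-up enters the bound. The resulting function $f_{\ref{thm:coupledpairs}}(k)$ is consequently a tower function in $k$, corresponding to the composition of the Mirsky dichotomy, the iterated peeling, and the final application of Ramsey's theorem.
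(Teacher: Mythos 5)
This theorem is cited by the paper from Ding, Oporowski, Oxley, and Vertigan~\cite{DingOOV1996-Unavoidable3-con}; the paper contains no proof of its own, so there is nothing internal to compare your argument against, only the correctness of the argument itself.

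Your chain case is sound: with a strict chain $N(x_{i_1}) \subsetneq \cdots \subsetneq N(x_{i_k})$ restricted to~$Y_0$, picking $y_{i_1} \in N(x_{i_1})$ and $y_{i_j} \in N(x_{i_j}) \setminus N(x_{i_{j-1}})$ for $j \geq 2$ automatically yields distinct witnesses, and nestedness forces $y_{i_j} \sim x_{i_\ell}$ precisely when $\ell \geq j$, which is the half-graph. No further cleanup is actually needed there.

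The antichain case, however, has a genuine gap, and it is precisely in the ``greedy peeling'' you flag as the main obstacle. After passing to an $R \times R$ full-rank submatrix, the antichain $A$ lives among at most $R$ rows, and the ground set for the witnesses has size $|Y_0| = R$ as well. One peeling step chooses a witness $y_{x}$ among at most $2R$ candidate $(y,\text{side})$ pairs, so by pigeonhole the surviving subfamily has size only at least $(|A|-1)/(2R)$. With $|A| \leq R$ this is already below $1$ after a single step, regardless of how large you take $R = f(k)$; the inequality $R \geq (2R)^k$ fails for every $k \geq 1$. Increasing $R$ does not help, because both the family size and the divisor scale linearly in $R$. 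So the peeling is not a matter of incurring a tower-type blow-up in the bound — it simply cannot be iterated at all. The underlying difficulty is that after extracting an $R\times R$ square submatrix you only have $R$ distinct rows, whereas the pigeonhole in the peeling would need the number of rows to be superpolynomial in $|Y_0|$. A Ramsey colouring of pairs alone does not repair this either, since (as you observe yourself) a monochromatic pair colouring only controls witnesses pairwise and does not produce a single consistent witness per row across the whole subfamily. Closing this gap requires a structurally different extraction in the antichain case, e.g.\ one that shrinks the column set in lockstep with the row family or exploits $\F_2$-linear independence more directly rather than via antichain pigeonholing.
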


The last bit of technology we need are the standard \emph{Ramsey numbers} \cite{Ramsey1929}. 
We denote by~$R(k)$ the Ramsey number that guarantees that every graph of size at least~$R(k)$ contains a clique or an independent set of size at least~$k$.

With this, we are ready to state and prove the main result of this \namecref{sec:forbiddeninducedsubgraphs}.

\begin{theorem}\label{thm:obstructionsforcid}
    Let $\mathfrak{A}$ denote the set~${\{\matchingKI,\matchingKK,\antimatchingKI,\antimatchingKK,\halfgraphKI,\halfgraphKK\}}$.
    Then the graph parameters $\bigPG{\mathfrak{A}}{\InducedSubgraph}{}$ and $\reduced{\omega}{}$ are equivalent.
\end{theorem}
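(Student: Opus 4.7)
The plan is to establish two inequalities linking $\bigPG{\mathfrak{A}}{\InducedSubgraph}{\cdot}$ and $\reduced{\omega}{\cdot}$: a trivial linear lower bound $\reduced{\omega}{G} \geq \bigPG{\mathfrak{A}}{\InducedSubgraph}{G}$ coming from each graph in $\mathfrak{A}$ directly witnessing a high incidence-diversity, and a doubly-exponential upper bound obtained from chaining \cref{cor:diversitytorank}, \cref{thm:coupledpairs}, and Ramsey's theorem.

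For the easier direction, I will show that if $\mathscr{F}_k \InducedSubgraph G$ for some $\mathscr{F} \in \mathfrak{A}$, then $\reduced{\omega}{G} \geq k$. Given an embedding with ``clique side'' $X = \{x_1,\dots,x_k\}$ and opposite side $Y = \{y_1,\dots,y_k\}$, the set $X$ extends to a maximal clique $K^* \in \mathcal{K}(G)$, so it suffices to check that $x_1,\dots,x_k$ lie in pairwise distinct equivalence classes of $\sim_{\mathcal{K}(G)}$. For any $i \neq j$, the combinatorial pattern between $X$ and $Y$ (matching, anti-matching, or half-graph) produces a vertex $y_\ell \in Y$ adjacent in $G$ to exactly one of $x_i,x_j$; any maximal clique of $G$ extending the edge between $y_\ell$ and the unique neighbour then misses the other vertex, because the relevant non-edge is preserved by the induced embedding of $\mathscr{F}_k$.

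For the reverse direction, I would start from a graph $G$ with $\reduced{\omega}{G} \geq N$ and pick a maximal clique $K \in \mathcal{K}(G)$ witnessing $\abs{\widetilde{K}} \geq N$. \cref{cor:diversitytorank} then yields $\localcutrank{}{K,N(K)} \geq \log \log N$, and choosing $N$ large enough that $\log \log N \geq f_{\ref{thm:coupledpairs}}(k)$, \cref{thm:coupledpairs} delivers coupled sets $X \subseteq K$ and $Y \subseteq N(K)$ of size $k$. Since $X \subseteq K$, the induced subgraph $G[X]$ is already a clique, and the bipartite subgraph between $X$ and $Y$ is (up to reindexing of rows and columns) a matching, anti-matching, or half-graph. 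Choosing $k \geq R(m)$ and applying Ramsey's theorem to $G[Y]$, I obtain indices $i_1 < \cdots < i_m$ such that $\{y_{i_1},\dots,y_{i_m}\}$ induces either a clique or an independent set. Setting $X' = \{x_{i_s}\}_{s=1}^m$ and $Y' = \{y_{i_s}\}_{s=1}^m$ then exhibits $G[X' \cup Y']$ as a member of $\mathfrak{A}$ of order $m$, yielding a function $g$ with $\reduced{\omega}{G} \leq g(\bigPG{\mathfrak{A}}{\InducedSubgraph}{G})$.

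The main obstacle is essentially bookkeeping: verifying that restricting the coupled pair to matched indices on both sides preserves the matching/anti-matching/half-graph pattern. This is immediate from permutation symmetry for matchings and anti-matchings, and for half-graphs it follows because restricting a half-graph's biadjacency matrix to the same row and column index set again yields a half-graph. Unrolling the parameters through \cref{thm:coupledpairs} and $R(\cdot)$ produces the explicit (towers-of-twos in $m$) function $g$ witnessing the equivalence.
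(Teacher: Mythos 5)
Your proposal is correct and follows essentially the same route as the paper: the easy direction uses hereditariness of $\reduced{\omega}{}$ (which you re-derive in the special case at hand by exhibiting, for each pair of clique-side vertices, a $y_\ell$ whose adjacency distinguishes them), and the hard direction chains \cref{cor:diversitytorank}, \cref{thm:coupledpairs}, and Ramsey's theorem on the non-clique side exactly as the paper does, with the same observation that restricting the coupled pair to a common index set preserves the matching, anti-matching, or half-graph pattern.
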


\begin{proof}
    Let~${\mathscr{X} \in \mathfrak{A}}$ and~$t$ be a positive integer.
    Notice that~$\mathscr{X}_t$ has a natural bipartition of its vertex set, induced by the definition, into a clique~$X$ and a set~$Y$ which is either a clique or an independent set such that~$X$ and~$Y$ are coupled and both of size~$t$. 
    Moreover, note that in all cases, ${\widetilde{\mathscr{X}_t} = \mathscr{X}_t}$ since~$\mathscr{X}_t$ has no true twins. 
    It is straight forward to observe that $\reduced{\omega}{}$ is monotone under the induced subgraph relation, and so if~$\mathscr{X}_t$ is an induced subgraph of a graph~$G$, then~${\reduced{\omega}{G} \geq \reduced{\omega}{\mathscr{X}_t} = \omega(\mathscr{X}_t) \geq t}$.
    Hence, we obtain~$\bigPG{\mathfrak{A}}{\InducedSubgraph}{G} \leq \reduced{\omega}{G}$ for all graphs~$G$.
    
    To prove the other direction, let~$G$ be a graph satisfying
    $\reduced{\omega}{G} \geq 2^{2^{f_{\ref{thm:coupledpairs}}(R(t))}}$. 
    We claim that there exists some~${\mathscr{X} \in \mathfrak{A}}$ such that~$G$ contains~$\mathscr{X}_t$ as an induced subgraph. 
    
    Let~${K \in \mathcal{K}(G)}$ be some maximal clique of~$G$ with~${\abs{\widetilde{K}}} \geq 2^{2^{f_{\ref{thm:coupledpairs}}(R(t))}}$ and let~${X \coloneq N(K)}$ be the neighbourhood of~$K$. 
    By \cref{cor:diversitytorank} this implies~${\localcutrank{}{K,X} \geq f_{\ref{thm:coupledpairs}}(R(t))}$. 
    We now call upon \cref{thm:coupledpairs} to find sets~${Y \subseteq K}$ and~${Z \subseteq X}$ such that~$Y$ and~$Z$ are coupled and~${\abs{Y} = \abs{Z} = R(t)}$.
    Let~$H$ be the subgraph of~$G$ induced by the edges\footnote{That is, the vertex set of~$H$ is~${Y \cup Z}$ and the edge set of~$H$ is exactly those edges with exactly one end in~$Y$ and the other end in~$Z$.} between~$Y$ and~$Z$.
    Notice that there exists a bijective function~${\phi \colon Z \to Y}$ such that for each~${z \in Z}$, ${\phi(z)z \in E(G)}$ if~$H$ is a matching, ${\phi(z)z \notin E(G)}$ if~$H$ is an anti-matching, and, if~$H$ is a half-graph, $\phi$ ensures that, for the ordering of the sets~${Y = \{ y_1,y_2,\dots,y_{\abs{Y}}\}}$ and~${Z = \{ z_1,z_2,\dots,z_{\abs{Z}}\}}$ induced by the definition of half-graphs, it holds that~${\phi(z_i) = y_i}$. 
    As~${\abs{Z} = R(t)}$ we find a set~${Z' \subseteq Z}$ which is either an independent set or a clique. 
    Let~${Y' \coloneqq \{ \phi(z) \mid z \in Z' \}}$. 
    Notice that~$Y'$ and~$Z'$ are again coupled.
    Hence, ${G[Y' \cup Z']}$ is isomorphic to one of~$\matchingKI_t$, $\matchingKK_t$, $\antimatchingKI_t$, $\antimatchingKK_t$, $\halfgraphKI_t$ or~$\halfgraphKK_t$, as desired.
\end{proof}

As a direct consequence, we obtain a forbidden subgraph characterisation for bounding~$\maxP{\cideg{},\reduced{\omega}{}}{}$ (or any of the equivalent parameters from \cref{cor:parameterequivalence1}). 

\begin{corollary}
    \label{cor:parameterequivalence2}
    Let $\mathfrak{B}$ denote the set~${\{ \matchingKI,\matchingKK,\antimatchingKK,\halfgraphKK \} \cup \{ \Star \}}$.
    Then the graph parameter $\bigPG{\mathfrak{B}}{\InducedSubgraph}{}$ is equivalent to each of~$\reduced{\Delta}{}$, 
    $\cdeg{}$, 
    $\maxP{\cideg{}, \reduced{\omega}{}}{}$,
    $\maxP{\local{\alpha}{}, \reduced{\omega}{}}{}$, and
    $\maxP{\local{\theta}{}, \reduced{\omega}{}}{}$. 
\end{corollary}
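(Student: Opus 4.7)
The plan is to reduce to showing that $\bigPG{\mathfrak{B}}{\InducedSubgraph}{}$ is equivalent to $\maxP{\local{\alpha}{}, \reduced{\omega}{}}{}$; the equivalence with the other four parameters $\reduced{\Delta}{}$, $\cdeg{}$, $\maxP{\cideg{}, \reduced{\omega}{}}{}$, and $\maxP{\local{\theta}{}, \reduced{\omega}{}}{}$ will then be immediate from \cref{cor:parameterequivalence1}. The proof then splits into two inequality directions, one of which is essentially bookkeeping and the other requires an appeal to \cref{thm:obstructionsforcid}.

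For the easy direction, namely $\bigPG{\mathfrak{B}}{\InducedSubgraph}{G} \leq \maxP{\local{\alpha}{}, \reduced{\omega}{}}{G}$, I would verify for each of the five parametric graphs in $\mathfrak{B}$ that a copy of order $t$ forces the right-hand side to be at least $t$. For $\Star_t$, the center has an independent set of size $t$ in its open neighbourhood, giving $\local{\alpha}{\Star_t} \geq t$. For each of $\matchingKI_t$, $\matchingKK_t$, $\antimatchingKK_t$, and $\halfgraphKK_t$, the clique $X_t$ (respectively a maximal clique containing $X_t$, in the half-graph case) has $t$ pairwise $\sim_{\mathcal{K}}$-inequivalent vertices, since the vertices $x_1, \dots, x_t$ are separated by their neighbours in $Y_t$; hence $\reduced{\omega}{\mathscr{X}_t} \geq t$. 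Since both $\local{\alpha}{}$ and $\reduced{\omega}{}$ are monotone under induced subgraphs, the claimed inequality follows.

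For the hard direction, assume $\maxP{\local{\alpha}{}, \reduced{\omega}{}}{G}$ is large and split into two cases. If $\local{\alpha}{G} \geq k$, take $v$ together with a maximum independent set $I \subseteq N_G(v)$ of size $k$; then $\{v\} \cup I$ induces $\Star_k$. Otherwise $\reduced{\omega}{G}$ is large, and \cref{thm:obstructionsforcid} yields an induced copy of some $\mathscr{X}_t \in \mathfrak{A}$ with $t$ growing as a function of $\reduced{\omega}{G}$. If $\mathscr{X} \in \mathfrak{B}$ we are done. The only remaining cases are $\mathscr{X} = \antimatchingKI$ and $\mathscr{X} = \halfgraphKI$, and these are handled by the trivial observation that both of these parametric graphs already contain large induced stars: in $\halfgraphKI_t$, the vertex $x_1$ is adjacent to every vertex of the independent set $\{y_1, \dots, y_t\}$, giving an induced $\Star_t$; in $\antimatchingKI_t$, the vertex $x_1$ is adjacent to precisely $\{y_2, \dots, y_t\}$, an independent set of size $t-1$, giving an induced $\Star_{t-1}$. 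In either subcase $\bigPG{\mathfrak{B}}{\InducedSubgraph}{G}$ is at least $t-1$, which tends to infinity with $\reduced{\omega}{G}$. Combining the two cases and composing with the functions from \cref{thm:obstructionsforcid} and \cref{cor:parameterequivalence1} yields the desired equivalence. The only step with any real content is the observation that $\antimatchingKI_t$ and $\halfgraphKI_t$ each contain an induced star of order $\Omega(t)$, which explains why one can remove $\antimatchingKI$ and $\halfgraphKI$ from the list $\mathfrak{A}$ at the cost of adding $\Star$.
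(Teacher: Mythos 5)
Your proposal is correct and follows essentially the same route as the paper: both directions hinge on \cref{thm:obstructionsforcid}, the observation that $\antimatchingKI$ and $\halfgraphKI$ contain large induced stars (which is what lets one drop them from $\mathfrak{A}$ at the cost of adding $\Star$), and a final appeal to \cref{cor:parameterequivalence1}. The only cosmetic difference is that in the easy direction you re-verify $\reduced{\omega}{\mathscr{X}_t}\ge t$ for each member of $\mathfrak{B}$ by hand, where the paper simply notes $\mathfrak{B}\subseteq\mathfrak{A}\cup\{\Star\}$ and reuses the easy direction of \cref{thm:obstructionsforcid}; you are also slightly more careful about the star counting inside $\antimatchingKI_t$ (getting $\Star_{t-1}$ rather than the paper's claimed $\Star_{t+1}$ inside $\antimatchingKI_{t+1}$, an off-by-one in the paper that is immaterial to the asymptotic statement).
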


\begin{proof}
    Let~$\mathfrak{A}$ be as in~\cref{thm:obstructionsforcid}, and let~${\mathfrak{C} \coloneqq \mathfrak{A} \cup \{ \Star \}}$. 

    First, let~$t$ be a positive integer and let~$G$ be a graph such that~${\maxP{\local{\alpha}{},\reduced{\omega}{}}{G} \leq t}$. 
    We obtain from \cref{thm:obstructionsforcid} that~${\bigPG{\mathfrak{A}}{\InducedSubgraph}{G} \leq f(t)}$ for some function~$f$, for which we may assume without loss of generality, that~${f(t) \geq t}$. 
    Since~${\bigPG{\Star}{\InducedSubgraph}{G} = \local{\alpha}{G} \leq t}$, we additionally observe that~${\bigPG{\mathfrak{C}}{\InducedSubgraph}{G} \leq f(t)}$, and since~${\mathfrak{B} \subseteq \mathfrak{C}}$, we conclude~${\bigPG{\mathfrak{B}}{\InducedSubgraph}{G} \leq f(t)}$, as desired. 

    On the other hand, let~$t$ be a positive integer and let~$G$ be a graph such that~${\bigPG{\mathfrak{B}}{\InducedSubgraph}{G} \leq t}$. 
    Note that since both~$\antimatchingKI_{t+1}$ and~$\halfgraphKI_{t+1}$ contain~$\Star_{t+1} = K_{1,t+1}$ as an induced subgraph, it immediately follows that~${\bigPG{\mathfrak{A}}{\InducedSubgraph}{G} \leq t}$. 
    Hence, as seen in the proof of \cref{thm:obstructionsforcid}, ${\reduced{\omega}{G} \leq t}$. 
    But since~${\bigPG{\Star}{\InducedSubgraph}{G} = \local{\alpha}{G} \leq t}$, we obtain that~${\maxP{\local{\alpha}{},\reduced{\omega}{}}{G} \leq t}$. 
    
    By \cref{cor:parameterequivalence1}, all the desired equivalences follow. 
\end{proof}

Note that this corollary now finishes the proof of \cref{thm:clique-sparse}. 

\section{%
\texorpdfstring{Rankwidth versus $\boldsymbol{\alpha}$-treewidth}{Rankwidth versus alpha-treewidth}}
\label{sec:rankwidth}

In this section, we consider classes obtained by excluding some parametric graphs and how this affects the relation between rankwidth and $\alpha$-treewidth.
We start with defining the rankwidth of a graph, a concept first introduced by Oum and Seymour~\cite{OumS2006-cliquewidth-branchwidth-rankwidth}. 

A \emph{rank-decomposition} of a graph~$G$ is a tuple $\Brace{T,\delta}$, where~$T$ is a cubic tree and~$\delta$ is a bijection between the leaves of~$T$ and the vertices of~$G$. 
For every edge~$e$ of~$T$, note that the two subtrees of~${T-e}$ induce a bipartition of the leaves of~$T$. 
Let~$X_e$ denote the image of one of those sets of leaves under~$\delta$. 
The \emph{cutrank} of~$e$ is defined as the cutrank of~$X_e$, i.e.~$\cutrank{G}{X_e}$ (see \cref{sec:forbiddeninducedsubgraphs} for a definition of the cutrank of a set of vertices). 
Note that this is well-defined as~$\cutrank{G}{X_e} = \cutrank{G}{V(G) \setminus X_e}$. 
The \emph{width} of $\Brace{T,\delta}$ is defined as the maximum cutrank of one of the edges of~$T$ and the \emph{rankwidth} of~$G$, denoted~$\rankwidth{G}$, is the minimum width over all rank decompositions of~$G$.

We observe the following. 
\begin{observation}
    \label{obs:cutrank}
    Let~$G$ be a graph. For every~${X \subseteq V(G)}$, we have~$\cutrank{G}{\bigcup \widetilde{X}} = \cutrank{\widetilde{G}}{\widetilde{X}}$. 
\end{observation}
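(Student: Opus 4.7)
The plan is to prove this by passing from the cut matrices to their quotients by the equivalence relation $\sim_{\mathcal{K}(G)}$. First I would note that both $\bigcup \widetilde{X}$ and $V(G) \setminus \bigcup \widetilde{X}$ are unions of equivalence classes of $\sim_{\mathcal{K}(G)}$: the former by construction, and hence also its complement, since the equivalence classes partition $V(G)$. Let $M$ denote the submatrix of $\AM{G}$ with rows indexed by $\bigcup \widetilde{X}$ and columns indexed by $V(G) \setminus \bigcup \widetilde{X}$, so that by definition $\cutrank{G}{\bigcup \widetilde{X}} = \mathsf{rank}(M)$.

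The key step is to show that $M$ has many repeated rows and columns, grouped precisely by equivalence classes. Whenever $v, w \in \bigcup \widetilde{X}$ are equivalent, each column index $u \in V(G) \setminus \bigcup \widetilde{X}$ lies in an equivalence class disjoint from $\widetilde{v} = \widetilde{w}$, so applying (\cref{eq:ast}) twice yields
\[
    uv \in E(G) \iff \widetilde{u}\widetilde{v} \in E(\widetilde{G}) \iff \widetilde{u}\widetilde{w} \in E(\widetilde{G}) \iff uw \in E(G).
\]
Hence the rows of $M$ indexed by $v$ and $w$ coincide, and the symmetric argument for equivalent pairs of vertices in $V(G) \setminus \bigcup \widetilde{X}$ shows that the corresponding columns of $M$ are identical as well.

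From this, it follows that $\mathsf{rank}(M)$ equals $\mathsf{rank}(M')$, where $M'$ is the reduced matrix obtained by keeping exactly one representative row per class in $\widetilde{X}$ and one representative column per class in $V(\widetilde{G}) \setminus \widetilde{X}$. A final application of (\cref{eq:ast}) identifies $M'$ with the submatrix of $\AM{\widetilde{G}}$ having rows $\widetilde{X}$ and columns $V(\widetilde{G}) \setminus \widetilde{X}$, whose rank is $\cutrank{\widetilde{G}}{\widetilde{X}}$, completing the chain of equalities. There is no genuine obstacle in this proof; the only thing requiring care is the identification of repeated rows and columns, which is a direct consequence of the true-twin property of equivalent vertices together with (\cref{eq:ast}).
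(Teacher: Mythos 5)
Your proof is correct. The paper states \cref{obs:cutrank} without proof, and your argument is exactly the intended direct verification: since $V(G)\setminus\bigcup\widetilde{X}$ is also a union of $\sim_{\mathcal{K}(G)}$-classes, \cref{eq:ast} applies to every entry of the cut matrix, equivalent vertices (being true twins) produce duplicate rows and columns, and deleting duplicates both preserves rank and leaves precisely the cut matrix of $\widetilde{G}$ between $\widetilde{X}$ and $V(\widetilde{G})\setminus\widetilde{X}$.
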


As a first result of this section, we observe that bounding both the rankwidth and the local independence number is sufficient to bound the $\alpha$-treewidth of a graph.

\begin{theorem}
    \label{thm:rw-vs-alpha-tw1}
    For every positive integers~$d$ and~$k$ and every graph~$G$ with~${\local{\alpha}{G} \leq d}$ and~${\rankwidth{G} \leq k}$ we have~${\ptw{\alpha}{G} \leq 3dk}$. 
\end{theorem}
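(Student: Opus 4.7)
The plan is to convert a rank-decomposition $(T,\delta)$ of $G$ of width $k$ into a tree-decomposition $(T,\beta)$ of $\alpha$-width at most $3dk$, using $T$ itself. For each internal node $t$ of the cubic tree $T$, removing $t$ yields three subtrees whose leaves induce a partition $V(G) = V_1(t) \cup V_2(t) \cup V_3(t)$ via $\delta$. I define $\beta(\ell) := \{\delta(\ell)\}$ at each leaf $\ell$ of $T$, and
\[
    \beta(t) := \bigcup_{i=1}^3 \bigl(V_i(t) \cap N_G(V(G) \setminus V_i(t))\bigr)
\]
at each internal node $t$ (the boundary vertices on each of the three sides).

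The key auxiliary lemma I would isolate is: for every separation $(A,B)$ of $G$ with $\cutrank{G}{A} \leq r$, we have $\alpha(G[A \cap N(B)]) \leq r \cdot \local{\alpha}{G}$. To prove this, let $M$ be the biadjacency matrix between $A$ and $B$ and pick $r$ columns (say indexed by $D=\{b_1,\dots,b_r\} \subseteq B$) that form a basis of the column space over $\mathds{F}_2$. I claim that $D$ dominates $A \cap N(B)$: given $a \in A\cap N(B)$ with some neighbour $b \in B$, writing $\mathrm{col}(b) = \sum_{i\in S}\mathrm{col}(b_i)$ we get $1 = M[a,b] = \sum_{i\in S} M[a,b_i]$, so $a$ is adjacent to at least one $b_i$. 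Now every independent set $I \subseteq A \cap N(B)$ satisfies $I \subseteq \bigcup_{i=1}^r (I \cap N(b_i))$, and each $I \cap N(b_i)$ is an independent set contained in $N[b_i]$, of size at most $\local{\alpha}{G} = d$; hence $|I| \leq rd$.

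Applying this lemma at each internal node $t$ for $A = V_i(t)$ and $B = V(G)\setminus V_i(t)$ (whose cutrank is the cutrank of the corresponding edge of $T$, and hence at most $k$), the three sets forming $\beta(t)$ each have independence number at most $kd$. Since an independent set of $\beta(t)$ splits as a disjoint union of independent sets in the three parts, $\alpha(G[\beta(t)]) \leq 3kd$; the leaf bags have $\alpha = 1 \leq 3kd$.

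It remains to check that $(T,\beta)$ is a valid tree-decomposition. Coverage holds at the leaves by construction. For an edge $uv \in E(G)$, the unique internal neighbour $t_u$ of $\ell_u := \delta^{-1}(u)$ in $T$ puts $u$ alone in one part and $v$ in a different part, so both $u$ and $v$ are boundary vertices at $t_u$ and therefore lie in $\beta(t_u)$. For the subtree axiom, the set of nodes whose bag contains a fixed vertex $v$ is $\{\ell_v\}$ together with the internal nodes separating $\ell_v$ from $\ell_w$ for some $w \in N(v)$; this is exactly the Steiner tree in $T$ spanning $\{\ell_v\}\cup\{\ell_w : w \in N(v)\}$ minus the leaves $\ell_w$ with $w \neq v$, which remains a subtree since deleting leaves preserves connectedness. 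The main substantive step is the key lemma, where the dominating-set-from-an-$\mathds{F}_2$-basis argument is the only nontrivial ingredient; the remaining verifications are routine.
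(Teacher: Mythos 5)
Your construction of the tree-decomposition $(T,\beta)$ is exactly the one in the paper (bags consisting of the boundary vertices of the three sides at each internal node of the rank-decomposition tree), and your verification of the tree-decomposition axioms is a cleaner packaging of the same reasoning (the bag-membership set of a vertex is a union of paths sharing a common endpoint, hence connected). Where you genuinely diverge is in bounding $\alpha(G[\beta(t)])$. The paper argues by contradiction and pigeonhole: an independent set $A$ with $\abs{A} > 3dk$ has $\abs{A \cap X_t^i} > dk$ on some side; a minimal $B$ on the other two sides with $A \cap X_t^i \subseteq N(B)$ then has $\abs{B} \geq k+1$, and the private neighbours coming from minimality produce an induced ``identity'' submatrix of the biadjacency matrix, contradicting cutrank $\leq k$. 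You instead isolate a reusable lemma: an $\mathds{F}_2$-column-basis $D$ of size at most $r$ of the biadjacency matrix across a bipartition dominates every vertex of $A$ with a neighbour in $B$ (since $M[a,b]=1$ forces $M[a,b_i]=1$ for at least one basis column $b_i$ in the expansion of $\mathrm{col}(b)$), and an independent set on one side of the boundary is then covered by $\leq r$ closed neighbourhoods, each contributing at most $d$; summing over the three sides gives $3dk$. Both arguments are correct and give the same constant; yours is arguably more transparent because it exposes the structural fact ``small cutrank $\Rightarrow$ boundary is dominated by few vertices'' rather than proving a matching lower bound by contradiction. One small terminological slip: you call $(A,B)$ a \emph{separation}, but in the paper's usage a separation of $G$ has no edges between $A\setminus B$ and $B\setminus A$; what you mean (and what you use) is simply a bipartition of $V(G)$ into $A$ and $B = V(G)\setminus A$. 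With that wording fixed, the argument is complete.
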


\begin{proof}
    Let~$G$ be a graph with~${\local{\alpha}{G} \leq d}$ and~${\rankwidth{G} \leq k}$. 
    Let~$(T,\delta)$ be a rank-decomposition of~$G$ of width at most~$k$. 
    We define a tree-decomposition~$(T,\beta)$ of~$G$ as follows. 
    For each leaf~$t$ of~$T$, we set~${\beta(t) \coloneqq \delta(t)}$. 
    For each internal node~$t$ of~$T$, consider the sets~$X_t^1$, $X_t^2$, $X_t^3$ of vertices mapped to the leaves of the three components of~${T - t}$, and define~${\beta(t)}$ as the set of those vertices~$v$ in~$X_t^1 \cup X_t^2 \cup X_t^3$ for which there are~$i,j \in [3]$ with~${i \neq j}$ and a neighbour~${w \in N_G(v)}$ such that~${v \in X_t^i}$ and~${w \in X_t^j}$. 
    We claim that~$(T,\beta)$ is a tree-decomposition of~$G$. 
    Clearly, ${\bigcup_{t \in V(T)} \beta(t) = V(G)}$. 
    Moreover, for every edge~${vw \in E(G)}$, both~$v$ and~$w$ are contained in~$\beta(t)$ for any internal node~$t$ for which~$\delta^{-1}(v)$ and~$\delta^{-1}(w)$ are in different components of~${T - t}$. 
    Since the latter holds in particular for the neighbour~$t'$ of~$\delta^{-1}(v)$ in~$T$, we have~${vw \subseteq \beta(t')}$. 
    Lastly, suppose that~${v \in \beta(t_1) \cap \beta(t_2)}$ for two non-adjacent nodes~$t_1$ and~$t_2$ of~$T$, and let~$t_3$ be any node of~$T$ on the unique path between~$t_1$ and~$t_2$ in~$T$. 
    Consider~${(T - t_1) - t_2}$, which has five components, two adjacent to only~$t_1$, two adjacent to only~$t_2$, and one adjacent to both~$t_1$ and~$t_2$. 
    For~${i \in [2]}$, let~$Z_i$ denote the union of the components adjacent to only~$t_i$, and let~$Z_3$ denote the component adjacent to both~$t_1$ and~$t_2$, which contains~$t_3$. 
    If~$\delta^{-1}(v)$ is contained in~$Z_i$ for~${i \in [2]}$, then, since~${v \in \beta(t_{2-i})}$, there is a neighbour~$w$ of~$v$ with~${\delta^{-1}(w) \in V(Z_{2-i})}$. 
    And since both~$Z_1$ and~$Z_2$ are contained in distinct components of~${T - t_3}$, we conclude~${v \in \beta(t_3)}$. 
    So assume~$\delta^{-1}(v)$ is contained in~$Z_3$. 
    Then, since~${v \in \beta(t_1) \cap \beta(t_2)}$, there are neighbours~$w_1$ and~$w_2$ of~$v$ with~${\delta^{-1}(w_1) \in V(Z_1)}$ and~${\delta^{-1}(w_2) \in V(Z_2)}$. 
    And since both~$Z_1$ and~$Z_2$ are contained in distinct components of~${T - t_3}$, one of them is distinct to the component of~${T - t_3}$ containing~$\delta^{-1}(v)$. 
    Therefore,~${v \in \beta(t_3)}$.
    Thus we conclude that~${(T,\beta)}$ is a tree-decomposition of~$G$. 

    Finally, we prove that~${\alpha(G[\beta(t)]) \leq 3dk}$ for each node~${t \in V(T)}$. 
    Since the bags of leaves of~$T$ have size~$1$, they have independence number~$1$ as well. 
    For an internal node~$t$, again consider~$X_t^1$, $X_t^2$, $X_t^3$ as above, and let~$A$ be a maximum independent set of~$G[\beta(t)]$. 
    Suppose for a contradiction that~${\abs{A} \geq 3dk+1}$. 
    By the pigeonhole principle, there is an~${i \in [3]}$ such that~${\abs{A \cap X_t^i} \geq dk+1}$, say~${i=1}$. 
    By definition, every~${v \in X_t^1}$ has a neighbour in~${X_t^2 \cup X_t^3}$. 
    Since~${\local{\alpha}{G} \leq d}$, any vertex in~${X_t^2 \cup X_t^3}$ can have at most~$d$ vertices of~$A$ in its neighbourhood. 
    Let~${B \subseteq X_t^2 \cup X_t^3}$ be minimal such that~${A \subseteq N_G(B)}$. 
    By the observation above, ${\abs{B} \geq k+1}$. 
    But by the minimality of~$B$, each~${w \in B}$ has a neighbour in~$A$ that is not adjacent to any other~${w'\in B}$. 
    Hence, there is a matching of size~$k+1$ between~$X_t^1$ and~${X_t^2 \cup X_t^3}$. 
    But such a matching witnesses that~$\cutrank{G}{X_t^1} \geq k+1$, contradicting that the rankwidth of~$G$ is at most~$k$.
\end{proof}

The complete bipartite graph~$K_{n,n}$ is a graph of rankwidth~$1$ and $\alpha$-treewidth~$n$. 
The $(n \times n)$-grid is a graph of local-independence number~$4$ and $\alpha$-treewidth roughly~$n/2$.
To see this let $n\geq 3$ and $G_n$ be the $(n \times n)$-grid.
Notice that $G_n=\widetilde{G_n}$ since $G_n$ is a bipartite graph without true twins.
It now follows from \cref{lemma:width-comparison} that $\ptw{\alpha}{G_n}\leq \reduced{\tw{}}{G} = \tw{G} \leq 2\cdot \ptw{\theta}{G_n}$.
Moreover, since bipartite graphs are perfect, it also holds that $\theta(H)=\alpha(H)$ for all induced subgraphs $H$ of $G_n$.
Hence $\ptw{\alpha}{G_n}=\ptw{\theta}{G_n}$.
With $\tw{G_n}=n$ it follows that $n/2 \leq \ptw{\alpha}{G_n}$.
Moreover, an optimal tree-decomposition of $G_n$ can easily be observed to be of independence-number at most $\lceil n/2\rceil$, which completes the argument.

So, in general, it is necessary to bound both of these parameters, namely the rankwidth and the local independence number, in order to bound the $\alpha$-treewidth as well. 

On the other hand, for every~$n$ there is a graph of rankwidth~$n$, $\alpha$-treewidth~$2$, and local independence number~$3$, as we see below. 

As the main result of this section, we prove the following theorem.

\begin{theorem}
    \label{thm:rw-vs-alpha-tw2}
    Let~$\mathfrak{B}$ denote the set~${\{ \matchingKI, \matchingKK, \antimatchingKK, \halfgraphKK, \Star \}}$, 
    and let~$\mathcal{F} \subseteq \bigcup \mathfrak{B}$ be a set of graphs, each of which is contained in at least one~${\mathcal{X} \in \mathfrak{B}}$. 
    The parameters~$\maxP{\local{\alpha}{},\rankwidth{}}{}$ and~$\ptw{\alpha}{}$ are equivalent on the class of $\mathcal{F}$-free graphs if and only if
    \begin{itemize}
        \item $\mathcal{F}$ contains~${\matchingKI_2 \cong P_4}$ and~$\Star_n$ for some~${n \in \N}$, or
        \item $\mathcal{F}$ contains at least one graph from each~${\mathcal{X} \in \mathfrak{B}}$. 
    \end{itemize}
\end{theorem}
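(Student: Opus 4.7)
The proof is an ``if and only if'', which I treat in two directions.

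\textbf{Sufficiency.}
If $\mathcal{F}$ contains both $P_4$ and some $\Star_n$, then every $\mathcal{F}$-free graph $G$ is a cograph with $\local{\alpha}{G} \leq n-1$. Cographs are distance-hereditary, hence have rankwidth at most~$1$. Induction on the cotree gives $\ptw{\alpha}{G} \leq \local{\alpha}{G}$: in a disjoint union both parameters take the maximum over components; in a join $G_1 \ast G_2$, every vertex of $G_1$ dominates $V(G_2)$, so $\alpha(G_i) \leq \local{\alpha}{G}$ for $i \in \{1,2\}$ and hence $\alpha(G) \leq \local{\alpha}{G}$, letting a single bag suffice. So both parameters are uniformly bounded on this class and are trivially equivalent. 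If instead $\mathcal{F}$ contains at least one graph from each family in $\mathfrak{B}$, then by \cref{cor:parameterequivalence2} the class is clique-sparse, so $\local{\alpha}$ and $\reduced{\Delta}$ are bounded on it. The direction ``bounded $\maxP{\local{\alpha}, \rankwidth{}}{}$ implies bounded $\ptw{\alpha}{}$'' is exactly \cref{thm:rw-vs-alpha-tw1}. For the converse, contracting a pair of true twins leaves the rows of the adjacency matrix unchanged up to repetition, giving $\rankwidth{G} = \rankwidth{\widetilde{G}}$; combining this with the standard inequality $\rankwidth{H} \leq \tw{H} + 1$ and \cref{lemma:width-comparison},
\begin{equation*}
    \rankwidth{G} = \rankwidth{\widetilde{G}} \leq \tw{\widetilde{G}} + 1 \leq \ptw{\alpha}{G} \cdot \reduced{\Delta}{G} + 1,
\end{equation*}
which is bounded in $\ptw{\alpha}{G}$ because $\reduced{\Delta}{G}$ is bounded on the class.

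\textbf{Necessity.}
I argue by contrapositive, producing an $\mathcal{F}$-free family witnessing non-equivalence whenever both (a) and (b) fail. If $\mathcal{F}$ contains no star, then the stars $\Star_n$ themselves are $\mathcal{F}$-free (no induced copy of $\matchingKI_m$, $\matchingKK_m$, $\antimatchingKK_m$ or $\halfgraphKK_m$ for $m \geq 2$ occurs in a tree), and they satisfy $\ptw{\alpha}{\Star_n} = \rankwidth{\Star_n} = 1$ but $\local{\alpha}{\Star_n} = n$. Otherwise some $\Star_m \in \mathcal{F}$, which forces $P_4 \notin \mathcal{F}$ (else (a) would hold), while the failure of (b) supplies a family $\mathcal{X} \in \mathfrak{B} \setminus \{\Star\}$ entirely absent from $\mathcal{F}$. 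In this case I construct, for every $n$, an $\mathcal{F}$-free graph $H_n$ obtained from a constant number of cliques joined by highly structured ``$\mathcal{X}$-like'' patterns, satisfying $\alpha(H_n) \leq c_{\mathcal{X}}$ (so that $\ptw{\alpha}{H_n}$ and $\local{\alpha}{H_n}$ are bounded by constants) while $\rankwidth{H_n} \geq n$; this breaks the equivalence.

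\textbf{Main obstacle.}
The core difficulty is the construction of $H_n$ in the second subcase of necessity. Since \cref{thm:coupledpairs} tells us that any graph of large rankwidth necessarily contains a large matching, anti-matching, or half-graph pattern between appropriate vertex sets, the between-clique structure of $H_n$ must be tuned so as to produce precisely the pattern of the missing family $\mathcal{X}$ (thereby forcing the rankwidth up), while simultaneously avoiding the patterns corresponding to the families of $\mathfrak{B}$ that $\mathcal{F}$ does hit (keeping $H_n$ free of every graph in $\mathcal{F}$). Each of the four possibilities for $\mathcal{X}$ requires its own construction and separate verification.
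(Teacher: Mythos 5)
Your sufficiency argument is essentially correct and takes the same high-level route as the paper, with two small but pleasant variations: you derive $\ptw{\alpha}{G}\leq\local{\alpha}{G}$ for cographs directly by cotree induction (the paper cites an external theorem here, but your argument is sound — in a join $G_1\ast G_2$, any vertex of $G_{3-i}$ dominates $V(G_i)$, so $\alpha(G_i)\leq\local{\alpha}{G}$ and a single-bag decomposition suffices), and for the converse of the clique-sparse case you bound $\rankwidth{G}$ via $\tw{\widetilde G}\leq\ptw{\alpha}{G}\cdot\reduced{\Delta}{G}$ (\cref{lemma:width-comparison}), which is a cleaner linear bound than the Ramsey-type bound the paper invokes in \cref{thm:rw-vs-alpha-tw3}. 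You do hand-wave the claim $\rankwidth{G}=\rankwidth{\widetilde G}$; this is true (the paper constructs the rank-decomposition explicitly by attaching twins as siblings) except when $\widetilde G$ is edgeless but $G$ is not, a case absorbed by the additive $+1$.

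The necessity direction is where there is a genuine gap. Your first subcase (no star in $\mathcal{F}$, use $\Star_n$) is fine. But in the second subcase your sketch of the witnessing family $H_n$ — ``a constant number of cliques joined by highly structured $\mathcal{X}$-like patterns'' with $\alpha(H_n)\leq c_{\mathcal{X}}$ — cannot work: a graph built from a \emph{constant} number of cliques has independence number bounded by that constant, hence has bounded $\alpha$-treewidth, but it also has bounded rankwidth (indeed such graphs are in the substitution closure of finitely many parts and have bounded clique-width), so it would fail to break the equivalence. The paper's constructions $Q_n(\mathcal{X}_n)$ instead use a \emph{growing} number of cliques arranged in a path-like configuration; their independence number is $\Theta(n)$, and it is the $\alpha$-treewidth and local independence number that stay bounded. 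More importantly, you identify the difficulty (forcing rankwidth up while staying $\mathcal{F}$-free) but you do not have the tool the paper actually uses: lower-bounding the rankwidth of these graphs is not a consequence of \cref{thm:coupledpairs} (which goes in the opposite direction — extracting patterns from large cutrank), but of the path-constellation lemma of Geelen, Kwon, McCarty, and Wollan (\cref{lem:pathconstellations}), which asserts that $Q_{f(n)}(\matchingII_{f(n)})$ and $Q_{f(n)}(\halfgraphII_{f(n)})$ have rankwidth at least $n$. The paper reduces each of the four candidate families $\mathcal{X}\in\mathfrak{B}\setminus\{\Star\}$ to one of those two base cases by explicit local complementations and deletions (\cref{lem:Q_n(X)-2}), and separately checks $\alpha$-treewidth $\leq 2$ and local independence $\leq 3$ (\cref{lem:Q_n(X)-1}). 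Without these constructions and without the vertex-minor rankwidth lower bound, your necessity argument does not close.

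A final small remark: you correctly observe that the star subcase does not require $P_4\notin\mathcal{F}$, which is a marginal simplification over the paper's blanket hypothesis in that paragraph.
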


We prove this theorem in two steps. 
First, we analyse the relationship between rankwidth and $\alpha$-treewidth on graphs of bounded~$\maxP{\local{\alpha}{},\reduced{\omega}{}}{}$. 

\begin{theorem}
    \label{thm:rw-vs-alpha-tw3}
    Let~$d$ be a positive integer and let~$\mathcal{G}$ be a hereditary graph class such that~${\maxP{\local{\alpha}{},\reduced{\omega}{}}{G} \leq d}$ for every~${G \in \mathcal{G}}$. 
    Then the parameters~$\ptw{\alpha{}}{}$, $\rankwidth{}$ and~$\reduced{\tw{}}{}$ are equivalent on~$\mathcal{G}$. 
\end{theorem}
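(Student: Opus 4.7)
The entire argument reduces to working with the clique-quotient graph $\widetilde{G}$. By \cref{lem:parameterinequalities-2} and the hypothesis, $\Delta(\widetilde{G}) = \reduced{\Delta}{G} \leq \local{\alpha}{G}^{\reduced{\omega}{G}} \leq d^d =: D$ for every $G \in \mathcal{G}$, so $\widetilde{G}$ has bounded maximum degree throughout the class. The equivalence of $\ptw{\alpha}{}$ and $\reduced{\tw{}}{}$ on $\mathcal{G}$ is then already immediate from \cref{lemma:width-comparison}: part~(2) gives $\ptw{\alpha}{G} \leq \reduced{\tw{}}{G}$, while part~(4) combined with the degree bound gives $\reduced{\tw{}}{G} \leq D \cdot \ptw{\alpha}{G}$. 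It remains to insert $\rankwidth{}$ into the chain.

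First I plan to show that $\rankwidth{G}$ and $\rankwidth{\widetilde{G}}$ agree up to an additive constant. One inequality is free since $\widetilde{G}$ is isomorphic to an induced subgraph of $G$ by \cref{lem:clique-quotient-graph}. For the converse I will lift a rank-decomposition $(T,\delta)$ of $\widetilde{G}$ of width $r$ to one of $G$ of width $\max\{r,1\}$ by replacing each leaf labeled $\widetilde{v}$ with a binary subtree whose leaves are bijected with $\widetilde{v}$. Two kinds of cuts arise in the expanded tree. Inside an inserted subtree the cut partitions some $\widetilde{v}$ into $S$ and $\widetilde{v} \setminus S$; the corresponding submatrix has all rows identical, because the members of $\widetilde{v}$ are pairwise true twins and every outside vertex is adjacent either to all of $\widetilde{v}$ or to none of it, so the cutrank is at most $1$. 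Along the original edges of $T$ the cut takes the form $\bigcup\widetilde{A}$ versus its complement, and by \cref{obs:cutrank} its cutrank equals that of the corresponding cut in $\widetilde{G}$, which is at most $r$.

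Second, I connect $\rankwidth{\widetilde{G}}$ and $\tw{\widetilde{G}} = \reduced{\tw{}}{G}$ using the bounded-degree hypothesis on $\widetilde{G}$. The bound $\rankwidth{\widetilde{G}} \leq \tw{\widetilde{G}}+1$ is classical. For the converse I mimic the construction in the proof of \cref{thm:rw-vs-alpha-tw1}: given a rank-decomposition of $\widetilde{G}$ of width $r$, form a tree-decomposition whose bag at each internal node $t$ consists of those vertices of $X_t^1 \cup X_t^2 \cup X_t^3$ that have a neighbor in another part. If $N$ such vertices lie in $X_t^i$, then in the bipartite graph between $X_t^i$ and $X_t^j \cup X_t^k$ a greedy matching (each matched vertex on the far side blocks at most $D$ candidates via max degree $D$) yields a matching of size at least $N/(D+1)$, whose diagonal submatrix forces $r \geq N/(D+1)$. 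Hence each bag has size at most $3(D+1)r$, giving $\tw{\widetilde{G}} \leq 3(D+1)\rankwidth{\widetilde{G}}$. Chaining this with the twin-expansion step closes the equivalence loop.

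\textbf{Main obstacle.} The technically most delicate part is the twin-expansion argument for rankwidth: one must carefully verify that cuts inside the inserted clique-of-true-twins subtrees collapse to rank at most $1$ and that cuts along original edges are preserved exactly. Both reduce to the linear-algebraic observation that duplicating rows or columns of a matrix over $\F_2$ does not change its rank---the algebraic avatar of the true-twin relation. Once this step is in place, the remaining inequalities are routine and chain together via the bounded-degree structure of $\widetilde{G}$.
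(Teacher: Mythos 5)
Your proof is correct and reaches the same conclusions, but the route through the ``hard'' direction differs from the paper's. The twin-expansion argument showing $\rankwidth{G} \leq \max\{1,\rankwidth{\widetilde G}\}$ is essentially identical to what the paper does (replacing leaves by subcubic trees and using \cref{obs:cutrank} on the original edges, plus the constant-rank observation on the inserted edges). Where you diverge is in closing the loop: the paper goes from $\ptw{\alpha}{G}=\ptw{\alpha}{\widetilde G}$ to $\reduced{\tw{}}{G}=\tw{\widetilde G}$ via Ramsey's theorem, using the bound $\omega(\widetilde G)=\reduced{\omega}{G}\leq d$ to control bag sizes of an $\alpha$-optimal tree-decomposition, and then invokes \cref{thm:rw-vs-alpha-tw1} wholesale to relate $\rankwidth{}$ to $\ptw{\alpha}{}$. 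You instead exploit $\Delta(\widetilde G)\leq d^d$ directly: \cref{lemma:width-comparison}(4) gives $\reduced{\tw{}}{G}\leq\ptw{\alpha}{G}\cdot\reduced{\Delta}{G}$ with no Ramsey blowup, and you re-run the bag-construction from the proof of \cref{thm:rw-vs-alpha-tw1} on $\widetilde G$ to bound $\tw{\widetilde G}$ by $\mathcal{O}(D\cdot\rankwidth{\widetilde G})$ rather than going through $\alpha$-width. Your route avoids the Ramsey number and keeps all bounds polynomial in $D$, which is cleaner quantitatively; the paper's route reuses an already-proved theorem at the cost of a Ramsey-type dependency. One small place to tighten: ``a matching forces $r\geq N/(D+1)$'' is not literally true, since a matching alone need not yield a full-rank submatrix. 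You should phrase the greedy step so that the pairs $(a_i,b_i)$ are chosen in an order with $a_i b_j\notin E$ for $i>j$ (discard neighbours of $b_i$ from the remaining candidate pool on the $X_t^i$ side, which costs at most $D$ per step), making the resulting submatrix triangular with ones on the diagonal; this is precisely what the paper's minimal dominating set $B$ achieves in \cref{thm:rw-vs-alpha-tw1}.
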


\begin{proof}
    Clearly, $\ptw{\alpha}{G} \leq \tw{G}$ for every graph~$G$. 
    Oum~\cite{Oum2008-rw-bw} showed that~${\rankwidth{G} \leq \tw{G}+1}$ for every graph~$G$.  
    From \cref{lemma:width-comparison}, we know that~$\ptw{\alpha}{G} = \ptw{\alpha}{\widetilde{G}}$ for every graph~$G$. 
    Similarly, we claim that~${\rankwidth{G} = \rankwidth{\widetilde{G}}}$, unless $\widetilde{G}$ is edgeless but~$G$ is not (in which case ${\rankwidth{G} = 1}$ but ${\rankwidth{\widetilde{G}} = 0}$).
    Indeed, clearly~${\rankwidth{\widetilde{G}} \leq \rankwidth{G}}$. 
    For the other direction, consider a rank-decomposition~$(T,\delta)$ of~$\widetilde{G}$ of width~$\rankwidth{\widetilde{G}}$. 
    For a set~${X \subseteq V(G)}$, let~$T_{{X}}$ denote a subcubic tree with exactly~$\abs{X}$ leaves and exactly one node of degree~$2$. 
    For every leaf~$t$ of~$T$ with~$\delta(t) = \widetilde{v}$, we extend the rank-decomposition by identifying the degree-$2$ vertex of~$T_{\widetilde{v}}$ with~$t$ and label the new leaves with an arbitrary bijection to~$\widetilde{v}$. 
    Let~$(T',\delta')$ denote the resulting rank-decomposition of~$G$. 
    For every edge of~$T'$ that corresponds to an edge of~$T_{\widetilde{v}}$ for some~${\widetilde{v} \in V(\widetilde{G})}$, we observe that one component~${T-e}$ is mapped by~$\delta'$ to a set of true twins, and hence the cutrank of~$e$ is at most~$1$. 
    For every edge~$e'$ of~$T'$ that corresponds to an edge~$e$ of~$T$, if~$\widetilde{X}$ and~$\widetilde{Y}$ denotes the images of the bipartition of the leaves of~$T$ induced by~$e$ under $\delta$, then the image of the bipartition of the leaves of~$T'$ induced by~$e'$ under $\delta'$ are~$\bigcup \widetilde{X}$ and~$\bigcup \widetilde{Y}$. 
    Hence, the cutrank of~$e$ in~$(T,\delta)$ is equal to the cutrank of~$e'$ in~$(T',\delta')$ by \cref{obs:cutrank}, and the rankwidth of~$G$ is at most~${\max\{1,\rankwidth{\widetilde{G}}\}}$. 
    So in total, we conclude that $\ptw{\alpha}{G} \leq \reduced{\tw{}}{G}$ and~${\rankwidth{G} \leq \reduced{\tw{}}{G} + 1}$. 

    Now consider a graph~${G \in \mathcal{G}}$ and set~${w \coloneqq \ptw{\alpha}{G}}$. 
    Let~$(T,\beta)$ be a tree-decomposition of~$\widetilde{G}$ of $\alpha$-width~$w$. 
    Since~${\reduced{\omega}{\widetilde{G}[\beta(t)]} \leq \reduced{\omega}{G} \leq d}$ for every node~${t \in V(T)}$, by Ramsey's theorem we have that~$\abs{\beta(t)} \leq R(\max\{d,w\})$. 
    Hence, $\reduced{\tw{}}{G} \leq R(\max\{d,w\})$, as desired. 

    Now consider a graph~${G \in \mathcal{G}}$ and set~${k \coloneqq \rankwidth{G}}$. 
    By \cref{thm:rw-vs-alpha-tw1}, we have $\ptw{\alpha}{G} \leq 3dk$, and hence, by the deduction above, ${\reduced{\tw{}}{G} \leq R(3dk)}$. 
\end{proof}

For the second part of \cref{thm:rw-vs-alpha-tw2}, we define the following graphs. 
Let~${n \geq 1}$ and~${k \geq 2}$ be integers, let~$X^1 = \{ x_1^1, \dots x_n^1 \}, \dots, X^k = \{ x_1^k, \dots x_n^k \}$ be pairwise disjoint ordered sets, each of size~$n$, and let~$\{ y^1, \dots, y^k \}$ and~$\{ z^1, \dots, z^{k-1} \}$ be sets of size~$k$ and~$k-1$, respectively, that are disjoint to each other and to each set~$X^j$. 
We define the following graphs, see also \cref{fig:iteratedHedgehogs}. 
\begin{itemize}
    \item $Q_k(\matchingII_n) = \bigcup_{i \in [k-1]} \mathscr{M}(X^i, X^{i+1}) \cup \bigcup_{i \in [k]} K(\{y^i\}, X^i)$
    \item $Q_k(\matchingKK_n) = \bigcup_{i \in [k-1]} \mathscr{M}(X^i, X^{i+1}) \cup \bigcup_{i \in [k]} K(\{y^i\} \cup X^i)$
        \item $Q_k(\matchingKI_n) = \bigcup_{i \in [k-1]} \mathscr{M}(X^i, X^{i+1}) \cup \bigcup_{i \in [ \lceil k/2 \rceil ]} K(\{y^{2i-1}\} \cup X^{2i-1})$
    \item $Q_k(\halfgraphII_n) = \bigcup_{i \in [k-1]} \mathscr{H}(X^i, X^{i+1}) \cup \bigcup_{i \in [k]} K(\{y^i\}, X^i)$
    \item $Q_k(\halfgraphKK_n) = \bigcup_{i \in [k-1]} \mathscr{H}(X^i, X^{i+1}) \cup \bigcup_{i \in [k]} K(\{y^i\} \cup X^i)$
    \item $Q_k(\antimatchingKK_n) = \bigcup_{i \in [k-1]} \mathscr{A}(X^i, X^{i+1}) \cup \bigcup_{i \in [k]} K(\{y^i\} \cup X^i) \cup \bigcup_{i \in [k-1]} K(\{z^i\}, X^i \cup X^{i+1})$
\end{itemize}

\begin{figure}[!ht]
    \centering
    \includegraphics[scale=0.5]{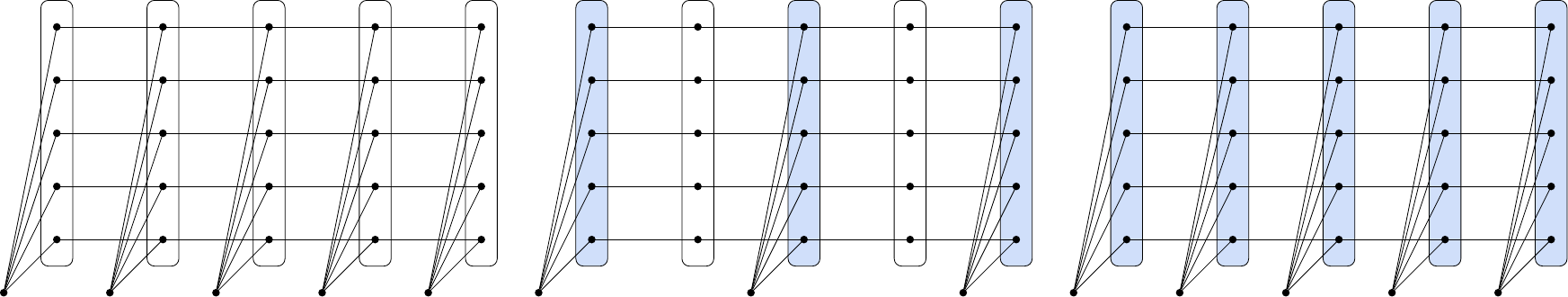}
    
    \includegraphics[scale=0.5]{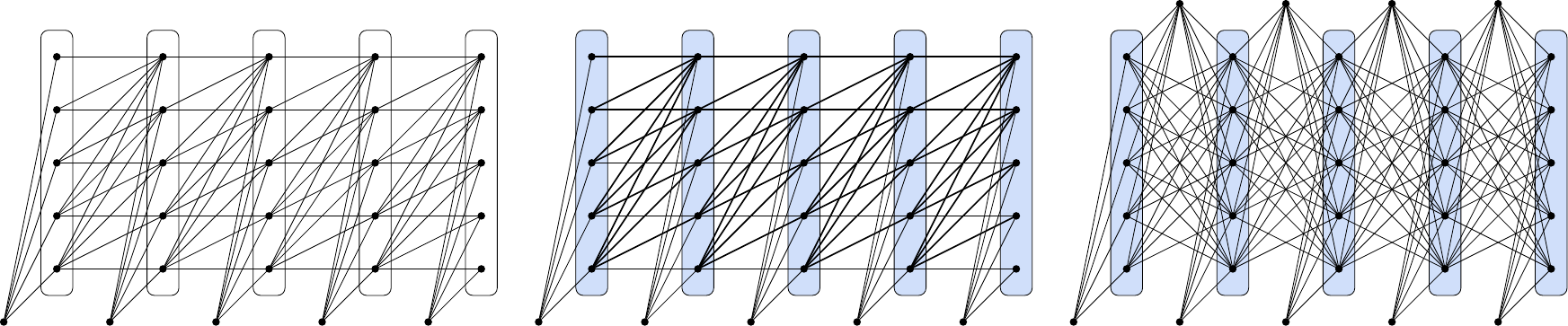}
    \caption{Top row: $Q_5(\matchingII_5)$, $Q_5(\matchingKI_5)$, and~$Q_5(\matchingKK_5)$. \\
        Bottom row: $Q_5(\halfgraphII_5)$, $Q_5(\halfgraphKK_5)$, and~$Q_5(\antimatchingKK_5)$. }
    \label{fig:iteratedHedgehogs}
\end{figure}

\begin{lemma}
    \label{lem:Q_n(X)-1}
    For each integer~${n \geq 3}$ and each~${\mathcal{X} \in \{\matchingKI, \matchingKK, \antimatchingKK, \halfgraphKK \}}$, we have ${\ptw{\alpha}{Q_n(\mathcal{X}_n)} \leq 2}$ and~${\local{\alpha}{Q_n(\mathcal{X}_n)} \leq 3}$. 
\end{lemma}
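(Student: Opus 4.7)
The plan is to verify both inequalities by a direct, case-by-case analysis over the four choices of $\mathcal{X}$. For the local independence bound, I would exploit \cref{lem:parameterinequalities-1} and prove the stronger statement $\local{\theta}{Q_n(\mathcal{X}_n)} \leq 3$ by exhibiting, for every vertex $v$, a cover of $N[v]$ by at most three cliques. When $v = y^i$, the set $N[v] = \{y^i\} \cup X^i$ is itself a clique; when $v = z^i$ (only for $\antimatchingKK$), $N[v] = \{z^i\} \cup X^i \cup X^{i+1}$ decomposes into the two cliques $\{z^i\} \cup X^i$ and $\{z^i\} \cup X^{i+1}$; when $v = x_j^i$, the closed neighbourhood splits into at most one clique from each of the layers $X^{i-1}$, $X^i$, and $X^{i+1}$, absorbing $y^i$ into the layer-$i$ clique and, for $\antimatchingKK$, the apices $z^{i-1}$ and $z^i$ into the layer-$(i-1)$ and layer-$(i+1)$ cliques respectively, using that $z^i$ is adjacent to every vertex of $X^i \cup X^{i+1}$.

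For the $\alpha$-treewidth bound, I would construct an explicit tree-decomposition in each case. For $\mathcal{X} \in \{\matchingKK, \halfgraphKK\}$, take the path of bags $B_i = X^i \cup X^{i+1} \cup \{y^i, y^{i+1}\}$ for $i \in [n-1]$: each $B_i$ induces the disjoint union of two cliques $X^i \cup \{y^i\}$ and $X^{i+1} \cup \{y^{i+1}\}$ joined by a matching (respectively, a half-graph), and choosing a non-matched representative from each clique gives $\alpha(G[B_i]) = 2$. For $\mathcal{X} = \antimatchingKK$, I would alternate along a path $A_1, B_1, A_2, B_2, \ldots, A_n$ where $A_i = X^i \cup \{y^i\}$ is a clique (so $\alpha = 1$) and $B_i = X^i \cup X^{i+1} \cup \{z^i\}$. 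Since $z^i$ is adjacent to every other vertex of $B_i$, an independent set in $B_i$ either equals $\{z^i\}$ or consists of one vertex from each clique $X^i$ and $X^{i+1}$ that must be anti-matched, i.e.\ of the form $\{x_j^i, x_j^{i+1}\}$, yielding $\alpha(G[B_i]) = 2$. Verifying that every edge lies in some bag and every vertex occupies a connected subtree of the decomposition is routine in both cases.

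The main obstacle is the $\matchingKI$ case: the odd layers $X^{2i-1} \cup \{y^{2i-1}\}$ are large cliques that must each sit in a single bag, but the even layers are independent sets of size $n$, so a naive bag of the form $X^{2i-1} \cup X^{2i}$ would have $\alpha \geq n$. Instead, I use central bags $C_\ell = X^{2\ell-1} \cup \{y^{2\ell-1}\}$ for $\ell \in [\lceil n/2 \rceil]$ arranged along a path, each consecutive pair joined through a glue bag $G_\ell = X^{2\ell-1} \cup X^{2\ell+1}$, and for each such $\ell$ and each $j \in [n]$ a leaf bag $D_{\ell, j} = \{x_j^{2\ell-1}, x_j^{2\ell}, x_j^{2\ell+1}\}$ attached to $G_\ell$ (together with analogous leaves $\{x_j^{n-1}, x_j^n\}$ attached to $C_{n/2}$ when $n$ is even). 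Since no edge of $Q_n(\matchingKI_n)$ joins $X^{2\ell-1}$ to $X^{2\ell+1}$ directly, $G_\ell$ induces the disjoint union of two cliques, giving $\alpha(G_\ell) = 2$; each $D_{\ell, j}$ induces a path on three vertices, giving $\alpha = 2$; and each $C_\ell$ is a clique. Contiguity of the subtree of bags containing any $x_j^{2\ell-1}$ follows because these bags form the spine piece $G_{\ell-1} - C_\ell - G_\ell$ together with the two leaves $D_{\ell-1, j}$ and $D_{\ell, j}$, while each $x_j^{2\ell}$ appears only in the single bag $D_{\ell, j}$.
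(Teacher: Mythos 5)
Your proof is correct and, at a high level, follows the same plan as the paper: an explicit clique cover of each closed neighbourhood to get $\local{\theta}{} \leq 3$ (hence $\local{\alpha}{} \leq 3$ by \cref{lem:parameterinequalities-1}), and explicit caterpillar-/path-shaped tree-decompositions to get $\ptw{\alpha}{} \leq 2$. Your $\local{\theta}{}$ analysis spells out the paper's one-line ``each neighbourhood is covered by three cliques,'' and your decompositions for $\matchingKK$ and $\halfgraphKK$ coincide with the paper's. For $\matchingKI$ your caterpillar with alternating spine bags $C_\ell = X^{2\ell-1}\cup\{y^{2\ell-1}\}$ and $G_\ell = X^{2\ell-1}\cup X^{2\ell+1}$ plus leaf bags $N[x_j^{2\ell}]$ is essentially the paper's construction (modulo an apparent indexing typo there), just with the spine split into more nodes.

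The one place where your route genuinely diverges is $\antimatchingKK$, and there it actually patches a gap in the paper. The paper takes bags $\beta(t_i) = X^i \cup X^{i+1} \cup \{y^i,y^{i+1},z^i\}$ and asserts each has independence number at most~$2$ because each $X^i\cup\{y^i\}$ is a clique; but $z^i$ is adjacent only to $X^i\cup X^{i+1}$, so $\{y^i, y^{i+1}, z^i\}$ is an independent set of size $3$ inside that bag. As written, the paper's decomposition only certifies $\ptw{\alpha}{Q_n(\antimatchingKK_n)} \leq 3$. Your alternating decomposition $A_1, B_1, A_2, \ldots, A_n$ with $A_i = X^i\cup\{y^i\}$ (a clique) and $B_i = X^i\cup X^{i+1}\cup\{z^i\}$ separates the $y^i$'s from the $z^i$'s, and your analysis of $B_i$ (either $\{z^i\}$ alone, or an anti-matched pair $\{x_j^i, x_j^{i+1}\}$) is correct, giving the claimed $\alpha$-width~$2$. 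This is also what is needed to justify the paper's parenthetical remark that $\ptw{\alpha}{Q_n(\antimatchingKK_n)} = 2$; the paper's own bags do not do so. So: same strategy overall, a cleaner and strictly correct handling of the $\antimatchingKK$ case.
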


\begin{proof}
    For each positive integers~$m$ and $m'$, let~${P_{m}}$ denote the path with~$m$ vertices~${t_1, \dots t_{m}}$ and edges~$t_i t_{i+1}$ for each~${i \in [m-1]}$, and let~$T_{m,m'}$ denote the tree obtained from~$P_m$ by, for each~${i \in m}$, attaching~$m'$ leaves~${t_i^1, \dots, t_i^{m'}}$ to~$t_i$. 
    
    For~${\mathcal{X} \in \{ \matchingKK, \antimatchingKK, \halfgraphKK \}}$, we do the following. 
    We define a path-decomposition~$(P_{n-1}, \beta)$ of~$Q_n(\mathcal{X}_n)$ by, for all~${i \in [n-1]}$, setting ${\beta(t_i) \coloneqq X^i \cup X^{i+1} \cup \{y^i, y^{i+1}, z^i\}}$ if~${\mathcal{X} =\antimatchingKK}$, and setting ${\beta(t_i) \coloneqq X^i \cup X^{i+1} \cup \{y^i, y^{i+1}\}}$, otherwise. 
    It is easy to see that~$(P_{n-1},\beta)$ is indeed a path-decomposition of~${Q_n(\mathcal{X}_n)}$. 
    And since each~$X^i \cup \{y^i\}$ induces a clique of~${Q_n(\mathcal{X}_n)}$, the independence number of each bag is at most~$2$. 
    (Indeed, $Q_n(\matchingKK_n)$ and $Q_n(\antimatchingKK_n)$ contain $C_4$ as an induced subgraph, hence are not chordal, and therefore have $\alpha$-treewidth equal to~$2$, and $Q_n(\halfgraphKK_n)$ is chordal and hence has~$\alpha$-treewidth equal to~$1$.) 
    
    We define a tree-decomposition~$(T_{\lfloor n/2 \rfloor, n}, \beta)$ of~$Q_n(\matchingKI_n)$ by setting ${\beta(t_i) = X^i \cup X^{i+2} \cup \{ y^i, y^{i+2} \}}$ for all positive integers~${i}$ with~${2i - 1 \in [n-1]}$, and setting~$\beta(t_i^j) = N_{Q_n(\matchingKI_n)}[x_j^{i+1}]$ for all positive integers~${i}$ with~${2i - 1 \in [n-1]}$ and all integers~${j \in [n]}$. 
    Again, it is easy to see that~$(T_{\lfloor n/2 \rfloor, n}, \beta)$ is a tree-decomposition of independence number~$2$. 
    (And since $Q_n(\matchingKI_n)$ contains~$C_6$ as an induced subgraph, it is not chordal and hence has $\alpha$-treewidth equal to~$2$.)

    Lastly, it is straight forward to observe that the neighbourhood of each vertex can be covered by at most three cliques, hence~${\local{\alpha}{Q_n(\mathcal{X}_n)} \leq 3}$ for each~${\mathcal{X}_n \in \{\matchingKI_n, \matchingKK_n, \antimatchingKK_n, \halfgraphKK_n \}}$. 
\end{proof}

To analyse the rankwidth of these graphs, we make use of the grid theorem for vertex-minors~\cite{GeelenKMW2023-vertexminorgridtheorem}. 
To state this, we first need to define vertex-minors. 

Let~$G$ be a graph and let~${v \in V(G)}$. 
We define the operation~${G \ast v}$ of \emph{local complementation at~$v$} as the graph where we complement the subgraph induced by the open neighbourhood of~$v$. 
A \emph{vertex-minor} of~$G$ is a graph obtained from~$G$ by a sequence of vertex-deletions and local completions. 
It is well-known that rankwidth is monotone under the vertex-minor containment relation~\cite{Oum2005-rw-vm}. 

\begin{lemma}[{\cite[Lemma 7.8]{GeelenKMW2023-vertexminorgridtheorem}}]
    \label{lem:pathconstellations}
    There exists a function~$f$ such that for any positive integer~$n$, 
    the graphs~$Q_{f(n)}(\matchingII_{f(n)})$ and~$Q_{f(n)}(\halfgraphII_{f(n)})$ have rankwidth at least~$n$. 
\end{lemma}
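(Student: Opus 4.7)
The plan is to bound the rankwidth of both graphs from below by exhibiting, as vertex-minors, structures whose rankwidth is already known to grow with $n$. Since vertex-minor containment is closed under vertex deletion and local complementation, and rankwidth is monotone under this relation, it suffices to perform a short sequence of such operations and land on a graph of known large rankwidth. The canonical target is the $m \times m$ grid, whose rankwidth is $\Theta(m)$ by a classical result of Jelínek. Once we pick $f$ large enough to absorb the constants in each reduction, the conclusion follows.

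For $Q_{k}(\matchingII_{n})$ the argument is immediate: deleting the apex vertices $y^{1}, \dots, y^{k}$ leaves exactly the $k \times n$ grid, with the sets $X^{1}, \dots, X^{k}$ as rows and the matchings $\mathscr{M}(X^{i}, X^{i+1})$ as the vertical edges. Taking $k = n$ and using the lower bound $\operatorname{rw}(\text{$n\times n$ grid}) \geq n-1$, it is enough to choose $f$ so that $f(n) \geq n+1$ for this case.

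For $Q_{k}(\halfgraphII_{n})$ the plan is to turn each half-graph layer into a matching layer by local complementation, reducing the problem to the first case. Concretely, I would process the levels $i = 1, \dots, k-1$ one at a time: within the half-graph between $X^{i}$ and $X^{i+1}$, apply local complementations at a suitably chosen subset of vertices of $X^{i}$ (processed in the order dictated by the half-graph) so that, after discarding a constant fraction of the vertices in $X^{i}$ and $X^{i+1}$, the remaining bipartite graph between the survivors is a matching. One has to verify that this local operation does not destroy the matching/half-graph structure of neighbouring layers too badly; the cleanest way to handle this is to work with disjoint "blocks" of indices layer-by-layer, using the apex $y^{i}$ as a single dominating witness that localises the effect of each complementation. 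After iterating over all $k-1$ layers one obtains $Q_{k}(\matchingII_{n'})$ as a vertex-minor, with $n' \geq n / c$ for some absolute constant $c$; then the matching case concludes the argument.

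The main obstacle is the bookkeeping in the half-graph case: tracking precisely how local complementation at a vertex of $X^{i}$ affects the bipartite adjacency with both $X^{i-1}$ and $X^{i+1}$, and arranging the complementations so that the damage does not propagate across layers. Ordering the operations carefully, or, as an alternative, using an intermediate reduction to a comparability grid (also known to have rankwidth $\Theta(n)$) and then proceeding directly without passing through the matching case, should make these interactions manageable; either way, absorbing the constant-factor losses into the definition of $f$ yields the desired function.
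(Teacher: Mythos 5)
This lemma is not proved in the paper at all: it is quoted verbatim from Geelen, Kwon, McCarty and Wollan (Lemma~7.8 of their vertex-minor grid theorem paper), where the actual argument exhibits a large comparability grid as a vertex-minor and is substantially involved. Your attempt at a self-contained proof fails at its very first step. In $Q_{k}(\matchingII_{n})$ the sets $X^{1},\dots,X^{k}$ are independent sets, and the only edges besides the apex stars $K(\{y^{i}\},X^{i})$ are the matching edges $x_{j}^{i}x_{j}^{i+1}$. Hence deleting $y^{1},\dots,y^{k}$ does \emph{not} leave a $k\times n$ grid: there are no ``horizontal'' edges inside any level, so what remains is a disjoint union of $n$ paths, a graph of rankwidth~$1$. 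The apex vertices are precisely what makes these graphs have large rankwidth, so no reduction that begins by discarding them can work, and the base case of your plan (to which the half-graph case is then reduced) collapses. This is not a repairable bookkeeping issue; it is the reason the statement requires the comparability-grid machinery of Geelen et al.\ rather than a two-line vertex-minor reduction to the ordinary grid.

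The half-graph half of your sketch has a second, independent problem even granting a correct matching case: local complementation at a vertex of $X^{i}$ complements the subgraph induced on its neighbourhood, which lies in $X^{i-1}\cup X^{i+1}\cup\{y^{i}\}$, so each such operation creates edges inside $X^{i-1}$ and $X^{i+1}$, edges between those two levels, and edges to the apex. You acknowledge this interaction but leave it entirely unresolved (``suitably chosen subset'', ``damage does not propagate''), so even as a sketch it does not constitute a proof. If you want a proof of this lemma rather than a citation, the honest route is the one the cited authors take: show directly that $Q_{f(n)}(\matchingII_{f(n)})$ and $Q_{f(n)}(\halfgraphII_{f(n)})$ contain the $n\times n$ comparability grid as a vertex-minor and use the known lower bound on the rankwidth of comparability grids, together with the monotonicity of $\rankwidth{}$ under vertex-minors.
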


What Geelen, Kwon, McCarty, and Wollan actually show is that these graphs contain the ${n \times n}$-comparability grid\footnote{The \emph{$n \times n$-comparability grid} is the graph with vertex set~$[n]^2$ where~$(i,j)$ and~$(i',j')$ are adjacent if and only if either~${i \leq i'}$ and~${j \leq j'}$ or~${i \geq i'}$ and~${j \geq j'}$. } as a vertex-minor.
This lemma is an essential tool to prove their celebrated Vertex-Minor Grid Theorem, which states that every graph with sufficiently large rankwidth has an ${n \times n}$-comparability grid as a vertex-minor. 
We do not need the full strength of this fundamental result of the structural theory of vertex-minors, but \cref{lem:pathconstellations} suffices for our purposes.
Using this lemma, we are able to argue that each of~$Q_n(\matchingKI_n)$, $Q_n(\matchingKK_n)$, $Q_n(\antimatchingKK_n)$, and $Q_n(\halfgraphKK_n)$ have large rankwidth as we are able to find $Q_{n'}(\matchingII_{n'})$ or~$Q_{n'}(\halfgraphII_{n'})$ for some sufficiently large~$n'$ as a vertex-minor. 

\begin{lemma}
    \label{lem:Q_n(X)-2}
    There exists a function~$f$ such that for every positive integer~${n}$ and each~${\mathcal{X} \in \{\matchingKI, \matchingKK, \antimatchingKK, \halfgraphKK \}}$, we have ${\rankwidth{Q_{f(n)}(\mathcal{X}_{f(n)})} \geq n}$. 
\end{lemma}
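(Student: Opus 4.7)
The plan is to invoke \cref{lem:pathconstellations} together with the fact that rankwidth is monotone under vertex-minors~\cite{Oum2005-rw-vm}: for each $\mathcal{X} \in \{\matchingKI, \matchingKK, \antimatchingKK, \halfgraphKK\}$ I would exhibit a short sequence of local complementations and vertex deletions that turns $Q_k(\mathcal{X}_n)$ into a copy of $Q_{k'}(\matchingII_{n'})$ or $Q_{k'}(\halfgraphII_{n'})$ with $k'$, $n'$ linear in $k$ and $n$. Letting $g$ denote the function from \cref{lem:pathconstellations}, setting $f(n) := 2g(n) + 2$ then yields the claim uniformly.

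The cases $\matchingKK$ and $\halfgraphKK$ are immediate. In both $Q_k(\matchingKK_n)$ and $Q_k(\halfgraphKK_n)$, the open neighbourhood of $y^i$ is exactly the clique $X^i$, so locally complementing at each $y^i$ flips every $X^i$ from a clique to an independent set without changing any other edge. This produces $Q_k(\matchingII_n)$ and $Q_k(\halfgraphII_n)$ on the nose.

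For $\matchingKI$, I would first apply the same trick at each $y^{2i-1}$ to turn the odd-indexed cliques $X^{2i-1}$ into independent sets. Each vertex $x_j^{2i}$ in an (already independent) even-indexed set then has exactly the two non-adjacent neighbours $x_j^{2i-1}$ and $x_j^{2i+1}$; locally complementing at $x_j^{2i}$ inserts the edge $x_j^{2i-1} x_j^{2i+1}$, after which $x_j^{2i}$ can be deleted. These operations commute, so performing them for all $j$ and all even $i$ contracts the two-step matchings into direct matchings between $X^{2i-1}$ and $X^{2i+1}$, leaving a copy of $Q_{\lceil k/2 \rceil}(\matchingII_n)$.

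For $\antimatchingKK$ I would use the bridging vertices $z^i$: since $N(z^i) = X^i \cup X^{i+1}$, a local complementation at $z^i$ flips the edges inside $X^i$, inside $X^{i+1}$, and the bipartite edges between them (turning the anti-matching into a matching). Processing $z^1, \dots, z^{k-1}$ in order, every interior set $X^i$ with $2 \le i \le k-1$ gets flipped twice (by $z^{i-1}$ and $z^i$) and is thus restored to a clique, while $X^1$ and $X^k$ are flipped once and become independent; each anti-matching is flipped exactly once and becomes a matching. Since $y^j$ and every $z^j$ with $j \ne i$ lie outside $N(z^i)$, their adjacencies are preserved throughout. Deleting $X^1$, $X^k$, the resulting isolated $y^1, y^k$, and all $z^i$'s leaves a copy of $Q_{k-2}(\matchingKK_n)$, which we reduce to $Q_{k-2}(\matchingII_n)$ as in the first case. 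The main obstacle I anticipate is the bookkeeping here: tracking how many times each set and each matching is flipped, and verifying that no cross-interference occurs between different complementations; this ultimately boils down to the observations that $y^j, z^j \notin N(z^i)$ for all $j$ and that $N(z^i) \cap N(z^{i+1}) = X^{i+1}$, giving a clean parity count.
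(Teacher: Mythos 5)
Your proposal is correct, and it follows essentially the same strategy as the paper: for each $\mathcal{X}$, exhibit a sequence of local complementations and vertex deletions turning $Q_k(\mathcal{X}_n)$ into $Q_{k'}(\matchingII_{n'})$ or $Q_{k'}(\halfgraphII_{n'})$, then combine \cref{lem:pathconstellations} with vertex-minor monotonicity of rankwidth. The reductions for $\matchingKK$ and $\halfgraphKK$ (local complementation at all $y^i$) and for $\matchingKI$ (complement at the odd $y^{2i-1}$, then suppress the degree-$2$ vertices in the even layers) coincide with the paper's, up to immaterial bookkeeping of the resulting index $k'$.

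The one place where your argument is actually \emph{more careful} than the paper's is the $\antimatchingKK$ case. The paper states that successively locally complementing at $z^1,\dots,z^{m-1}$ and then deleting those vertices yields $Q_m(\matchingII_m)$ on the nose. As you correctly observe via the parity count, this is not what happens: the boundary sets $X^1$ and $X^m$ are each complemented exactly once and become independent, but every interior $X^i$ with $2 \le i \le m-1$ is complemented twice (by $z^{i-1}$ and $z^i$) and is restored to a clique, so the resulting graph is not $Q_m(\matchingII_m)$. Your additional step --- deleting $X^1$, $X^k$, the now-isolated $y^1,y^k$, and all $z^i$, arriving at $Q_{k-2}(\matchingKK_n)$, and then applying the $\matchingKK$ reduction --- repairs this and gives a complete, correct argument. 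This is a genuine, if minor, gap in the paper's write-up that your version closes.
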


\begin{proof}
    For each integer~${m \geq 3}$, observe that the graph obtained from~$Q_{m}(\matchingKK_m)$ by successively locally complementing at~$y^1, \dots, y^m$ is equal to~$Q_{m}(\matchingII_m)$. 
    Similarly, for each integer~${m \geq 3}$, that the graph obtained from~$Q_{m}(\halfgraphKK_m)$ by successively locally complementing at~${y^1, \dots, y^m}$ is equal to~$Q_{m}(\halfgraphII_m)$. 
    For each integer~${m \geq 3}$, consider the graph obtained from~$Q_{m}(\matchingKI_m)$ by successively locally complementing at~$y^1, y^3, \dots$. 
    Now, we locally complement at each vertex of degree~$2$ and delete it afterwards; this corresponds to suppressing each degree-$2$ vertex. 
    Lastly, we delete all degree-$1$ vertices. 
    The resulting graph is isomorphic to~$Q_{\lfloor m/2 \rfloor - 1}(\matchingII_m)$. 

    Finally, for each integer~${m \geq 3}$, observe that the graph obtained from~$Q_{m}(\antimatchingKK_m)$ by successively locally complementing at~$z^1, \dots, z^{m-1}$ and then deleting~$\{z^1, \dots, z^{m-1}\}$ is equal to~$Q_{m}(\matchingII_m)$. 
\end{proof}

Finally, we can conclude with the proof of \cref{thm:rw-vs-alpha-tw2}. 

\begin{proof}[Proof of \cref{thm:rw-vs-alpha-tw2}.]
    Let~$\mathcal{G}$ denote the class of $\mathcal{F}$-free graphs. 
    
    First suppose that~$\mathcal{F}$ contains a graph from each~${\mathcal{X} \in \mathfrak{B}}$. 
    Then the parameter $\maxP{\local{\alpha}{},\reduced{\omega}{}}{}$ is bounded on~$\mathcal{G}$ by \cref{cor:parameterequivalence2}. 
    Hence, by \cref{thm:rw-vs-alpha-tw3}, $\rankwidth{}$ and $\ptw{\alpha}{}$ are equivalent on~$\mathcal{G}$, and since~$\local{\alpha}{}$ is bounded on~$\mathcal{G}$, we have that~$\maxP{\local{\alpha}{}, \rankwidth{}}{}$ and~$\ptw{\alpha}{}$ are equivalent on~$\mathcal{G}$. 
    
    Now suppose that~${P_4 \cong \matchingKI_2 \in \mathcal{F}}$ and~$\Star_n \in \mathcal{F}$. 
    Then, in particular, each graph in~$\mathcal{G}$ is a cograph. 
    Now, cographs have rankwidth at most~$1$ \cite{Oum2005-rw-vm} and the $\alpha$-treewidth of~$\Star_n$-free cographs is less than~$n$ \cite[Theorem 1.7]{DallardKKMMW2024}. 
    Hence, both~$\maxP{\local{\alpha}{}, \rankwidth{}}{}$ and~$\ptw{\alpha}{}$ are bounded on~$\mathcal{G}$, and hence are equivalent.  

    Now suppose that~$\mathcal{F}$ does not contain~$\mathcal{X}_n$ for some~${\mathcal{X} \in \mathfrak{B}}$ and all~${n \in \N}$, and does not contain~${\matchingKI_2 \cong P_4}$. 
    Hence, $\mathcal{X}_n \in \mathcal{G}$. 
    If~${\mathcal{X} = \Star}$, then note that since~$\ptw{\alpha}{\Star_n} = 1$ and~${\local{\alpha}{\Star_n} = n}$, we conclude that~$\maxP{\local{\alpha}{},\rankwidth{}}{}$ and~$\ptw{\alpha}{}$ cannot be equivalent on~$\mathcal{G}$. 
    If, on the other hand, $\mathcal{X} \in \mathfrak{B} \setminus \{\Star\}$, 
    then~$\mathcal{F}$ contains only graphs of the form~$\mathcal{Y}_m$ for~$\mathcal{Y} \in \mathfrak{B} \setminus \{\mathcal{X}\}$ and~${m \in \N}$. 
    Since~$\matchingKI_1 = \matchingKK_1 = \antimatchingKK_1 = \halfgraphKK_1$, none of these is contained in~$\mathcal{F}$. 
    Moreover, since~$\matchingKK_2 = \antimatchingKK_2$, if~${\mathcal{X} \in \{ \matchingKK, \antimatchingKK \}}$, then~${\matchingKK_2 = \antimatchingKK_2 \notin \mathcal{F}}$. 
    Now, for each~$\mathcal{Y} \in \mathfrak{B} \setminus \{\mathcal{X}\}$ it is easy to see that~$Q_n(\mathcal{X}_n)$ does not contain~$\mathcal{Y}_m$ as an induced subgraph for any~${m \geq 3}$. 
    And since~${\matchingKI_2 \notin \mathcal{F}}$, it then follows that~$Q_n(\mathcal{X}_n) \in \mathcal{G}$ for every~${n \in \N}$. 
    Now since, by \cref{lem:Q_n(X)-2}, the rankwidth of these graphs is unbounded but, by \cref{lem:Q_n(X)-1}, the $\alpha$-treewidth (and local independence number) is bounded, we conclude that~$\maxP{\local{\alpha}{},\rankwidth{}}{}$ and~$\ptw{\alpha}{}$ cannot be equivalent on~$\mathcal{G}$.
\end{proof}

\bibliographystyle{abbrv}
\bibliography{literature}

\end{document}